\newtheorem{theorem}{Theorem}[section]
\newtheorem*{theorem*}{Theorem}
\newtheorem{lemma}[theorem]{Lemma}
\newtheorem{proposition}[theorem]{Proposition}
\newtheorem{algorithm}[theorem]{Algorithm}
\newtheorem{definition}[theorem]{Definition}
\newtheorem{preremark}[theorem]{Remark}
	\newenvironment{remark}%
		{\begin{preremark}\rm}{\end{preremark}}
\newtheorem{preexample}[theorem]{Example}
\def\f{F}
\def\Z{\mathbb{Z}}
\def\Q{\mathbb{Q}}
\def\F{\mathbb{F}}
\def\R{\mathbb{R}}
\def\N{\mathbb{N}}
\def\m{\mathfrak{M}}
\def\MM{\textsf{M}}
\def\M{{M}}
\def\ch{\mathrm{char}}
\newcommand{\OK}{\mathcal{O}_K}
\newcommand{\OH}{\mathcal{O}_H}
\renewcommand{\O}{\mathcal{O}}
\newcommand{\p}{\mathfrak{p}}
\newcommand{\q}{\mathfrak{q}}
\def\a{\mathfrak{a}}
\def\b{\mathfrak{b}}
\newcommand{\norm}{\operatorname{N}}
\newcommand{\disc}{\operatorname{disc}}
\newcommand{\End}{\operatorname{End}}
\newcommand{\Aut}{\operatorname{Aut}}
\newcommand{\Gal}{\operatorname{Gal}}
\newcommand{\ord}{\operatorname{ord}}
\newcommand{\frakp}{\mathfrak{p}}
\title[Using CM elliptic curves for deterministic primality proving]
{A framework for deterministic primality proving using elliptic curves with complex multiplication}
\author[A. Abatzoglou]{Alexander Abatzoglou}
\address{Department of Mathematics, University of California, Irvine, CA 92697}
\email{aabatzog@math.uci.edu}
\author[A. Silverberg]{Alice Silverberg}
\address{Department of Mathematics, University of California, Irvine, CA 92697}
\email{asilverb@math.uci.edu}
\author[A.V. Sutherland]{\\Andrew V.\ Sutherland}
\address{Department of Mathematics, MIT, Cambridge, MA 02139}
\email{drew@math.mit.edu}
\author[A. Wong]{Angela Wong}
\address{Department of Mathematics, University of California, Irvine, CA 92697}
\email{awong@math.uci.edu}
\thanks{This work was supported by the National Science Foundation under grants CNS-0831004 and DMS-1115455.}
\begin{document}

\begin{abstract}
We provide a framework for using elliptic curves with complex multiplication to
determine the primality or compositeness of integers that lie in special sequences,
in deterministic quasi-quadratic time.  We use this to find large primes, including the largest prime currently known whose primality cannot feasibly be proved using classical methods.
\end{abstract}

\maketitle

\section{Introduction}
\label{intro}

The problem of determining whether a given integer is prime or composite is a fundamental problem of computational number theory, with a history that dates back to Gauss, and even Eratosthenes.
The most significant modern result is that of Agrawal, Kayal, and Saxena \cite{aks04}, who gave the first deterministic polynomial-time algorithm for determining primality.
As refined by Lenstra and Pomerance \cite{lenstrapomerance11}, the running time of this algorithm is $\tilde{O}(n^6)$, where $n$ is the number of bits in the binary representation of the given integer and the $\tilde{O}$ notation ignores factors that are polynomial in $\log n$.
This is the best general result currently known (for deterministic algorithms), but for numbers of certain special forms, such as Mersenne numbers and Fermat numbers, one can do better; it is possible to determine the primality or compositeness of these and other special types of numbers in $\tilde{O}(n^2)$ time.
It is these much faster algorithms that have made it possible to prove the primality of numbers like $2^{57,885,161}-1$, which is the largest prime currently known.
 
Pomerance has shown that every prime number $p$, not just those of a special form, admits a short proof of its primality, one that can be verified in $\tilde{O}(n^2)$ time~\cite{pomerance87}.
This proof is known as a \emph{Pomerance proof}, and it takes the form of a point on an elliptic curve.
We also note that every composite number $p$ admits a short proof of its compositeness; its factorization is one, and there are others that can be efficiently found using a probabilistic algorithm --- this is the basis of the Miller-Rabin test for compositeness \cite{miller76, rabin80}.
Thus in principle, every integer admits a short proof of its primality or compositeness, one that can be verified in $\tilde{O}(n^2)$ time, but at present we do not have an efficient way of finding such a proof, except in special cases.
Indeed, no polynomial-time algorithm for finding a Pomerance proof for a given prime is known, not even a probabilistic one.

In this paper we consider a framework for determining the primality of integers that lie in certain special sequences, using elliptic curves with complex multiplication (CM).
The running time of these algorithms is $\tilde{O}(n^2)$, and in the case that the given integer is prime, they produce a Pomerance proof of its primality.
We are particularly interested in cases where classical methods that do not use elliptic curves are inapplicable; such methods require   a partial factorization of either $N+1$ or $N-1$, where $N$ is the integer being tested for primality \cite[Ch. 4]{crandallpomerance}.

Goldwasser and Kilian gave a general primality proving algorithm using randomly generated elliptic curves in \cite{Goldwasser-Kilian86,goldwasserkilian99}. Atkin and Morain improved their algorithm by using elliptic curves constructed via the CM method, rather than random elliptic curves \cite{atkinmorain93}.  With further asymptotic improvements due to Shallit, the Atkin-Morain algorithm has a heuristic expected running time of $\tilde{O}(n^4)$  \cite{morain07}, and is currently the most widely used method for general purpose primality proving.
The time to verify the certificate produced by an elliptic curve
primality proof is $\tilde{O}(n^3)$. 
We also note a general purpose compositeness test of Gordon \cite{gordon1989} that uses supersingular reductions of CM elliptic curves over~$\Q$, and a proposal by Mihailescu \cite{Mihailescu07,FKDG} for a general purpose primality proving algorithm that uses a combination of elliptic curve and cyclotomy primality proving methods.

During the mid 1980s, Bosma in \cite{bosma85} and Chudnovsky and Chudnovsky in \cite{chudnovsky86} proposed primality testing algorithms for numbers of special forms 
using elliptic curves with CM.  Bosma proposed primality tests based on elliptic curves with CM by $\Q(i)$ and $\Q(\sqrt{-3})$.  Chudnovsky and Chudnovsky proposed a general method for proving the primality of numbers in certain sequences based on
algebraic varieties, including CM elliptic curves, but their approach is not guaranteed to succeed on every prime input (so it does not prove compositeness).

In \cite{gross04}, Gross used an elliptic curve with CM by $\Q(i)$ to obtain an efficient primality proving algorithm for Mersenne numbers.
Gross's algorithm runs in $\tilde{O}(n^2)$ time, but the same is true of the classical Lucas-Lehmer test \cite{pomerance10}, which does not use elliptic curves, and the constant factors hidden in the $\tilde{O}$-notation favor the latter.
Denomme and Savin \cite{denommesavin08} and Tsumura \cite{tsumura11}, obtained similar results for other sequences of numbers, but in both cases there are faster classical methods that can be applied to these sequences.   In \cite{Gurevich12}, Gurevich and Kunyavski\u{i} gave deterministic primality tests using algebraic tori and elliptic curves.
Here we present a general framework that encompasses and extends the results of \cite{assw,denommesavin08,gross04,tsumura11,AngelaThesis} to arbitrary elliptic curves with CM; this framework is developed in \S \ref{framework} below.

In \cite{assw} we used an elliptic curve with CM by $\Q(\sqrt{-7})$ to obtain an $\tilde{O}(n^2)$ deterministic primality-proving algorithm for a sequence of numbers to which classical methods do not apply.
With this algorithm we were able to prove the primality of a number that, at the time it was found, was the largest proven prime $N$ for which no significant partial factorization of $N-1$ or $N+1$ is known (see also \cite{AngelaThesis}).
Here we use an elliptic curve with CM by $\Q(\sqrt{-15})$, which has class number 2, to obtain an $\tilde{O}(n^2)$ deterministic primality proving algorithm for a new sequence of integers to which classical methods do not apply; this is presented in \S\ref{example-15}.
For elliptic curves with CM by fields of class numbers one and two, these two examples are essentially the only such results possible using our techniques, as explained in \S\ref{splitsect}.

With this new algorithm, we are able to prove the primality of a 1,392,250-bit integer, a number with some 419,110 decimal digits.
As of this writing, this is the largest prime whose primality cannot feasibly be proved using classical methods.

The organization of this paper as follows.
We begin in \S\ref{lemmasect} by  proving a number of lemmas that are used in later sections.
In \S\ref{framework} we state and prove our main theoretical results, and give general algorithms.
In \S\ref{splitsect} we explain why our algorithms in \cite{assw} and
\S\ref{example-15} below are essentially the only ``interesting'' cases that use
elliptic curves with CM by fields of class numbers one or two.
In \S \ref{example-2} we briefly discuss a primality test that uses 
an elliptic curve with CM by $\Q(\sqrt{-2})$; while the sequence of numbers to which it applies can also be tested using classical methods, it represents a case not previously addressed with elliptic curves.
Finally, in \S \ref{example-15} we present our new primality test using an elliptic curve with CM by $\Q(\sqrt{-15})$ and present computational results.

\smallskip 

\noindent{\bf{Acknowledgments:}} 
We thank Daniel J.~Bernstein, Robert Denomme, Fran\c{c}ois Morain, 
Carl Pomerance, and Karl Rubin for helpful conversations,
and the organizers of ECC 2010, the First Abel Conference, and 
the AWM Anniversary Conference 
where useful discussions took place.
We thank the referee for helpful comments.

\section{Lemmas}
\label{lemmasect}

\begin{definition}
Suppose  
$E$ is an elliptic curve over a number field $M$ 
and $J$ is an ideal of $\O_M$ that is prime to $\disc(E)$. 
We say that $P\in E(M)$ is 
{\bf strongly nonzero} mod $J$ if 
one can express $P = (x:y:z) \in E(\O_M)$ in such a way that the ideal
generated by $z$ and $J$ is $\O_M$ (i.e.,  $(z)$ and $J$ are
relatively prime).
We say that $P$ is {\bf nonzero} mod $J$, and write $P \not\equiv O_E$ mod~$J$, if 
one can express $P = (x:y:z) \in E(\O_M)$ in such a way that 
$z\not\in J$; otherwise we say $P$ is zero mod $J$ and write $P \equiv O_E$ mod~$J$.
\end{definition}
\smallskip

\begin{remark}~\\\vspace{-12pt}
\begin{enumerate}
\item
If $P \equiv O_E$ mod~$J$, then $P \equiv O_E$ mod~$\lambda$ for every prime
ideal $\lambda$ that divides $J$ in $\O_M$.
\item 
The point $P$ is strongly nonzero mod $J$ if and only if 
$P \not\equiv O_E$  mod $\lambda$ for every prime ideal
$\lambda$ that divides $J$ in $\O_M$.
\item In particular,
if $J$ is prime, then $P$ is strongly nonzero mod $J$ if and only if $P \not\equiv O_E$ mod~$J$.
\end{enumerate}
\end{remark}

The next result is a direct generalization of Theorem 2 of
\cite{LenstraICM}.

\begin{theorem}
\label{Lenstra}
Suppose $E$ is an elliptic curve over a number field $M$ with CM by the 
ring of integers in an imaginary quadratic field $K$. 
Suppose ${J}$ is an ideal of $\O_M$ relatively prime to $\text{disc(E)}$.
Suppose $Q \in E(M)$, $\Lambda \in \O_K$,
$\Lambda Q \equiv O_E$ mod ${J}$, and for every prime ideal $\lambda$ of $\OK$
that divides $\Lambda$ there exists a point 
in $\frac{(\Lambda)}{\lambda} Q$ that is strongly nonzero mod ${J}$.
Let $r = \norm_{M/\Q}({J})$, and suppose $\norm_{K/\Q}(\Lambda) > \left(r^{1/4} + 1\right)^2$.  
Then the ideal ${J}$ is prime.
\end{theorem}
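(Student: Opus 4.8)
The plan is to argue by contradiction, reducing the criterion to Hasse's bound exactly as in Lenstra's original elliptic curve argument, but tracking the $\OK$-module structure of the reduced point rather than the cyclic order of a point over a finite field. So I would suppose that $J$ is a proper ideal that is \emph{not} prime. Since $\O_M$ is a Dedekind domain, $J$ then factors into prime ideals with total multiplicity at least two, and hence has a prime divisor $\mathfrak{P}$ with $\norm_{M/\Q}(\mathfrak{P}) \le r^{1/2}$. Because $\mathfrak{P}\mid J$ and $J$ is prime to $\disc(E)$, the curve $E$ has good reduction at $\mathfrak{P}$; write $\tilde E$ for its reduction, an elliptic curve over the residue field $k = \O_M/\mathfrak{P}$, a finite field of order $q := \norm_{M/\Q}(\mathfrak{P}) \le r^{1/2}$. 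Since the CM is defined over $M$, reduction carries the $M$-rational action of $\OK$ to a $k$-rational action of $\OK$ on $\tilde E$, so $\OK$ acts on the finite group $\tilde E(k)$.

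Next I would reduce the point and the hypotheses modulo $\mathfrak{P}$. Let $\tilde Q\in\tilde E(k)$ be the reduction of $Q$. By Remark~(i), $\Lambda Q\equiv O_E \pmod{J}$ gives $\Lambda\tilde Q = O_E$ in $\tilde E(k)$, so $(\Lambda)\subseteq\operatorname{Ann}_{\OK}(\tilde Q)$. For the reverse containment I would use the strong-nonzero hypothesis together with Remark~(ii): for each prime $\lambda\mid\Lambda$ there is some $\alpha\in(\Lambda)/\lambda$ with $\alpha Q$ strongly nonzero mod $J$, hence $\alpha\tilde Q\ne O_E$, so the integral ideal $(\Lambda)/\lambda$ does not annihilate $\tilde Q$.

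The crux is then the ideal-theoretic step showing that these conditions force $\operatorname{Ann}_{\OK}(\tilde Q)=(\Lambda)$ exactly. If the annihilator $\mathfrak{a}$ were strictly larger than $(\Lambda)$, then (to contain is to divide) $\mathfrak{a}\mid(\Lambda)$ with a proper cofactor, so some prime $\lambda\mid(\Lambda)/\mathfrak{a}$; but then $(\Lambda)/\lambda=\mathfrak{a}\cdot\bigl((\Lambda)/(\lambda\mathfrak{a})\bigr)\subseteq\mathfrak{a}$ would annihilate $\tilde Q$, contradicting the previous paragraph. Consequently the $\OK$-submodule $\OK\tilde Q\cong\OK/(\Lambda)$ lies inside $\tilde E(k)$ and has order $|\OK/(\Lambda)|=\norm_{K/\Q}(\Lambda)$.

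Finally I would invoke Hasse's theorem: $\#\tilde E(k)\le (q^{1/2}+1)^2\le (r^{1/4}+1)^2$. Combining this with the previous step yields
$$\norm_{K/\Q}(\Lambda)=\#(\OK\tilde Q)\le \#\tilde E(k)\le (r^{1/4}+1)^2,$$
which contradicts the hypothesis $\norm_{K/\Q}(\Lambda)>(r^{1/4}+1)^2$; hence $J$ is prime. I expect the main obstacle to be the middle portion: making precise that the $\OK$-action reduces faithfully and $k$-rationally (so that $\OK\tilde Q$ is genuinely a $k$-rational subgroup of the stated order), and then converting the ``strongly nonzero'' conditions into the exact identity $\operatorname{Ann}_{\OK}(\tilde Q)=(\Lambda)$. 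This is precisely where the CM module structure replaces the single integer ``order'' of the classical Pocklington--Lenstra argument, and where the bookkeeping with the integral ideals $(\Lambda)/\lambda$ must be carried out with care.
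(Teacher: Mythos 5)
Your proposal is correct and takes essentially the same route as the paper's proof: assume $J$ is not prime, extract a prime divisor $\q \mid J$ with $\norm_{M/\Q}(\q) \le \sqrt{r}$, show the annihilator of $Q$ mod $\q$ is exactly $(\Lambda)$ so that the $\OK$-module it generates is $\OK/(\Lambda)$ of order $\norm_{K/\Q}(\Lambda)$, and contradict the Hasse bound. The only difference is one of detail: you spell out the ideal-theoretic step (a strictly larger annihilator $\a$ would force $(\Lambda)\lambda^{-1}\subseteq\a$ for some prime $\lambda\mid(\Lambda)$, contradicting the strongly-nonzero hypothesis via Remark (ii)) that the paper compresses into the single assertion ``the annihilator of $Q$ mod $\q$ is $(\Lambda)$.''
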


\begin{proof}
If $J$ is not prime then there is a prime ideal $\q$ of $\O_M$ that divides $J$ 
and satisfies $q:=\norm_{M/\Q}(\q) \le \sqrt{r}$. 
The annihilator of $Q$ mod $\q$ is $(\Lambda)$, so 
the $\OK$-module generated by $Q$ mod $\q$ is isomorphic to $\OK/(\Lambda)$.
By the Hasse bound,
$$(\sqrt{q} + 1)^2 \ge |E(\O_M/\q) | \geq \norm_{K /\Q}(\Lambda)  
> \left(r^{1/4} + 1\right)^2 \ge (\sqrt{q} + 1)^2.$$
This contradiction implies that $J$ is prime.
\end{proof}

The next result will be used in the proof of Theorem \ref{mainthm2}.

\begin{lemma}
\label{Mxmodule}
Suppose $K$ is an imaginary quadratic field, and $\a$ is an ideal of $\O_K$
such that $\a$ is not divisible in $\O_K$ by any rational prime that
splits in $K$. Define $L\in\Z^+$ by $L\Z=\a\cap\Z$.
Suppose $\m$ is a finite $\O_K$-module, and $x$ is an element of $\m$ of order $L$.
Then
$$
\#(\O_K\cdot x) \ge {\frac{N_{K/\Q}(\a)}{N_{K/\Q}(\prod_\lambda \lambda)}},
$$
where $\lambda$ runs over the prime ideals of $\O_K$ that divide
$\a$ and are ramified in $K/\Q$.
\end{lemma}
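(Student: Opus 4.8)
The plan is to reduce the statement to a lower bound for the norm of a single ideal, and then to establish that bound by a prime-by-prime comparison in which the hypothesis on $\a$ is used exactly where it is needed. First I would introduce the annihilator $I=\{\alpha\in\O_K:\alpha x=0\}$, an ideal of $\O_K$. The map $\alpha\mapsto\alpha x$ induces an isomorphism of $\O_K$-modules $\O_K/I\isom\O_K\cdot x$, so $\#(\O_K\cdot x)=\#(\O_K/I)=N_{K/\Q}(I)$, and $I\neq 0$ because $\m$ is finite. Since $x$ has additive order $L$, an integer $n$ lies in $I$ if and only if $L\mid n$, so $I\cap\Z=L\Z=\a\cap\Z$. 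Thus $I$ and $\a$ have the same contraction to $\Z$, and the problem reduces to bounding $N_{K/\Q}(I)$ from below knowing only $I\cap\Z=L\Z$, and then comparing the result with $N_{K/\Q}(\a)$.

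Because both $N_{K/\Q}$ and the condition of having contraction $L\Z$ are determined independently at each rational prime, I would work locally at each prime $\ell$, writing $m_\ell=v_\ell(L)$ and splitting into cases by the behavior of $\ell$ in $K$. If $\ell$ splits as $\lambda\bar\lambda$, then $v_\ell(I\cap\Z)=\max(v_\lambda(I),v_{\bar\lambda}(I))$, so $m_\ell=\max(v_\lambda(I),v_{\bar\lambda}(I))$ and the local factor $\ell^{v_\lambda(I)+v_{\bar\lambda}(I)}$ of $N_{K/\Q}(I)$ is at least $\ell^{m_\ell}$, with equality exactly when one of the two exponents is $0$. If $\ell$ is inert the local factor is forced to equal $\ell^{2m_\ell}$. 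If $\ell$ ramifies as $\lambda^2$, then $v_\ell(I\cap\Z)=\lceil v_\lambda(I)/2\rceil$, so (for $m_\ell\geq 1$) the constraint permits $v_\lambda(I)\in\{2m_\ell-1,\,2m_\ell\}$ and the local factor $\ell^{v_\lambda(I)}$ is minimized at $\ell^{2m_\ell-1}$. Taking the product over $\ell$ yields the least possible value $N_{\min}$ of $N_{K/\Q}(I)$ compatible with $I\cap\Z=L\Z$, and in particular $N_{K/\Q}(I)\geq N_{\min}$.

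Finally I would run the same local analysis for $\a$ and compare it termwise with $N_{\min}$. At a split prime the hypothesis that no split rational prime divides $\a$ forces one of $v_\lambda(\a),v_{\bar\lambda}(\a)$ to vanish, so the split local factors of $N_{K/\Q}(\a)$ attain the minimum $\ell^{m_\ell}$; at an inert prime both equal $\ell^{2m_\ell}$; and at a ramified prime $\lambda^2$ dividing $\a$ the ideal $\a$ carries exponent $2m_\ell-1$ or $2m_\ell$, exceeding the minimal $\ell^{2m_\ell-1}$ by at most the single factor $N_{K/\Q}(\lambda)=\ell$. Multiplying these comparisons gives $N_{K/\Q}(\a)/N_{\min}\leq\prod_\lambda N_{K/\Q}(\lambda)=N_{K/\Q}(\prod_\lambda\lambda)$, the product running over the ramified $\lambda\mid\a$, whence $N_{K/\Q}(I)\geq N_{\min}\geq N_{K/\Q}(\a)/N_{K/\Q}(\prod_\lambda\lambda)$, as claimed. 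The step demanding the most care is the ramified case: one must handle the ceiling in $\lceil v_\lambda(I)/2\rceil$ correctly and observe that ramified primes are the only places where $\a$ can exceed the minimal norm, which is precisely what the correction factor $\prod_\lambda\lambda$ records; the split hypothesis is what rules out an analogous, and a priori unbounded, excess at the split primes.
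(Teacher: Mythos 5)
Your proposal is correct, and it reaches the bound by a genuinely different organization than the paper's. The paper first proves the inequality for prime-power ideals $\a=\lambda^n$ by analyzing the structure of the cyclic $\O_K$-module $\O_K\cdot x$ in each splitting case (e.g., $\Z/p^a\Z\times\Z/p^b\Z$ with $\max\{a,b\}=n$ when $p$ splits), and then treats general $\a$ by decomposing $\O_K/\b$ (with $\b$ the annihilator of $x$) into its $p$-parts, verifying $L=\prod_p L_p$ and matching $L_p=M_p$ for the contractions of $\b$ and $\a$, with the split-prime hypothesis invoked to make each $\a_p$ a prime power so the first case applies. You instead pass immediately to the identity $\#(\O_K\cdot x)=N_{K/\Q}(I)$ for the annihilator $I$, which converts the whole lemma into pure ideal arithmetic: a valuation-by-valuation determination of the minimal norm $N_{\min}$ among ideals with contraction $L\Z$, followed by a termwise comparison of $N_{K/\Q}(\a)$ against $N_{\min}$. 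This buys a shorter argument that dispenses with both the module structure theory and the CRT bookkeeping, and it makes the role of each hypothesis transparent: the split-prime condition is used only to pin the split local factors of $N_{K/\Q}(\a)$ at the minimum $\ell^{m_\ell}$ (without it the ratio $N_{K/\Q}(\a)/N_{K/\Q}(I)$ could be as large as $\ell^{m_\ell}$ at each split $\ell$, hence unbounded, exactly as you observe), while the ramified correction $\prod_\lambda N_{K/\Q}(\lambda)$ records precisely the one-unit slack $v_\lambda\in\{2m_\ell-1,\,2m_\ell\}$ coming from the ceiling. The underlying local computations --- the maximum at split primes, the doubling at inert primes, the ceiling at ramified primes --- are the same in both proofs; the paper's version has the mild advantage of exhibiting the actual module isomorphism types along the way, which your norm-only argument deliberately avoids.
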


\begin{proof}
Let $\O=\O_K$.
First suppose $\a = \lambda^n$ with $\lambda$ a prime ideal of $\O$.
Let $p$ be the rational prime below $\lambda$.

If $p=\lambda\bar{\lambda}$ splits in $K$, then $L=p^n$,
and the cyclic $\O$-module $\O\cdot x$ is isomorphic to
$\Z/p^a\Z \times \Z/p^b\Z$, with $\max\{a,b\}=n$.
Then
$$
\#(\O\cdot x) =p^{a+b} \ge p^n = N_{K/\Q}(\a)
$$
as desired.

If $p=\lambda$ is inert, then $L=p^n$
and $\O\cdot x \simeq \O/\lambda^n \simeq \O/(p^n)$, so
$$
\#(\O\cdot x) =\#(\O/(p^{n})) = p^{2n} = N_{K/\Q}(\a).
$$

If $p=\lambda^2$ ramifies, then $\a\cap\Z=\lambda^n\cap\Z
=p^{\lceil n/2 \rceil}\Z$ so
$L=p^{\lceil n/2 \rceil}$.
The cyclic $\O$-module $\O\cdot x$ is isomorphic to
$\O/\lambda^a$, where $1$ has order $L=p^{a/2}$ if $a$ is even
and order $L=p^{(a+1)/2}$ if $a$ is odd.
Thus $\#(\O\cdot x) =\#(\O/\lambda^a) =p^{a}$ is $L^2$ if $a$ is even
and $L^2/p$ otherwise. 
Thus
$$
\#(\O\cdot x) \ge L^2/p \ge p^{n}/p = N_{K/\Q}(\a)/N_{K/\Q}(\lambda)
$$
as desired.

Let us now consider an ideal $\a$ satisfying the hypothesis of the lemma.
Let $\b$ be the annihilator of $x$ in $\O$.
We have $\O\cdot x \simeq \O/\b \simeq \oplus_{\lambda}\O/\lambda^{n_\lambda}$,
with a finite direct sum with $\lambda$'s distinct prime ideals of~$\O$.
Define $L_p\in\Z^+$ by
$$
L_p\Z = \bigl(\prod_{\lambda\mid p} \lambda^{n_\lambda}\bigr)\cap\Z
 = (\cap_{\lambda\mid p} \lambda^{n_\lambda})\cap\Z.
 $$
Then $L_p$ is a power of $p$, so the $L_p$'s are relatively prime.

We now show that $L = \prod_p L_p$.
Since $L$ is the order of $x$ and $\b$ is the annihilator of $x$, we have $$
L\Z = \b\cap\Z
= (\bigcap_p\prod_{\lambda\mid p} \lambda^{n_\lambda})\cap\Z
= \bigcap_p(\prod_{\lambda\mid p} \lambda^{n_\lambda}\cap\Z)
= \bigcap_p L_p\Z
= (\prod_{p} L_p)\Z.
$$
Write $\a=\prod_\lambda \lambda^{m_\lambda}$, let
$\a_p=\prod_{\lambda\mid p} \lambda^{m_\lambda}$, and define
$M_p\in\Z^+$ by $M_p\Z = \a_p\cap\Z$.
We claim that $L_p=M_p$, as follows.
Since $L\Z = \a\cap\Z$, the proof above, with $\b$ replaced by $\a$
and $n_\lambda$ replaced by $m_\lambda$, shows that $L=\prod_p M_p$.
Thus, both $M_p$ and $L_p$ are the $p$-part of $L$.

Now $\O/\b \simeq \bigoplus_{p}(\O/\prod_{\lambda\mid p}\lambda^{n_\lambda})$.
Since $\a$ is not divisible in $\O$ by any rational prime that
splits in $K$, we have
$\a_p=\prod_{\lambda\mid p} \lambda^{m_\lambda}
=\lambda^{m_\lambda}$, for some $\lambda\mid p$, so $\a_p$ is a prime power.
Thus (applying the prime power case to the element $1\in
\O/\prod_{\lambda\mid p}\lambda^{n_\lambda}$ which has order $L_p$),
$$
\#(\O\cdot x) = \#(\O/\b) =
\prod_{p}\#(\O/\prod_{\lambda\mid p}\lambda^{n_\lambda})
\ge 
\prod_{p}{\frac{N_{K/\Q}(\a_p)}{N_{K/\Q}(\prod_{\lambda\mid \a_p} \lambda)}}
={\frac{N_{K/\Q}(\a)}{N_{K/\Q}(\prod_{\lambda\mid \a} \lambda)}},
$$
where in the latter two terms $\lambda$ runs over prime ideals
that are ramified in $K/\Q$.
\end{proof}

The following lemma is completely elementary, and will be used
in the proofs of Theorems \ref{mainthm1} and \ref{mainthm2}.

\begin{lemma}
\label{assm5}
If $C, \f\in\R$, $C\ge 1$, and $\f > 16C^2$, 
then $C(\f^{1/4} + 1)^2 < (\sqrt{\f} - 1)^2.$
\end{lemma}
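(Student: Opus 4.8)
The plan is to reduce everything to a single positive real variable by substituting $t = \f^{1/4} > 0$, so that $\sqrt{\f} = t^2$. Under this substitution the hypothesis $\f > 16C^2$ reads $t^4 > 16C^2$; since $t > 0$ and $C \ge 1 > 0$, this is equivalent to $t^2 > 4C$, and then, taking positive square roots, to $t > 2\sqrt{C}$. The inequality to be proved, $C(\f^{1/4}+1)^2 < (\sqrt{\f}-1)^2$, likewise becomes $C(t+1)^2 < (t^2-1)^2$.

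Next I would factor the right-hand side as $(t^2-1)^2 = (t-1)^2(t+1)^2$. Because $t > 2\sqrt{C} \ge 2 > 1$, the factor $(t+1)^2$ is strictly positive, so dividing through by it reduces the claim to the much simpler inequality $C < (t-1)^2$.

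Finally I would deduce $C < (t-1)^2$ from $t > 2\sqrt{C}$ together with $C \ge 1$. The key observation is that $C \ge 1$ is exactly equivalent to $\sqrt{C} \ge 1$, and hence to $2\sqrt{C} \ge 1 + \sqrt{C}$. Combining this with $t > 2\sqrt{C}$ gives $t - 1 > \sqrt{C} > 0$; squaring the positive quantities then yields $(t-1)^2 > C$, which is what we wanted.

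I do not expect a genuine obstacle here, as the lemma is elementary; the only points requiring care are checking that every square root is taken of a nonnegative quantity and that all inequalities stay strict, both of which follow from $t > 2\sqrt{C} > 1$ and the strict hypothesis $\f > 16C^2$. The single small idea is recognizing that the assumption $C \ge 1$ is precisely what converts the bound $t > 2\sqrt{C}$ into $t - 1 > \sqrt{C}$.
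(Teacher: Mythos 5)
Your proof is correct, and it takes a genuinely different route from the paper's. The paper expands $(\f^{1/4}+1)^2 = \sqrt{\f} + 2\f^{1/4} + 1$ and chains estimates: from $\f > 16$ it deduces $2\f^{1/4} < \sqrt{\f}$, from $\f > 16C^2$ it deduces $2\sqrt{\f} < \frac{\f}{C} - 2\sqrt{\f}$, and $C \ge 1$ enters only at the last step, in absorbing $-2\sqrt{\f}+1 \le \frac{-(2\sqrt{\f}-1)}{C}$ so the whole bound can be written as $\frac{(\sqrt{\f}-1)^2}{C}$. You instead substitute $t = \f^{1/4}$, factor $(t^2-1)^2 = (t-1)^2(t+1)^2$, cancel the common positive factor $(t+1)^2$, and reduce the entire lemma to the one-line inequality $C < (t-1)^2$, which follows from $t > 2\sqrt{C} \ge 1 + \sqrt{C}$. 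Every step you take is valid: the equivalences $t^4 > 16C^2 \iff t^2 > 4C \iff t > 2\sqrt{C}$ hold because all quantities are positive, and strictness is preserved throughout. Your factorization buys something the paper's estimate chain obscures: it exposes the sharp threshold, since $C < (t-1)^2$ holds exactly when $t > 1 + \sqrt{C}$, i.e., when $\f > (1+\sqrt{C})^4$, a condition strictly weaker than $\f > 16C^2$ for all $C > 1$ (with equality precisely at $C = 1$). The paper's approach, by contrast, needs no algebraic insight and proceeds by brute estimation, which is arguably easier to verify line by line but proves nothing beyond the stated constant.
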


\begin{proof}
Since $16<\f$ we have $16\f < \f^2$, so $2\f^{1/4} < \sqrt{\f}$.
Since $\f > 16C^2$, we have $\sqrt{\f} > 4C$, so 
$2\sqrt{\f}  < \frac{\f}{C} - 2\sqrt{\f}$. Thus,
\begin{align*}
(\f^{1/4} + 1)^2 &=\sqrt{\f} + 2\f^{1/4} + 1 < 2\sqrt{\f} + 1\\
&<\frac{\f}{C} - 2\sqrt{\f} + 1 \le  \frac{\f}{C} - \frac{2\sqrt{\f} - 1}{C} = 
\frac{(\sqrt{\f} - 1)^2}{C}.\qedhere
\end{align*}
\end{proof}

\section{Main Theoretical Results and Proofs}
\label{framework}

\subsection{Notation and assumptions}
\label{assmptnsect}
Let $K$ be an imaginary quadratic field, let
$E$ be an elliptic curve over a number field $\M \supseteq K$ with CM by $\OK$, 
and let $P$ be a point in $E(M)$.
Let $\gamma, \alpha_1 , \ldots \alpha_s$ be nonzero elements of $\OK$, and for $k = (k_1,\ldots, k_s)\in\N^s$ define
$$
\Lambda_k = \gamma\alpha_1^{k_1}\cdots\alpha_s^{k_s}, \qquad \pi_k = 1 + \Lambda_k, \qquad F_k = \norm_{K / \Q}(\pi_k).
$$
We shall restrict our attention to $F_k > 16\norm_{K/\Q}(\gamma^2)$ such that
$\text{disc}(E)$ is relatively prime to $F_k$, which we assume henceforth (this is simply a restriction on $k$).
Let $\frakp_k$ denote an ideal of $\O_{\M}$ such that 
$N_{\M/K}(\frakp_k) = (\pi_k)$, so $F_k=N_{\M /\Q}(\frakp_k)$.
Further, let us assume that $k$ is chosen so that
whenever $\frakp_k$ is prime the following hold:
\begin{enumerate}
\item
the Frobenius endomorphism of $E$ over 
the field $\O_\M/\frakp_k$ is $\pi_k$;  
\item we have $P\bmod {\frakp_k} \not\in \lambda E(\O_{\M}/\frakp_k)$
for every prime ideal $\lambda$ of $\OK$ that divides $(\alpha_1 \cdots \alpha_s)$.

\end{enumerate}
In applications, we will make explicit choices for $K$, $E/M$, and~$P$, and then determine explicit arithmetic conditions on $k$ that ensure that the above assumptions hold.
Here we state our assumptions generically for the purpose of proving our main theoretical results, Theorems~\ref{mainthm1} and~\ref{mainthm2} below.

\begin{definition}
With notation as above, 
define $L_k\in\Z^+$ by $L_k\Z = {\frac{\Lambda_k}{\gamma}}\OK\cap\Z$.
\end{definition}

\begin{remark}
If $p$ is a prime, then $p\mid\norm_{K / \Q}(\alpha_1 \cdots \alpha_s)$ 
if and only if $p\mid L_k$, since 
$\frac{\Lambda_k}{\gamma} = \alpha_1^{k_1}\cdots\alpha_s^{k_s}$
and $L_k\Z = \left(\frac{\Lambda_k}{\gamma}\right)\cap\Z$.
\end{remark}

\begin{remark}
Whenever $p$ is a rational prime,
let $n_p := \ord_p(\prod_{i=1}^s N_{K/\Q}(\alpha_i)^{k_i})$ and let
$m_p := n_p$ if $p$ splits in $K$ and 
$m_p := \lceil n_p/2 \rceil$ otherwise.
If $(\alpha_1\cdots\alpha_s)$ is not divisible by any
rational prime that splits in $K$ then $L_k = \prod_p p^{m_p}$.
\end{remark}

\begin{lemma}
\label{NormIneq}
With notation as above we have
$$
N_{K/\Q}(\Lambda_k) \ge (\sqrt{F_k}-1)^2.
$$
\end{lemma}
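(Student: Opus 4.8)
The plan is to translate the statement about field norms into one about complex absolute values, where it becomes an immediate consequence of the triangle inequality. Since $K$ is imaginary quadratic, fix an embedding $K \hookrightarrow \C$ and let $|\cdot|$ denote the usual complex absolute value. For any $z \in K$ the nontrivial automorphism of $K/\Q$ is complex conjugation, so $\norm_{K/\Q}(z) = z\bar{z} = |z|^2$. In particular $\norm_{K/\Q}(\Lambda_k) = |\Lambda_k|^2$ and $F_k = \norm_{K/\Q}(\pi_k) = |\pi_k|^2$, i.e.\ $\sqrt{F_k} = |\pi_k|$.

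First I would record that $\sqrt{F_k} \ge 1$. This follows from the standing assumption $F_k > 16\,\norm_{K/\Q}(\gamma^2)$ made in \S\ref{assmptnsect}: since $\gamma$ is a nonzero element of $\OK$ we have $\norm_{K/\Q}(\gamma^2) \ge 1$, hence $F_k > 16$ and $\sqrt{F_k} > 4 > 1$. In particular $\sqrt{F_k} - 1 > 0$, which is exactly what allows an inequality to be squared at the final step.

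Next, using the relation $\Lambda_k = \pi_k - 1$ together with the reverse triangle inequality, I would write
$$
|\Lambda_k| = |\pi_k - 1| \ge |\pi_k| - 1 = \sqrt{F_k} - 1.
$$
Both sides are nonnegative by the previous paragraph, so squaring preserves the inequality and yields
$$
\norm_{K/\Q}(\Lambda_k) = |\Lambda_k|^2 \ge \left(\sqrt{F_k} - 1\right)^2,
$$
which is the desired bound.

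There is essentially no obstacle here: the only point requiring a moment's care is the justification that $\sqrt{F_k} - 1 \ge 0$, so that passing from $|\Lambda_k| \ge \sqrt{F_k} - 1$ to the squared inequality is legitimate; this is secured by the running hypothesis $F_k > 16\,\norm_{K/\Q}(\gamma^2)$. Everything else reduces to the identification of the norm with the square of the complex modulus, valid precisely because $K$ is imaginary quadratic, combined with the triangle inequality.
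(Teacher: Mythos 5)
Your proof is correct and is essentially the paper's own argument in different clothing: the paper expands $N_{K/\Q}(\pi_k-1) = \pi_k\overline{\pi_k} - 2\operatorname{Re}(\pi_k) + 1$ and bounds $\operatorname{Re}(\pi_k) \le \sqrt{N_{K/\Q}(\pi_k)}$, which is precisely the squared form of your reverse triangle inequality $|\pi_k - 1| \ge |\pi_k| - 1$. Your extra care about $\sqrt{F_k}-1 \ge 0$ before squaring is legitimate and correctly justified (and could even be avoided by using the two-sided form $|\pi_k-1|\ge\bigl||\pi_k|-1\bigr|$), while the paper sidesteps the issue entirely by working with the expanded expression $F_k - 2\sqrt{F_k} + 1$, which equals $\left(\sqrt{F_k}-1\right)^2$ unconditionally.
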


\begin{proof}
We have
\begin{align*}
N_{K/\Q}(\Lambda_k) &= N_{K /\Q}(\pi_k - 1) = (\pi_k - 1)(\overline{\pi_k} - 1) = \pi_k\overline{\pi_k} - 2\text{Re}(\pi_k) + 1\\
&\ge N_{K/\Q}(\pi_k) - 2\sqrt{N_{K/\Q}(\pi_k)} + 1 = (\sqrt{F_k}-1)^2.\qedhere
\end{align*}
\end{proof}

Whenever the $\O_\M$-ideal $\frakp_k$ is a prime ideal and
the Frobenius endomorphism of~$E$ over $\O_\M/\frakp_k$ is $\pi_k$,
then $\Lambda_k$ is separable as an endomorphism of $E$ mod $\frakp_k$;
this follows from Corollary 5.5 in Chapter III of \cite{silverman09}.

\subsection{Main Theoretical Results}\label{mainresults}

\begin{theorem}
\label{mainthm1}
With notation and assumptions as above, the following are equivalent:
\begin{enumerate}
\item[\rm (a)] the ideal $\frakp_k$ is prime;
\item[\rm (b)] $\Lambda_k P \equiv O_E$ mod $\frakp_k$, and for every prime ideal $\lambda \mid (\alpha_1\cdots\alpha_s)$ of $\O_K$ there is a point in $\frac{(\Lambda_k)}{\lambda}P$ that is  strongly nonzero mod $\frakp_k$.
\end{enumerate}
\end{theorem}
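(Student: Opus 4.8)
The plan is to prove the two implications separately, using Theorem~\ref{Lenstra} together with the norm estimates for (b)$\Rightarrow$(a), and the description of $E$ over the residue field as a cyclic $\OK$-module for (a)$\Rightarrow$(b). Throughout I write $\bar P$ for the reduction of $P$.

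For (b)$\Rightarrow$(a) I would argue by contradiction, as in the proof of Theorem~\ref{Lenstra}. If $\frakp_k$ is not prime, there is a prime ideal $\q$ of $\O_M$ dividing $\frakp_k$ with $q:=\norm_{M/\Q}(\q)\le\sqrt{F_k}$, and $E$ has good reduction at $\q$ since $\q$ is prime to $\disc(E)$. Reducing $\Lambda_k P\equiv O_E$ mod $\frakp_k$ gives $\Lambda_k\bar P=O$ in $E(\O_M/\q)$, so the annihilator $\mathfrak c:=\mathrm{Ann}_{\OK}(\bar P)$ divides $(\Lambda_k)$. For each prime $\lambda\mid(\alpha_1\cdots\alpha_s)$, hypothesis~(b) supplies $\beta\in\frac{(\Lambda_k)}{\lambda}$ with $\beta P$ strongly nonzero mod $\frakp_k$, hence nonzero mod $\q$; thus $\beta\notin\mathfrak c$, so $\frac{(\Lambda_k)}{\lambda}\not\subseteq\mathfrak c$. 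Comparing $\lambda$-adic valuations (the only prime at which $\frac{(\Lambda_k)}{\lambda}$ differs from $(\Lambda_k)$ is $\lambda$ itself, where $\mathfrak c\mid(\Lambda_k)$ already bounds $v_\lambda(\mathfrak c)$ from above) forces $v_\lambda(\mathfrak c)=v_\lambda(\Lambda_k)$ for every such $\lambda$. Since $\mathfrak c$ can fall short of $(\Lambda_k)$ only at primes dividing $\gamma$ but not $\alpha_1\cdots\alpha_s$, this yields $\norm_{K/\Q}(\mathfrak c)\ge\norm_{K/\Q}(\Lambda_k)/\norm_{K/\Q}(\gamma)$.

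I would then close the loop with the norm inequalities. As $\OK\bar P\cong\OK/\mathfrak c$ is a subgroup of $E(\O_M/\q)$, the Hasse bound gives $(\sqrt q+1)^2\ge\#E(\O_M/\q)\ge\norm_{K/\Q}(\mathfrak c)\ge\norm_{K/\Q}(\Lambda_k)/\norm_{K/\Q}(\gamma)$. Applying Lemma~\ref{NormIneq} and then Lemma~\ref{assm5} with $C=\norm_{K/\Q}(\gamma)$, which is legitimate because $\gamma\neq0$ forces $C\ge1$ and the standing assumption reads $F_k>16\norm_{K/\Q}(\gamma^2)=16C^2$, I obtain $\norm_{K/\Q}(\Lambda_k)/\norm_{K/\Q}(\gamma)\ge(\sqrt{F_k}-1)^2/C>(F_k^{1/4}+1)^2\ge(\sqrt q+1)^2$, a contradiction, so $\frakp_k$ is prime. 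This is the step that most needs care: unlike Theorem~\ref{Lenstra}, hypothesis~(b) controls only the primes dividing $\alpha_1\cdots\alpha_s$, and the primes dividing $\gamma$ must be absorbed by the slack factor $\norm_{K/\Q}(\gamma)$ furnished precisely by the normalization $F_k>16\norm_{K/\Q}(\gamma^2)$.

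For (a)$\Rightarrow$(b), assume $\frakp_k$ is prime and set $\F=\O_M/\frakp_k$. By assumption~(i) the Frobenius of $E/\F$ is $\pi_k=1+\Lambda_k$, and since it fixes the $\F$-rational point $\bar P$ we get $\Lambda_k\bar P=O$, i.e.\ $\Lambda_k P\equiv O_E$ mod $\frakp_k$. Moreover $E(\F)=\ker(\pi_k-1)=E[\Lambda_k]$, which is cyclic of order $\norm_{K/\Q}(\Lambda_k)$ because $\Lambda_k$ is separable mod $\frakp_k$ and $E$ has CM by the full ring $\OK$; thus $E(\F)\cong\OK/(\Lambda_k)$ as an $\OK$-module. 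A short ideal computation shows that, for $\lambda\mid(\Lambda_k)$, the submodule of $\OK/(\Lambda_k)$ annihilated by $\frac{(\Lambda_k)}{\lambda}$ is exactly the image of $\lambda$, corresponding to $\lambda E(\F)$; hence $\frac{(\Lambda_k)}{\lambda}\bar P=O$ if and only if $\bar P\in\lambda E(\F)$. Assumption~(ii) gives $\bar P\notin\lambda E(\F)$ for every $\lambda\mid(\alpha_1\cdots\alpha_s)$, so $\frac{(\Lambda_k)}{\lambda}\bar P\ne O$, and some point of $\frac{(\Lambda_k)}{\lambda}P$ is nonzero mod $\frakp_k$. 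Since $\frakp_k$ is prime, nonzero and strongly nonzero coincide, which yields~(b) and completes the equivalence.
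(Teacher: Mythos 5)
Your proof is correct, and your (a)$\Rightarrow$(b) direction is the same as the paper's: both use assumption (i) to identify $E(\O_\M/\frakp_k)\simeq\OK/(\Lambda_k)$ as $\OK$-modules, observe that the points killed by $\frac{(\Lambda_k)}{\lambda}$ are exactly $\lambda E(\O_\M/\frakp_k)$, and then invoke assumption (ii) together with the fact that nonzero and strongly nonzero coincide modulo a prime ideal. Where you genuinely diverge is (b)$\Rightarrow$(a): the paper disposes of it in two lines by citing Theorem~\ref{Lenstra} with $Q=\gamma P$ and $\Lambda=\Lambda_k/\gamma$ --- the substitution makes that theorem's hypotheses hold verbatim, since $\frac{(\Lambda)}{\lambda}Q=\frac{(\Lambda_k)}{\lambda}P$ and the primes dividing $\Lambda_k/\gamma$ are precisely those dividing $(\alpha_1\cdots\alpha_s)$, so the annihilator of $\gamma P$ modulo $\q$ is pinned down to exactly $(\Lambda_k/\gamma)$. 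You instead inline the Lenstra-style counting with $P$ itself: hypothesis (b) controls the annihilator $\mathfrak{c}$ of $\bar{P}$ only at primes dividing $(\alpha_1\cdots\alpha_s)$ (your valuation argument forcing $v_\lambda(\mathfrak{c})=v_\lambda(\Lambda_k)$ at such $\lambda$ is correct, since $\mathfrak{c}\supseteq(\Lambda_k)$ already bounds all other valuations), and the uncontrolled primes dividing $\gamma$ cost at most a factor $\norm_{K/\Q}(\gamma)$, giving $\norm_{K/\Q}(\mathfrak{c})\ge\norm_{K/\Q}(\Lambda_k)/\norm_{K/\Q}(\gamma)$ --- which is exactly the quantity $\#(\OK\cdot\gamma\bar{P})=\norm_{K/\Q}(\Lambda_k/\gamma)$ that the paper feeds into the Hasse bound. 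Both arguments therefore terminate in the identical contradiction via Lemmas~\ref{NormIneq} and~\ref{assm5} with $C=\norm_{K/\Q}(\gamma)$. The paper's substitution buys brevity and modular reuse of Theorem~\ref{Lenstra}; your version duplicates that proof but is self-contained and makes explicit the point the substitution hides, namely that (b) says nothing about primes dividing $\gamma$ and that the normalization $F_k>16\norm_{K/\Q}(\gamma^2)$ is precisely the slack needed to absorb them.
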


\begin{proof}
Assume $\frakp_k$ is a prime ideal. 
Since we assumed that the Frobenius endomorphism of $E$ over 
$\O_\M/\frakp_k$ is $\pi_k$, as $\OK$-modules we have 
$$
E(\O_{\M}/\frakp_k) \simeq  \ker(\pi_k - 1) \simeq  \OK/(\pi_k - 1) = \OK/(\Lambda_k).                  
$$
Thus, $\Lambda_k P \equiv O_E$ mod $\frakp_k$.
Suppose $\lambda$ is a prime ideal of $\OK$ that divides $(\alpha_1 \cdots \alpha_s)$.
Since $E(\O_{\M}/\frakp_k)\simeq  \OK/(\Lambda_k)$,
the points in $\lambda E(\O_{\M}/\frakp_k)$ are exactly the points
in $E(\O_{\M}/\frakp_k)$ that are killed by $\frac{(\Lambda_k)}{\lambda}$.
Assumption (ii) now gives (b).

Conversely, suppose that
$\Lambda_k P \equiv O_E$ mod $\frakp_k$, and that for every prime ideal $\lambda \mid \left(\alpha_1\cdots\alpha_s\right)$ there is a point in $\frac{(\Lambda_k)}{\lambda}P$ that is  strongly nonzero mod $\frakp_k$.
Since $F_k > 16\norm_{K/\Q}(\gamma^2)$, by Lemma \ref{assm5} with
$F = F_k$ and $C = \norm_{K/\Q}(\gamma)$ and 
Lemma \ref{NormIneq}
we have 
$N_{K/\Q}(\Lambda_k/\gamma) > (F_k^{1/4}+1)^2$.
Apply Theorem \ref{Lenstra} with 
$J= \frakp_k$, $\Lambda = \Lambda_k/\gamma$, and $Q=\gamma P$
to conclude that $\frakp_k$ is prime.
\end{proof}

\begin{theorem}
\label{mainthm2}
With notation and assumptions as above, suppose further that  
$(\alpha_1\cdots\alpha_s)$ is not divisible in $\OK$ by any
rational prime that splits in $K$,
and that
$F_k > 16\norm_{K/\Q}(\gamma\prod_\lambda \lambda)^2$
where $\lambda$ runs over the prime ideals of $\O_K$ that divide
$(\alpha_1\cdots\alpha_s)$ and are ramified in $K/\Q$.
Then the following are equivalent:
\begin{enumerate}
\item[\rm (a)] the ideal $\frakp_k$ is prime;
\item[\rm (b)] $L_k\gamma P \equiv O_E$ mod $\frakp_k$, and 
$\frac{L_k}{p}\gamma P$ is strongly nonzero mod $\frakp_k$
 for every prime divisor $p$ of
$\norm_{K / \Q}(\alpha_1 \cdots \alpha_s)$.
\end{enumerate}
\end{theorem}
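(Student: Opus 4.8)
The plan is to reduce both implications to statements about the additive order of the point $\gamma P$ modulo primes dividing $\frakp_k$, and then to feed these into Lemma~\ref{Mxmodule} together with Lemma~\ref{NormIneq} and Lemma~\ref{assm5}. The bridge between the ideal-theoretic conditions of Theorem~\ref{mainthm1} and the rational-integer conditions here is the observation (from the first remark following the definition of $L_k$) that the rational primes $p$ dividing $\norm_{K/\Q}(\alpha_1\cdots\alpha_s)$ are exactly the primes dividing $L_k$, together with the fact that the ideal $\a:=(\alpha_1^{k_1}\cdots\alpha_s^{k_s})=(\Lambda_k/\gamma)$ satisfies $L_k\Z=\a\cap\Z$ and, by the new hypothesis, is divisible by no rational prime that splits in $K$.

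For (a)$\Rightarrow$(b), I assume $\frakp_k$ is prime. As in the proof of Theorem~\ref{mainthm1}, assumption~(i) gives the $\OK$-module isomorphism $E(\O_\M/\frakp_k)\cong\OK/(\Lambda_k)$. I would then compute the annihilator of the image of $\gamma P$ by decomposing $\OK/(\Lambda_k)$ via the CRT over the primes $\mu\mid(\Lambda_k)$. Assumption~(ii) forces the $\lambda$-component of $P$ to be a generator of $\OK/\lambda^{e_\lambda}$ for every $\lambda\mid(\alpha_1\cdots\alpha_s)$, whereas for the remaining primes (which divide $\gamma$ only) multiplication by $\gamma$ kills the corresponding component. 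The upshot is that the annihilator of $\gamma P$ is exactly $\a=(\Lambda_k/\gamma)$, so the additive order of $\gamma P$ mod $\frakp_k$ is the positive generator of $\a\cap\Z$, namely $L_k$. Both parts of (b) are then immediate: $L_k\gamma P\equiv O_E$, and for each prime $p\mid L_k$ we have $\frac{L_k}{p}\gamma P\not\equiv O_E$ mod~$\frakp_k$, which is strongly nonzero since strong nonvanishing and nonvanishing coincide for a prime ideal.

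For (b)$\Rightarrow$(a), I would argue by contradiction as in Theorem~\ref{Lenstra}: if $\frakp_k$ is not prime it has a prime divisor $\q$ with $q:=\norm_{\M/\Q}(\q)\le\sqrt{F_k}$. Reducing the hypotheses of (b) modulo $\q$ (pushing $L_k\gamma P\equiv O_E$ down to $\q$ since $\q\mid\frakp_k$, and keeping $\frac{L_k}{p}\gamma P\not\equiv O_E$ mod~$\q$ by strong nonvanishing) shows that $\gamma P$ has additive order exactly $L_k$ in $E(\O_\M/\q)$. Applying Lemma~\ref{Mxmodule} with $\m=E(\O_\M/\q)$, $x=\gamma P$, and $\a=(\Lambda_k/\gamma)$ yields
$$
\#E(\O_\M/\q)\ \ge\ \#(\OK\cdot x)\ \ge\ \frac{\norm_{K/\Q}(\Lambda_k)}{\norm_{K/\Q}(\gamma\prod_\lambda\lambda)},
$$
with $\lambda$ ranging over the ramified primes dividing $(\alpha_1\cdots\alpha_s)$. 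Bounding the left side above by $(\sqrt{q}+1)^2$ via Hasse, the numerator below by $(\sqrt{F_k}-1)^2$ via Lemma~\ref{NormIneq}, and invoking Lemma~\ref{assm5} with $C=\norm_{K/\Q}(\gamma\prod_\lambda\lambda)$ (legitimate since $F_k>16C^2$ by hypothesis) gives $(\sqrt{q}+1)^2>(F_k^{1/4}+1)^2$, hence $q>\sqrt{F_k}$, contradicting $q\le\sqrt{F_k}$.

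The main obstacle is the exact-order computation, and especially the claim in (a)$\Rightarrow$(b) that multiplication by $\gamma$ precisely strips away the $\gamma$-part of $P$ while leaving the $\alpha$-part intact, so that the order of $\gamma P$ is $L_k$ rather than some proper divisor or multiple; getting the local valuations right at each prime of $\OK$ (including primes dividing both $\gamma$ and $(\alpha_1\cdots\alpha_s)$) is where care is needed. Everything else is bookkeeping with norms and the already-established Lemmas~\ref{Mxmodule}, \ref{NormIneq}, and~\ref{assm5}.
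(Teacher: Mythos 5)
Your proposal is correct and follows essentially the same route as the paper's proof: both directions hinge on showing that the annihilator of $\gamma P$ in $\OK$ is exactly $\bigl(\frac{\Lambda_k}{\gamma}\bigr)$ (so the additive order of $\gamma P$ is $L_k$), and the converse direction is the paper's argument verbatim, via Lemma~\ref{Mxmodule}, the Hasse bound, Lemma~\ref{NormIneq}, and Lemma~\ref{assm5} with $C=\norm_{K/\Q}(\gamma\prod_\lambda\lambda)$. The only cosmetic difference is that in (a)$\Rightarrow$(b) you compute the annihilator locally via a CRT decomposition of $\OK/(\Lambda_k)$ directly from assumption (ii), whereas the paper deduces the same equality abstractly from Theorem~\ref{mainthm1}(b) by observing that any strictly larger annihilator would force $\frac{(\Lambda_k)}{\lambda}P\equiv O_E$ mod $\frakp_k$ for some prime $\lambda\mid(\alpha_1\cdots\alpha_s)$; your valuation-by-valuation check (including at primes dividing both $\gamma$ and $\prod\alpha_i$) is a sound instantiation of that step.
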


\begin{proof}
Suppose $\frakp_k$ is prime. By Theorem \ref{mainthm1}, we have 
$\Lambda_k P \equiv O_E$ mod $\frakp_k$, and 
$\frac{(\Lambda_k)}{\lambda}P \not\equiv O_E$  mod $\frakp_k$
for every prime ideal $\lambda \mid (\alpha_1\cdots\alpha_s)$.
Let
$$
\a := \{ \beta\in\OK : \beta(\gamma P)\equiv O_E \bmod{\frakp_k}\} \supseteq \left(\frac{\Lambda_k}{\gamma}\right).
$$
If $\a \neq \left(\frac{\Lambda_k}{\gamma}\right)$ then 
$\a\lambda \supseteq \left(\frac{\Lambda_k}{\gamma}\right)$ for some prime ideal
$\lambda$ of $\O_K$. 
Thus,  
$$
\lambda^{-1}\left(\frac{\Lambda_k}{\gamma}\right)(\gamma P) \equiv O_E\bmod \frakp_k,
$$
that is, 
$\frac{(\Lambda_k)}{\lambda}P \equiv O_E$ mod $\frakp_k$.
Since the prime ideal $\lambda$ divides $\bigl(\frac{\Lambda_k}{\gamma}\bigr)$
it divides $(\prod \alpha_i)$, which is a contradiction. 
Thus, $\a = \bigl(\frac{\Lambda_k}{\gamma}\bigr).$

Since $L_k\in\Z^+$ and $L_k\Z = \left(\frac{\Lambda_k}{\gamma}\right)\cap\Z$
we have: 
\begin{enumerate}[(i)]
\item
$L_k\gamma\in (\Lambda_k)$ so $L_k\gamma P \equiv O_E$ mod $\frakp_k$;
\item for all primes $p\mid L_k$ we have 
$\frac{L_k}{p} \not\in \left(\frac{\Lambda_k}{\gamma}\right)$.
\end{enumerate}
Thus $\frac{L_k}{p}(\gamma P) \not\equiv O_E$ mod $\frakp_k$, and we have shown that (a) implies (b).

We now assume (b) and suppose, for the sake of obtaining a contradiction, that $\frakp_k$ is not prime.
Then there is a prime ideal $\q$ of $\O_{\M}$ dividing $\frakp_k$ such that 
$q := N_{\M/\Q}(\q) \le N_{\M/\Q}(\frakp_k)^{1/2} = \sqrt{F_k}$.
Next we will apply Lemma~\ref{Mxmodule} with $\m=E(\O_{\M}/\q)$,
$x= \gamma P$ mod $\q \in \m$, and 
$\a = \left(\frac{\Lambda_k}{\gamma}\right) = (\alpha_1^{k_1}\cdots\alpha_s^{k_s})$
(using our assumption that
$(\alpha_1\cdots\alpha_s)$ is not divisible in $\OK$ by any rational prime that splits in $K$).
Now (b) implies that $L_k$ is the order of $x$ in $\m$.
By Lemmas \ref{Mxmodule} and \ref{NormIneq}, and the Hasse bound, we have
\begin{align*}
(F_k^{1/4}+1)^2 &\ge (\sqrt{q}+1)^2 \ge |E(\O_{\M}/\q)|\\
&\ge \#\OK\cdot x \ge 
{\frac{N_{K/\Q}(\frac{\Lambda_k}{\gamma})}{N_{K/\Q}(\prod_\lambda \lambda)}}
\ge \frac{(\sqrt{F_k}-1)^2}{N_{K/\Q}(\gamma\prod_\lambda \lambda)},
\end{align*}
where $\lambda$ runs over the prime ideals of $\O_K$ that divide
$(\alpha_1 \cdots \alpha_s)$ and are ramified in $K/\Q$.
Since $F_k > 16\norm_{K/\Q}(\gamma\prod_\lambda \lambda)^2$,
we can apply Lemma \ref{assm5} with $C=\norm_{K/\Q}(\gamma\prod_\lambda \lambda)$
to obtain the desired contradiction.
\end{proof}

\begin{remark}
If $F_k$ is prime, then $\frakp_k$ is a prime ideal.
If $\frakp_k$ is a prime ideal, then $F_k$ is a prime power $p^r$
where $r\mid 2h$ and $h$ is the class number of $K$.
Thus, Theorems \ref{mainthm1} and \ref{mainthm2} are
essentially primality tests for $F_k$.
\end{remark}

\subsection{Algorithms}

The following two algorithms are deterministic primality tests for the sequence $F_k$.
That they produce correct outputs follows from Theorems
\ref{mainthm1} and \ref{mainthm2}.
In the algorithms, points on $E$ and $E$ mod $\frakp_k$ are expressed 
in primitive projective coordinates, as in \S 3 of \cite{LenstraICM}.
Thus $P = [x:y:z]\in E(\M)$ implies that $x,y,z\in \O_\M$ and that the ideal $(x,y,z)$ is equal to $\O_\M.$  

Algorithm \ref{primetest2} takes as input the data in the hypotheses of Theorem \ref{mainthm1}.  

\begin{algorithm}
\label{primetest2}~\\\vspace{-12pt}
\begin{enumerate}[\rm 1.] 
\item Compute $\tilde{P} := P$ mod ${\frakp_k}$ and 
$\Lambda_k\tilde{P} = [x' : y' : z'] \in E(\O_{\M} / \frakp_k)$.  
\item If $z'  \neq 0$ in $\O_{\M} / \frakp_k$, output ``$\frakp_k$ is not a prime ideal"  and terminate.
\item 
For every prime ideal $\lambda | \left( \prod \alpha_i\right)$: 
\begin{enumerate}[\rm a.]
\item
Choose $\beta_{\lambda} \in \frac{(\Lambda_k)}{\lambda} - (\Lambda_k)$ and 
compute $\beta_{\lambda}\tilde{P} = [x_{\lambda} : y_{\lambda} : z_{\lambda}] \in E(\O_{\M} / \frakp_k)$.
\item  
Choose  $Z_{\lambda} \in \O_{\M}$ with $Z_{\lambda} \equiv z_{\lambda} \bmod{\frakp_k}$
and compute the ideal\\
$\Delta_{\lambda} := (Z_{\lambda}, \frakp_k)$ of $\O_{\M}$.
\end{enumerate}
\item If there is a $\lambda$ with $\lambda | \left( \prod \alpha_i\right)$ and $\Delta_{\lambda} \neq \O_\M$, output ``$\frakp_k$ is not a prime ideal,"
otherwise, output ``$\frakp_k$ is a prime ideal."
\end{enumerate}
\end{algorithm}

Algorithm \ref{primetestmod} takes as input the data in the hypotheses of Theorem \ref{mainthm2}.

\begin{algorithm}
\label{primetestmod}~\\\vspace{-12pt}
\begin{enumerate}[\rm 1.] 
\item Compute $Q := \gamma P$ mod ${\frakp_k}$. 
\item
For every $p | \norm_{K / \Q}( \prod \alpha_i)$:
\begin{enumerate}[\rm a.] 
\item
Compute $P_{p} := \frac{L_k}{p}Q = [x_p : y_p : z_p] \in E(\O_{\M} / \frakp_k)$.  
\item 
Choose $Z_{p} \in \O_{\M}$ with $Z_p \equiv z_p \bmod{\frakp_k}$
and compute the ideal\\
$\Delta_{p} := (Z_{p} , \frakp_k)$ of $\O_{\M}$.
\end{enumerate}
\item If there is a prime $p | \norm_{K / \Q}( \prod \alpha_i)$ such that 
$\Delta_{p} \neq \O_\M$,\\
then output ``$\frakp_k$ is not a prime ideal'' and terminate.
\item Compute $L_k Q = [x':y':z'] \in E(\O_{\M} / \frakp_k)$.
\item If $z' \neq 0$ in $\O_{\M} / \frakp_k$, output 
``$\frakp_k$ is not a prime ideal,''\\
otherwise, output ``$\frakp_k$ is a prime ideal."
\end{enumerate}
\end{algorithm}

\section{Constraints on $K$}
\label{splitsect}

In order to obtain efficient deterministic primality tests
we need to determine in advance the ``good" $k$, that is, the values of
$k$ that satisfy assumptions (i) and (ii) of \S\ref{assmptnsect} whenever the
ideal $\frakp_k$ is prime.
Satisfying assumption (i) is not a problem.
Finding a nice set of $k$ that satisfy assumption (ii) is more problematic.

In Theorem \ref{splittingiff} below we show that assumption (ii), which states that
$P\bmod \mathfrak{p}_k \not\in\lambda E(\O_\M/\mathfrak{p}_k)$ for every prime $\lambda$ dividing $(\alpha_1\cdots \alpha_s)$,
is equivalent to the prime ideal $\frakp_k$
splitting completely in $F$ but not in~$L$, for certain extension fields
$F$ and $L$ with 
$\M \subseteq F \subseteq L$.
In particular, it is necessary to have $F \neq L$. 
When the extension $L/\M$ is not abelian, 
we do not know a good way to characterize the prime ideals of $\O_\M$ that split completely in $F$ but not in~$L$. 
However, if $L/\M$ is abelian, then 
class field theory implies that the splitting behavior 
in $L$ and $F$ of a prime of $\O_\M$ is determined by congruence conditions.
So in order to obtain useful algorithms
we insist that $L/\M$ be abelian and that $F \neq L$.
We show in Lemma \ref{abelianeither1} below that this forces $F=\M$, 
and this in turn is equivalent to 
$E[\lambda] \subseteq E(\M)$.
This severely restricts the possibilities for $K$ and $\lambda$
(see Theorem \ref{abeliansplit2} below).
In Theorems \ref{fewexamples} and \ref{classno2examples} 
we determine the exact possibilities
for $K$ and $\lambda$ when $K$ has class number one or two and $E$
is defined over $\Q(j(E))$.

We begin with some preliminaries. 
Suppose $K$ is an imaginary quadratic field, 
$E$ is an elliptic curve over a field $M$
(not necessarily a number field) with CM by $\O_K$, 
the endomorphisms of $E$ are all defined over $M$, 
$\lambda$ is a prime ideal of $\O_K$ such that $\ch(M)\nmid N_{K/\Q}(\lambda)$,
and $p$ is the rational prime below $\lambda$.
Let  $E'=E/E[\bar{\lambda}]$, let 
$\hat{\varphi}: E \to E'$ denote the natural isogeny, and 
let $\varphi: E' \to E$ denote the dual isogeny. 

\begin{lemma}
\label{EEprimeLem}
With notation as above, $M(E[{\lambda}]) = M(E'[{\lambda}])$.
\end{lemma}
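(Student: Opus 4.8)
The plan is to exploit the fact that the two isogenies $\hat\varphi\colon E\to E'$ and $\varphi\colon E'\to E$ are defined over $M$ and compose to multiplication-by-$p$ up to a unit. First I would recall the setup: $E'=E/E[\bar\lambda]$, so $\ker\hat\varphi=E[\bar\lambda]$ and $\hat\varphi$ is an $M$-rational isogeny of degree $N_{K/\Q}(\bar\lambda)=p$ (the degree being the norm of $\lambda$, which by hypothesis is prime to $\ch(M)$, so the isogeny is separable and the torsion is well-behaved). The composite $\varphi\circ\hat\varphi$ is multiplication by the endomorphism corresponding to $\lambda\bar\lambda$ on $E$, which is $[\pm p]$ up to a unit in $\OK$; the key structural point is that $\hat\varphi$ restricts to an isomorphism $E[\lambda]\isom E'[\lambda]$. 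Indeed $E[p]=E[\lambda]\oplus E[\bar\lambda]$, and since $\ker\hat\varphi=E[\bar\lambda]$ meets $E[\lambda]$ trivially, $\hat\varphi$ is injective on $E[\lambda]$; counting shows it maps $E[\lambda]$ isomorphically onto a subgroup of $E'[p]$ of the right order, which one identifies as $E'[\lambda]$.

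Next I would use this $M$-rational isomorphism to compare fields of definition. Since $\hat\varphi$ is defined over $M$, it commutes with the action of $\Gal(\bar M/M)$; hence $\hat\varphi$ carries $E[\lambda]$ $\Gal$-equivariantly onto $E'[\lambda]$. Concretely, a Galois element $\sigma$ fixes every point of $E[\lambda]$ if and only if it fixes every point of $E'[\lambda]=\hat\varphi(E[\lambda])$, because $\sigma(\hat\varphi(T))=\hat\varphi(\sigma(T))$. Therefore the subgroup of $\Gal(\bar M/M)$ fixing $E[\lambda]$ pointwise equals the subgroup fixing $E'[\lambda]$ pointwise, and the fixed fields of these subgroups are exactly $M(E[\lambda])$ and $M(E'[\lambda])$ respectively. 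This gives $M(E[\lambda])=M(E'[\lambda])$.

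The one point requiring care — the main obstacle — is verifying that $\hat\varphi$ genuinely sends $E[\lambda]$ \emph{onto} $E'[\lambda]$ (not merely into $E'[p]$), i.e.\ identifying the image correctly with the $\lambda$-torsion of $E'$ rather than the $\bar\lambda$-torsion. This is where the CM structure must be used: the isogeny $\hat\varphi$ intertwines the $\OK$-action on $E$ with that on $E'$ (since all endomorphisms are defined over $M$ and $E'$ inherits CM by $\OK$), so it is $\OK$-linear, and hence it maps the $\lambda$-torsion of $E$ to the $\lambda$-torsion of $E'$. Combined with injectivity on $E[\lambda]$ and the order count ($\#E[\lambda]=\#E'[\lambda]=p$ since $\lambda$ is prime to $\ch(M)$), this forces $\hat\varphi|_{E[\lambda]}$ to be an isomorphism onto $E'[\lambda]$. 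With that identification in hand, the Galois-equivariance argument above closes the proof. If $\lambda=\bar\lambda$ (i.e.\ $\lambda$ ramified), the same reasoning applies since $E[\lambda]$ is still $\OK$-stable and the degree count is unchanged.
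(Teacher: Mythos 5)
Your argument works in the split case, but it fails precisely where you wave it through: when $\lambda = \bar\lambda$. In the ramified case $\ker\hat\varphi = E[\bar\lambda] = E[\lambda]$, so far from being injective on $E[\lambda]$, the isogeny $\hat\varphi$ \emph{annihilates} it; moreover the decomposition $E[p] = E[\lambda]\oplus E[\bar\lambda]$ that you invoke does not exist there (one instead has $E'[\lambda] = \hat\varphi(E[p])$). So your closing sentence, that ``the same reasoning applies'' when $\lambda = \bar\lambda$, is false. The inert case is also misstated: there $\lambda = (p)$, $\deg\hat\varphi = N_{K/\Q}(\bar\lambda) = p^2$ rather than $p$, and again $\hat\varphi$ kills $E[\lambda] = E[p]$; the lemma is trivially true there because $E'\simeq E$ over $M$ (Remark \ref{PQwelldefrmk}(i)), but not via your map. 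These cases cannot be discarded: the lemma is applied in Theorem \ref{splittingiff} for an arbitrary prime $\lambda$, and the ramified and inert possibilities (e.g.\ $\lambda = \sqrt{-2}$, $\sqrt{-3}$, $(2)$ in Theorem \ref{fewexamples}, and cases (b), (c) of Theorem \ref{abeliansplit2}) genuinely occur downstream.

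The paper sidesteps this by producing a different $M$-rational map, going in the opposite direction, that works uniformly. Choose an ideal $\a$ of $\OK$ and $\beta\in\OK$ with $(\beta) = \a\bar\lambda$ and $N_{K/\Q}(\a)$ prime to both $\ch(M)$ and $N_{K/\Q}(\lambda)$; such an $\a$ exists in the inverse ideal class of $\bar\lambda$, and this auxiliary ideal is unavoidable because $\bar\lambda$ itself need not be principal (class number two in the main application, $K = \Q(\sqrt{-15})$). Since $\ker\hat\varphi = E[\bar\lambda]\subseteq\ker\beta$ and everything in sight is separable, the endomorphism $\beta$ factors as $\psi\circ\hat\varphi$ with $\psi\colon E'\to E$ defined over $M$; one computes $\ker\psi = E'[\a]$, which meets $E'[\lambda]$ trivially because $N_{K/\Q}(\a)$ is prime to $N_{K/\Q}(\lambda)$. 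Hence $\psi$ restricts to an $M$-rational isomorphism $E'[\lambda]\isom E[\lambda]$ in the split, inert, and ramified cases alike, and your Galois-equivariance argument (which is correct as far as it goes, including the $\OK$-linearity used to identify the $\lambda$-torsion) then finishes the proof. To repair your write-up, either adopt this factorization or give separate treatments of the inert case ($E\simeq E'$ over $M$) and the ramified case, where some map other than $\hat\varphi|_{E[\lambda]}$ is genuinely required.
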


\begin{proof}
There are an $\OK$-ideal $\a$ and an element $\beta\in\OK$
such that $(\beta)=\a \overline{\lambda}$ and
$N_{K/\Q}(\a)$ is not divisible by $\ch(M)$ or by $N_{K/\Q}(\lambda)$.
Since $\ch(M)$ does not divide $N_{K/\Q}(\lambda)N_{K/\Q}(\beta)$, the isogenies $\hat{\varphi}$ 
and $\beta$ are separable.
Since
$$\ker(\hat{\varphi}) = \ker(\overline{\lambda}) \subseteq \ker(\beta),$$
there is a unique isogeny $\psi : E'\to E$ such that
$\beta = \psi\circ\hat{\varphi}$ (see Corollary~4.11 in \cite[\S III.4]{silverman09}).
Then $\psi$ is defined over $M$.
Since $\ker(\hat{\varphi})=E[\overline{\lambda}]$, 
$\hat{\varphi}$ is injective on $E[\a]$, we have
$E'[\a] = \hat{\varphi}(E[\a]) \subseteq \ker(\psi)$.
Both $E'[\a]$ and $\ker(\psi)$ have order $N(\a)$, so $\ker(\psi)=E'[\a]$.
Thus $\ker(\psi)\cap E'[\lambda] =0$,
hence $\psi$ induces an $M$-isomorphism 
from $E'[{\lambda}]$ onto $E[{\lambda}]$ as desired.
\end{proof}
\smallskip

\begin{remark}~\\\vspace{-12pt}
\label{PQwelldefrmk}
\begin{enumerate}
\item
If $\lambda$ is a principal ideal $(\alpha)$ of $\O_K \subseteq\End(E)$
(e.g., if $p$ is inert in $K/\Q$)
then $E$ and $E'$ are isomorphic over $M$,
and for all
$P\in E(M)$ we have 
$M(\varphi^{-1}(P)) = M(\alpha^{-1}(P))$.
\item 
If $P\in E(\overline{M})$ and 
$Q_1, Q_2\in E'(\overline{M})$ satisfy $\varphi(Q_i)=P$,
then $Q_1 - Q_2 \in \ker(\varphi) =   
E'[\lambda]$. Thus 
$M(E'[\lambda],Q_1)=M(E'[\lambda],Q_2)$, 
so 
$M(E'[\lambda],\varphi^{-1}(P))$ is well-defined.
\end{enumerate}
\end{remark}

\begin{lemma}
\label{fandlambda}
With notation as above, $\varphi(E'(M)) = \lambda E(M)$.
\end{lemma}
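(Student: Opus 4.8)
The plan is to prove the two inclusions $\lambda E(M)\subseteq\varphi(E'(M))$ and $\varphi(E'(M))\subseteq\lambda E(M)$ separately, using in each direction a lifting argument along a separable isogeny followed by Galois descent. The structural facts I would record first are: both $\varphi$ and $\hat\varphi$ are separable (their kernels $E'[\lambda]$ and $E[\overline\lambda]$ have order $N_{K/\Q}(\lambda)$, which is prime to $\ch(M)$ by hypothesis); that $\varphi\circ\hat\varphi=[n]$, where $n:=N_{K/\Q}(\lambda)\in\Z^+$ and $(n)=\lambda\overline\lambda$ in $\OK$; and that $\varphi$ is $\OK$-linear. The last point I would justify by noting that the CM on $E'$ is the one induced from $E$, which makes $\hat\varphi$ $\OK$-linear by construction, and then dualizing (using $\widehat{[\mu]}=[\overline\mu]$) to transfer linearity to $\varphi$. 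Throughout I would work over a separable closure $M^{\mathrm{sep}}$ with $G:=\Gal(M^{\mathrm{sep}}/M)$, so that $E(M)=E(M^{\mathrm{sep}})^{G}$ and, because the relevant isogenies are separable, every $M$-rational point has a full fiber defined over $M^{\mathrm{sep}}$.

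For the inclusion $\lambda E(M)\subseteq\varphi(E'(M))$ it suffices to show $\nu Q\in\varphi(E'(M))$ for each $\nu\in\lambda$ and each $Q\in E(M)$, since such points generate $\lambda E(M)$ and $\varphi(E'(M))$ is a subgroup. I would pick $R\in E'(M^{\mathrm{sep}})$ with $\varphi(R)=Q$. For $\sigma\in G$ we get $\varphi(R^{\sigma}-R)=Q^{\sigma}-Q=O_E$, so $R^{\sigma}-R\in\ker\varphi=E'[\lambda]$; since $\nu\in\lambda$ annihilates $E'[\lambda]$, the point $\nu R$ is $G$-fixed, hence $\nu R\in E'(M)$. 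Then $\nu Q=\nu\varphi(R)=\varphi(\nu R)\in\varphi(E'(M))$, using the $\OK$-linearity of $\varphi$.

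For the reverse inclusion $\varphi(E'(M))\subseteq\lambda E(M)$, take $R\in E'(M)$ and lift it along the separable isogeny $\hat\varphi$ to some $Q\in E(M^{\mathrm{sep}})$ with $\hat\varphi(Q)=R$. For $\sigma\in G$ we have $\hat\varphi(Q^{\sigma}-Q)=R^{\sigma}-R=O_E$, so $Q^{\sigma}-Q\in\ker\hat\varphi=E[\overline\lambda]$; hence for every $\mu\in\overline\lambda$ the point $\mu Q$ is $G$-fixed and so lies in $E(M)$. Now $\varphi(R)=\varphi(\hat\varphi(Q))=[n]Q=nQ$, and since $(n)=\lambda\overline\lambda$ I can write $n=\sum_i\nu_i\mu_i$ with $\nu_i\in\lambda$ and $\mu_i\in\overline\lambda$. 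Therefore $\varphi(R)=\sum_i\nu_i(\mu_i Q)$, where each $\mu_i Q\in E(M)$ and each $\nu_i\in\lambda$, which exhibits $\varphi(R)\in\lambda E(M)$.

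The step I expect to be the main obstacle is the reverse inclusion, specifically the observation that although the chosen lift $Q$ need not be $M$-rational, every multiple $\mu Q$ with $\mu\in\overline\lambda$ is $M$-rational, because $\overline\lambda$ kills the cocycle values $Q^{\sigma}-Q\in E[\overline\lambda]$; this is exactly what lets one feed the factorization $(n)=\lambda\overline\lambda$ and land inside $\lambda E(M)$. The remaining care is bookkeeping valid over an arbitrary, possibly imperfect, field: using $M^{\mathrm{sep}}$ rather than an algebraic closure and invoking separability of $\varphi$ and $\hat\varphi$ to guarantee that the lifts exist and that descent applies, together with pinning down the $\OK$-linearity of $\varphi$ used in the forward direction.
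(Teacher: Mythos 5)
Your proof is correct and follows essentially the same route as the paper's: both inclusions are proved by the same Galois-descent mechanism, lifting a rational point along a separable isogeny, observing that the cocycle $\sigma(R)-R$ lands in the kernel ($E'[\lambda]$ or $E[\bar{\lambda}]$), killing it with elements of the appropriate ideal, and finishing with the factorization $N_{K/\Q}(\lambda)\OK=\lambda\bar{\lambda}$. The only differences are minor: for $\lambda E(M)\subseteq\varphi(E'(M))$ you lift along $\varphi$ and invoke the $\OK$-linearity of $\varphi$, where the paper instead divides each $Q_i$ by $p$ and pushes through $\hat{\varphi}$; and your uniform use of $n=N_{K/\Q}(\lambda)$ with $\varphi\circ\hat{\varphi}=[n]$ absorbs the inert case, which the paper splits off at the start as clear.
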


\begin{proof}
This is clear if $p$ is inert in $K$, so assume
$p$ is split or ramified.
Define $G_M:= \Gal(\bar{M}/M)$.
By the definition of $E'$, we have
$E'(\bar{M})= E(\bar{M})/E[\bar{\lambda}]$. 
It follows that
$$
E'(M) = E'(\bar{M})^{G_M} = 
\{ \hat{\varphi}(R) : R \in E(\bar{M}) \text{ and $\sigma(R)-R\in E[\bar{\lambda}], \medspace \forall \sigma\in G_M \}$}.
$$

Suppose $\hat{R}\in E'(M)$, that is, 
$\hat{R}=\hat{\varphi}(R) = R + E[\bar{\lambda}]$, with $R \in E(\bar{M})$ and
$\sigma(R)-R\in E[\bar{\lambda}]$ for all $\sigma\in G_M$.
Then $\bar{\lambda}R \subset E(M)$ (since if $\beta\in\bar{\lambda}$ then we have
$\sigma(\beta R)-\beta R = \beta(\sigma R-R)=O_{E'}$ for all $\sigma\in G_M$).
Since $\varphi \circ \hat{\varphi} = p$, we have 
$\varphi(\hat{R}) = pR\in p\O_K R = \lambda\bar{\lambda}R \subset \lambda E(M)$.

Conversely, suppose $P\in\lambda E(M)$, i.e.,
$P = \sum \alpha_i Q_i$ with $\alpha_i\in \lambda$ and $Q_i\in E(M)$.
Take $S_i\in E(\bar{M})$ so that $pS_i=Q_i$, and let
$R = \sum \alpha_i S_i$.
If $\beta\in\bar{\lambda}$ and $\sigma\in G_M$, then
$\beta(\sigma R-R) =  \sum \beta\alpha_i(\sigma S_i - S_i) = O_{E'}$
(since $\beta\alpha_i\in \bar{\lambda}{\lambda}=p\O_K$, and 
all the endomorphisms of $E$ are defined over $M$ so 
$\sigma S_i - S_i \in E[p]$).
Thus, $\sigma R-R \in E[\bar{\lambda}]$, so
$\hat{\varphi}(R) \in E'(M)$.
Since $\varphi(\hat{\varphi}(R)) = pR = \sum \alpha_i Q_i = P$, 
we have $P\in \varphi(E'(M))$.
\end{proof}

\begin{theorem}
\label{splittingiff}
Suppose $K$ is an imaginary quadratic field,
$\lambda$ is a prime ideal of $\O_K$,
$E$ is an elliptic curve over a number field $\M$ with CM by $\O_K$, 
$\p$ is a prime ideal of~$\O_\M$, 
and
$P \in E(\M)$.
Suppose   
$\p \nmid N_{K/\Q}(\lambda)\Delta(E)$, where $\Delta(E)$ is the 
discriminant of $E$.
Let
$E'=E/E[\bar{\lambda}]$,  let
$\hat{\varphi}: E \to E'$ denote the natural isogeny, and
let $\varphi: E' \to E$ denote the dual isogeny.
Let $\tilde{E}$ and $\tilde{P}$ denote the reductions modulo $\p$ of
$E$ and $P$, respectively.
Let $F:= \M(E'[\lambda])$ and $L:= F(\varphi^{-1}(P))$.
Then the following are equivalent:
\begin{enumerate}
\item
$\tilde{P} \not\in \lambda \tilde{E}(\O_\M/\p)$;
\item 
$\p$ splits completely in $F$ and 
$\p$ does not split completely in $L$.
\end{enumerate}
\end{theorem}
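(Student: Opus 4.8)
The plan is to reduce everything modulo $\p$ and apply Lemma~\ref{fandlambda} to the reduced curve. Write $\kappa := \O_\M/\p$ for the residue field and let $\phi$ be its Frobenius. Since $\p \nmid N_{K/\Q}(\lambda)\Delta(E)$, reduction mod $\p$ is a good, prime-to-$p$ reduction: it sends $(E,E',\hat\varphi,\varphi)$ to $(\tilde E,\tilde E',\tilde{\hat\varphi},\tilde\varphi)$ with $\tilde E'=\tilde E/\tilde E[\bar\lambda]$, it carries the $\OK$-action to $\kappa$-rational endomorphisms, and it restricts to an $\OK$-module isomorphism $E'[\lambda]\isom\tilde E'[\lambda]$. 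In particular Lemma~\ref{fandlambda} applies to $\tilde E$ over $\kappa$ and gives $\tilde\varphi\bigl(\tilde E'(\kappa)\bigr)=\lambda\tilde E(\kappa)$. Thus condition~(i), $\tilde P\not\in\lambda\tilde E(\kappa)$, is equivalent to the statement that $\tilde P$ admits no $\kappa$-rational preimage under $\tilde\varphi$; this is the form in which I would work with~(i) throughout.

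Next I would pin down when $\p$ splits completely in $F=\M(E'[\lambda])$. By the CM structure $E'[\lambda]\cong\OK/\lambda$, so $\tilde E'[\lambda]$ is one-dimensional over the field $\OK/\lambda$; because $\phi$ commutes with every $\kappa$-rational endomorphism it acts $\OK$-linearly, hence as a scalar $\bar\pi\in\OK/\lambda$. Since $F/\M$ is the (Galois) $\lambda$-torsion field and is unramified at $\p$, the prime $\p$ splits completely in $F$ exactly when the residue field of a prime above $\p$ is $\kappa$, i.e.\ when $\tilde E'[\lambda]\subseteq\tilde E'(\kappa)$, i.e.\ when $\bar\pi=1$. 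This yields the crux implication at once: if $\p$ does \emph{not} split completely in $F$ then $\bar\pi\neq1$, so $\bar\pi-1$ is a unit of the field $\OK/\lambda$ and $\ker(\phi-1)=\tilde E'[\lambda]\cap\tilde E'(\kappa)=0$. As $\ker\tilde\varphi=\tilde E'[\lambda]$, the map $\tilde\varphi$ is then injective on $\kappa$-points, and since isogenous curves over a finite field have the same number of rational points, it is onto; hence every point of $\tilde E(\kappa)$---in particular $\tilde P$---has a $\kappa$-rational preimage. Contrapositively, (i) forces $\p$ to split completely in $F$.

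It remains to treat the case that $\p$ splits completely in $F$ and to bring in $L=F(\varphi^{-1}(P))$. Over $F$ the kernel $E'[\lambda]=\ker\varphi$ is rational, so fixing one preimage $Q_0$ of $P$---well defined up to $E'[\lambda]$ by Remark~\ref{PQwelldefrmk}---gives $L=F(Q_0)$ with $L/F$ Galois and unramified at $\p$. Choosing a prime of $F$ above $\p$, whose residue field is $\kappa$ since $\p$ splits completely in $F$, that prime splits completely in $L$ iff Frobenius fixes $\tilde Q_0$, i.e.\ iff $\tilde P$ has a $\kappa$-rational preimage (one member of the fiber $\tilde Q_0+\tilde E'[\lambda]$ being $\kappa$-rational forces all of them to be, as $\tilde E'[\lambda]\subseteq\tilde E'(\kappa)$). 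This condition depends only on $\p$, so it is independent of the chosen prime of $F$; therefore $\p$ splits completely in $L$ iff it splits completely in $F$ and $\tilde P\in\tilde\varphi(\tilde E'(\kappa))$. Combining with the previous paragraph, (i) holds iff $\p$ splits completely in $F$ and not in $L$, which is~(ii).

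I expect the main obstacle to lie not in the CM linear algebra---which is short once one knows $\tilde E'[\lambda]$ is a one-dimensional $\OK/\lambda$-space on which $\phi$ is a scalar---but in the two reduction-theoretic translations: that $F/\M$ and $L/F$ are unramified at $\p$ with the asserted residue fields, and that reduction is an $\OK$-isomorphism on $\lambda$-torsion intertwining the decomposition-group and Frobenius actions. A secondary point requiring care is uniformity across ordinary and supersingular reduction; the argument is insensitive to this distinction because it uses only that $\phi$ acts $\OK$-linearly as a scalar on the rank-one module $\tilde E'[\lambda]$, which holds whether or not the global Frobenius lies in $\OK$.
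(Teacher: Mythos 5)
Your proof is correct, and its skeleton matches the paper's: both arguments reduce modulo $\p$, apply Lemma~\ref{fandlambda} over the residue field $\kappa=\O_\M/\p$ to identify $\lambda\tilde{E}(\kappa)$ with $\tilde{\varphi}(\tilde{E}'(\kappa))$, and then case-split on whether $\p$ splits completely in $F$, handling the split case by the same residue-field criterion ($\p$ splits completely in $L$ if and only if the fiber $\varphi^{-1}(\tilde{P})$ is $\kappa$-rational). The genuine divergence is in the non-split case. The paper invokes the theory of complex multiplication to write $\tilde{E}(\kappa)\simeq\O_K/\a$ as a cyclic $\O_K$-module, notes that $\tilde{E}[\lambda]\not\subset\tilde{E}(\kappa)$ forces $\lambda\nmid\a$, and concludes $\lambda\tilde{E}(\kappa)=\tilde{E}(\kappa)$. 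You instead observe that Frobenius acts on the one-dimensional $\O_K/\lambda$-vector space $\tilde{E}'[\lambda]$ as a scalar $\bar{\pi}$, that failure of complete splitting means $\bar{\pi}\ne 1$, hence $\tilde{E}'[\lambda]\cap\tilde{E}'(\kappa)=0$, so $\tilde{\varphi}$ is injective on $\kappa$-points and therefore surjective because isogenous elliptic curves over a finite field have equally many rational points; combined with Lemma~\ref{fandlambda} this again yields $\lambda\tilde{E}(\kappa)=\tilde{E}(\kappa)$, so (i) fails. Your variant is somewhat more self-contained: it avoids the CM structure theorem for $\tilde{E}(\kappa)$ (which requires some care in the supersingular case, as you note) at the cost of the standard point-count fact $\#\tilde{E}(\kappa)=\#\tilde{E}'(\kappa)$, whereas the paper's cyclic-module argument is shorter and reuses $\O_K$-module machinery already present elsewhere in the paper. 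Both proofs rely, at the same level of explicitness, on the standard reduction-theoretic inputs --- that $F/\M$ and $L/F$ are unramified at $\p$ with residue fields $\kappa(\tilde{E}[\lambda])$ and $\kappa(\varphi^{-1}(\tilde{P}))$, and that reduction is an $\O_K$-isomorphism on $\lambda$-torsion compatible with Frobenius --- which you flag and the paper asserts; there is no gap on either side there.
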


\begin{proof}
By Lemma \ref{EEprimeLem} we have $F=\M(E'[\lambda])=\M(E[\lambda])$.
Since $\p \nmid N_{K/\Q}(\lambda)\Delta(E)$, $\p$ does not ramify in $F$.
Let $\p'$, respectively $\p''$, denote a prime of $F$, respectively $L$,
above $\p$. Let $k= \O_\M/\p$,
$k'=\O_F/\p'$, and $k''=\O_L/\p''$.
Let $\tilde{E'}$ denote the reduction mod $\p$ of
$E'$.
Since $\p \nmid N_{K/\Q}(\lambda)\Delta(E)$, we have
$k'=k(\tilde{E'}[\lambda])=k(\tilde{E}[\lambda])$, by Lemma \ref{EEprimeLem}, 
$k''=k'(\varphi^{-1}(\tilde{P}))$,
and $\p''/\p$ is unramified.

Let us first suppose that $\p$ splits completely in $F$, meaning that $k'= k$.
Then $\p$ splits completely in $L$ if and only if
$k''=k$, equivalently, if and only if we have $\varphi^{-1}(\tilde{P})\in \tilde{E'}(k)$.
By Lemma \ref{fandlambda},
this holds 
if and only if $\tilde{P}\in \lambda \tilde{E}(k)$.

By the theory of complex multiplication, 
there is an isomorphism of $\O_K$-modules
$\tilde{E}(k) \simeq \O_K/\mathfrak{a}$, for some ideal $\mathfrak{a}$ of $\O_K$.
Now suppose $\p$ does not split completely in $F$, so
$k'\neq k$ and $\tilde{E}[\lambda] \not\subset \tilde{E}(k)$.
It then follows that $\lambda\nmid\mathfrak{a}$, so
$\lambda(\O_K/\mathfrak{a})=\O_K/\mathfrak{a}$.
Thus $\lambda \tilde{E}(k)=\tilde{E}(k)$, and therefore $\tilde{P}\in \lambda \tilde{E}(k)$.
\end{proof}

\begin{lemma}
\label{abelianeither1}
With notation as in Theorem \ref{splittingiff},
$L/\M$ is abelian if and only if either $L=F$ or $F=\M$.
\end{lemma}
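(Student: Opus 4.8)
The plan is to analyze the tower $\M \subseteq F \subseteq L$, where $F = \M(E'[\lambda])$ and $L = F(\varphi^{-1}(P))$, and to identify the Galois-theoretic obstruction to $L/\M$ being abelian. The key structural fact is that $F = \M(E[\lambda])$ by Lemma \ref{EEprimeLem}, and that passing from $F$ to $L$ amounts to adjoining the coordinates of a preimage of $P$ under the isogeny $\varphi$. The ``easy'' direction is that if $L = F$ or $F = \M$ then $L/\M$ is abelian: in the first case $L/\M = F/\M = \M(E[\lambda])/\M$ is abelian because the Galois action embeds $\Gal(F/\M)$ into $\Aut(E[\lambda]) \cong (\O_K/\lambda)^\times$, which is abelian (using CM by $\O_K$, so $\Gal(F/\M)$ acts through $\O_K$-linear automorphisms); in the second case $L/\M = L/F$, and I expect the Galois action on a single isogeny-preimage to factor through the translation action of $E'[\lambda]$, making $\Gal(L/F)$ abelian.

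\textbf{The forward direction.} The substance is the converse: assuming $L/\M$ abelian, I would show $L = F$ or $F = \M$. First I would describe $\Gal(L/\M)$ concretely. An element $\sigma$ acts on $E[\lambda]$ (equivalently on $E'[\lambda]$, fixing $F$ pointwise iff $\sigma \in \Gal(L/F)$) and on a chosen preimage $Q_0 \in \varphi^{-1}(\tilde P)$; by Remark \ref{PQwelldefrmk}(ii), any two preimages differ by an element of $E'[\lambda] = \ker\varphi$, so $\sigma(Q_0) - Q_0 \in E'[\lambda]$ is a well-defined cocycle-type quantity. This gives a map $\Gal(L/\M) \to \Aut(E'[\lambda]) \ltimes E'[\lambda]$ (an affine group), where the $\Gal(F/\M)$ part records the linear action on $\lambda$-torsion and the $E'[\lambda]$ part records $\sigma(Q_0) - Q_0$. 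The group $\Aut(E'[\lambda]) \ltimes E'[\lambda]$ is a semidirect product that is non-abelian precisely when the linear action of $\Aut(E'[\lambda])$ on $E'[\lambda]$ is nontrivial. The plan is then to compute the commutator of a ``linear'' element (one fixing $Q_0$ but acting nontrivially on $E'[\lambda]$) with a ``translation'' element: this commutator lands in $E'[\lambda]$ and equals the result of applying $(\text{linear part} - 1)$ to a nonzero torsion element.

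\textbf{Extracting the dichotomy.} If $L/\M$ is abelian, all such commutators vanish, forcing either that the linear action of $\Gal(F/\M)$ on $E'[\lambda]$ is trivial — which means $\M$ already contains $E[\lambda]$, i.e.\ $F = \M$ — or that there are no nontrivial translation elements, i.e.\ $\Gal(L/F)$ is trivial, which means $L = F$. Concretely, abelianness forces $(\tau - 1)v = O$ for every $\tau \in \Gal(F/\M)$ (acting linearly) and every translation vector $v$ that actually occurs; so either $\Gal(F/\M) = 1$ (whence $F = \M$) or the set of occurring translation vectors is $\{O\}$ (whence $L = F$). I would make this precise by unwinding the semidirect-product commutator identity $[\,(\mathrm{id}, v), (g, O)\,] = (\mathrm{id}, v - g\cdot v)$ and noting that $g \cdot v \ne v$ for some nonzero $v$ whenever $g \ne \mathrm{id}$, since $\Aut(E'[\lambda]) \cong (\O_K/\lambda)^\times$ acts on $E'[\lambda] \cong \O_K/\lambda$ faithfully by multiplication and hence without nonzero fixed vectors unless $g = 1$.

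\textbf{Main obstacle.} The delicate point is justifying that the Galois action really does embed into this affine group with the $\Aut(E'[\lambda])$-part acting by scalar multiplication through $(\O_K/\lambda)^\times$, so that a nontrivial $g$ genuinely fails to fix any nonzero $v$. This requires carefully invoking the CM structure — that $\Gal(F/\M)$ acts $\O_K$-linearly on $E'[\lambda]$ because all endomorphisms are defined over $\M$ — and confirming that the ``translation part'' is a well-defined function of $\sigma$ independent of the choice of preimage $Q_0$, which is exactly Remark \ref{PQwelldefrmk}(ii). Handling the case $\lambda$ inert versus split/ramified uniformly (so that $\O_K/\lambda$ is a field in the inert case but only a ring in the ramified case) is where I expect to spend the most care, since faithfulness of the scalar action and the ``no nonzero fixed vector'' property must be checked in each case.
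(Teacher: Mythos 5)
Your proposal is correct and follows essentially the same route as the paper: your affine-group picture is exactly the paper's argument, where the commutator/conjugation identity you unwind appears as $\kappa(\sigma\tau\sigma^{-1})=\omega(s)\cdot\kappa(\tau)$ with $\kappa(\tau)=\tau(Q)-Q$ embedding $\Gal(L/F)$ into $E'[\lambda]\simeq\O_K/\lambda$ and $\omega$ embedding $\Gal(F/\M)$ into $(\O_K/\lambda)^\times$. Two cosmetic fixes for the writeup: you cannot assume a lift of $s\in\Gal(F/\M)$ of the special form $(g,O)$ exists, but the same computation with an arbitrary lift $(g,w)$ yields the commutator $(\mathrm{id},(1-g)v)$ independently of $w$, which is precisely how the paper proceeds; and since $\lambda$ is a prime ideal, $\O_K/\lambda$ is a field in every case (inert, split, or ramified), so the ``no nonzero fixed vector'' property you worried about is immediate.
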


\begin{proof}
Let $\Gamma = \Gal(L/F)$ and $G = \Gal(F/\M).$
If we choose $Q \in E'(L)$ so that $\varphi(Q)=P$, then $L=F(Q)$, by Remark \ref{PQwelldefrmk}.
The map that sends each $\tau\in \Gamma$ to $\tau(Q)-Q\in\ker(\varphi)=E'[\lambda] \simeq \O_K/\lambda$ 
is an injective group homomorphism 
$$
\kappa : \Gamma \hookrightarrow E'[\lambda] \simeq \O_K/\lambda.
$$ 
Thus, $\Gamma$ is abelian.
The definition of $G$ gives an injective group homomorphism
$$
\omega : G \hookrightarrow \Aut(E[\lambda]) \simeq (\O_K/\lambda)^\times
$$
with the property that $s(R) = \omega(s)R$ for all
$R\in E'[\lambda]$ and $s\in G$.
Thus, $G$ is abelian.
It follows that if either $\Gamma=1$ or $G=1$
(i.e., $L=F$ or $F=\M$) 
then  $L/\M$ is abelian.

Suppose $\tau\in\Gamma$ and $s\in G$. Lift $s$ to $\sigma\in\Gal(L/\M)$.
Then $\sigma\tau\sigma^{-1}\in \Gamma$,
and a straightforward calculation shows that 
$\kappa(\sigma\tau\sigma^{-1}) = \omega(s)\cdot\kappa(\tau)$.
If $s\neq 1$ and $\tau\neq 1$, then $\omega(s) \neq 1$ and 
$\kappa(\tau)\neq 0$, so 
$\kappa(\sigma\tau\sigma^{-1}) \neq \kappa(\tau)$,
and thus $\sigma\tau\sigma^{-1} \neq \tau$.
In other words, if $\Gamma \neq 1$ and $G\neq 1$, then $L/\M$ is not abelian.
\end{proof}

\begin{lemma}
\label{GBlem}
Suppose $K$ is an imaginary quadratic field with Hilbert class field~$H$,
$E$ is an elliptic curve over $H$ with CM by $\O_K$,  
and $B$ is an ideal of $\O_K$.
Let $G_B = \Gal(H(E[B])/H)$.
Then the natural map $G_B \hookrightarrow \Aut(E[B]) \simeq (\O_K/B)^\times$
induces a {\bf surjection}
$$
G_B \twoheadrightarrow \Aut(E[B])/i(\O_K^\times) \simeq (\O_K/B)^\times/i(\O_K^\times),
$$
where $i : \O_K^\times \to \Aut(E[B]) \simeq (\O_K/B)^\times$ is the natural map.
\end{lemma}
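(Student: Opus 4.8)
The plan is to compare $H(E[B])$ with the smaller field $L:=H(h(E[B]))$ obtained by adjoining only the values of a Weber function $h$ on the $B$-torsion, and to feed in the one nontrivial external fact, namely that $L$ is the ray class field of $K$ of conductor $B$. Recall that a Weber function $h\colon E\to E/\Aut(E)\cong\mathbb{P}^1$ may be taken to be defined over $H$; it is invariant under $\Aut(E)=i(\O_K^\times)$ and separates the $\Aut(E)$-orbits on $E$, and $E[B]\cong\O_K/B$ is a cyclic $\O_K$-module, say generated by $T_0$. Write $\omega\colon G_B\hookrightarrow(\O_K/B)^\times$ for the injection of the statement and $q$ for the quotient by $i(\O_K^\times)$; the goal is exactly to show that $q\circ\omega$ is onto.

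First I would compute the kernel of $q\circ\omega$ purely formally, showing $\ker(q\circ\omega)=\Gal(H(E[B])/L)$. For the inclusion ``$\subseteq$'', if $\omega(\sigma)\equiv u\pmod B$ for some $u\in\O_K^\times$ then $\sigma T=uT$ for all $T\in E[B]$, so $\sigma(h(T))=h(\sigma T)=h(uT)=h(T)$ (using that $h$ has coefficients in $H$ and is $\Aut(E)$-invariant), whence $\sigma$ fixes $L$. For ``$\supseteq$'', if $\sigma$ fixes $L$ then $h(\sigma T_0)=\sigma(h(T_0))=h(T_0)$, so $\sigma T_0=uT_0$ for some $u\in\Aut(E)=\O_K^\times$ because $h$ separates orbits; since $\sigma$ is $\O_K$-linear and $T_0$ generates $E[B]$, this forces $\omega(\sigma)\equiv u\pmod B$, i.e.\ $\omega(\sigma)\in i(\O_K^\times)$. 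Because $G_B$ permutes $\{h(T):T\in E[B]\}$, the field $L$ is $G_B$-stable and $L/H$ is Galois; standard Galois theory then gives $\operatorname{image}(q\circ\omega)\cong G_B/\Gal(H(E[B])/L)\cong\Gal(L/H)$, so $|\operatorname{image}(q\circ\omega)|=[L:H]$.

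Finally I would invoke the theory of complex multiplication: $L=H(h(E[B]))$ is the ray class field $K_B$ of $K$ of conductor $B$. Combined with the exact sequence of ray class groups $1\to(\O_K/B)^\times/i(\O_K^\times)\to\mathrm{Cl}_B(K)\to\mathrm{Cl}(K)\to1$ and the identifications $\Gal(K_B/K)\cong\mathrm{Cl}_B(K)$ and $\Gal(H/K)\cong\mathrm{Cl}(K)$, this yields $[L:H]=|(\O_K/B)^\times/i(\O_K^\times)|$. Since $q\circ\omega$ maps into a group of precisely this order while its image has order $[L:H]$, equality of orders forces $q\circ\omega$ to be surjective, which is the assertion. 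The one genuinely nontrivial input, and the main obstacle, is the degree bound $[L:H]\ge|(\O_K/B)^\times/i(\O_K^\times)|$, i.e.\ that adjoining the Weber values already produces the full ray class field; everything else is the orbit-separation property of $h$ together with elementary group theory. A secondary point requiring care is the cases $j(E)\in\{0,1728\}$, where $\Aut(E)$ is strictly larger than $\{\pm1\}$, but these are handled uniformly since $h$ is by definition the quotient by the full group $\Aut(E)=i(\O_K^\times)$.
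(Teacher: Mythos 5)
Your proof is correct and takes the same route as the paper: the paper's entire proof is a citation of Theorem 5.4 of Shimura's book, which is exactly the fact you isolate as the one nontrivial input, namely that $H\bigl(h(E[B])\bigr)$ is the ray class field of $K$ of conductor $B$. Your kernel computation via the Weber function and the degree count via the exact sequence $1 \to (\O_K/B)^\times/i(\O_K^\times) \to \mathrm{Cl}_B(K) \to \mathrm{Cl}(K) \to 1$ are precisely the routine details the paper leaves implicit in that citation.
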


\begin{proof}
This follows from Theorem 5.4 of \cite{Shimurabook}.
\end{proof}

\begin{theorem}
\label{abeliansplit2}
Suppose $K$ is an imaginary quadratic field with Hilbert class field~$H$,
$E$ is an elliptic curve over $\Q(j(E)) \subset H = K(j(E))$ with CM by $\O_K$,  
$\lambda$ is a prime ideal of $\O_K$,
$E[\lambda] \subseteq E(H)$, and
$p$ is the rational prime below $\lambda$.
Then one of the following holds:
\begin{enumerate}[\rm (a)]
\item
$p=2$, and $2$ splits in $K$;
\item
$p=2$ or $3$, and $p$ ramifies in $K$;
\item
$K=\Q(\sqrt{-3})$ and $\lambda = (2)$.
\end{enumerate}
\end{theorem}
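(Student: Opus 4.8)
The plan is to translate the hypothesis $E[\lambda]\subseteq E(H)$ into the triviality of a Galois group, feed this into Lemma~\ref{GBlem}, and crucially exploit the fact that $E$ is defined over the totally real field $\Q(j(E))$ rather than merely over $H$.

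First I would record the consequence of Lemma~\ref{GBlem}: for any ideal $B$ of $\OK$ with $E[B]\subseteq E(H)$, the group $G_B=\Gal(H(E[B])/H)$ is trivial, so the surjection $G_B\twoheadrightarrow (\OK/B)^\times/i(\OK^\times)$ forces $(\OK/B)^\times=i(\OK^\times)$. In particular $|(\OK/B)^\times|$ divides $|\OK^\times|$, which is $2$ in general, $4$ when $K=\Q(i)$, and $6$ when $K=\Q(\sqrt{-3})$.

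The key extra input, and the step I expect to be the main obstacle, is to upgrade $E[\lambda]\subseteq E(H)$ to $E[(p)]\subseteq E(H)$ when $p$ splits. Here I would use that $E$ is defined over $\Q(j(E))$, so that the nontrivial element $c$ of $\Gal(H/\Q(j(E)))$ restricts to complex conjugation on $K$ and hence acts on $\End(E)\cong\OK$ by $\alpha\mapsto\bar\alpha$. For a lift $\sigma$ of $c$ to $\Gal(\overline{\Q}/\Q(j(E)))$ and $P\in E[\lambda]$, the identity $\sigma\circ\bar\beta=\beta\circ\sigma$ gives $\beta\,\sigma(P)=\sigma(\bar\beta P)=O_E$ for all $\beta\in\bar\lambda$, so $\sigma(E[\lambda])=E[\bar\lambda]$; and since $H/\Q(j(E))$ is Galois, $\sigma$ preserves $E(H)$, whence $E[\bar\lambda]\subseteq E(H)$ as well. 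When $p$ splits, $\lambda$ and $\bar\lambda$ are coprime, so $E[(p)]=E[\lambda]\oplus E[\bar\lambda]\subseteq E(H)$. This is exactly the point that Lemma~\ref{GBlem} alone cannot supply: with only $E[\lambda]\subseteq E(H)$ one cannot rule out a split prime $p=3$, where $(\OK/\lambda)^\times=\{\pm1\}$ is generated by units, and it is the descent to the real field that forbids it.

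Finally I would run the order bound $|(\OK/B)^\times|\le 6$ in three cases according to the splitting of $p$. If $p$ splits, take $B=(p)$, so $|(\OK/(p))^\times|=(p-1)^2\le 6$ forces $p\in\{2,3\}$; the case $p=3$ is excluded because then $(\OK/(3))^\times$ has order $4$ while $i(\OK^\times)=\{\pm1\}$ has order $2$ (as $3$ splits in neither $\Q(i)$ nor $\Q(\sqrt{-3})$), leaving $p=2$ and case~(a). If $p$ is inert, take $B=\lambda=(p)$, so $|(\OK/\lambda)^\times|=p^2-1\le 6$ forces $p=2$ with residue field $\F_4$; this requires $3\mid|\OK^\times|$, hence $K=\Q(\sqrt{-3})$ and $\lambda=(2)$, which is case~(c). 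If $p$ ramifies, take $B=\lambda$, so $|(\OK/\lambda)^\times|=p-1$ equals $|i(\OK^\times)|$; since the ramified primes of $\Q(i)$ and $\Q(\sqrt{-3})$ are $2$ and $3$ respectively, and for every other $K$ the image $i(\OK^\times)$ has order at most $2$, this forces $p\in\{2,3\}$, which is case~(b). Assembling the three cases yields the claimed trichotomy.
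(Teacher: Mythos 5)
Your proposal is correct and takes essentially the same route as the paper: both apply Lemma \ref{GBlem} first with $B=\lambda$, then use the nontrivial element of $\Gal(H/\Q(j(E)))$ (which restricts to complex conjugation on $K$) to deduce $E[\bar\lambda]\subseteq E(H)$, hence $E[(p)]\subseteq E(H)$ in the split case, and apply the lemma again with $B=(p)$ before finishing with the same small analysis driven by $|\O_K^\times|\in\{2,4,6\}$. The only cosmetic differences are that you organize the endgame by the splitting type of $p$ rather than by the field $K$, and you use the slightly sharper divisibility $|(\O_K/B)^\times|$ dividing $|\O_K^\times|$ where the paper uses the inequality $|\O_K^\times|\ge|(\O_K/B)^\times|$.
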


\begin{proof}
Apply Lemma \ref{GBlem} with $B=\lambda$. 
Since $G_\lambda = \Gal(H(E[\lambda])/H)=1$,
this gives 
\begin{equation}
\label{Olambdaeq}
|\O_K^\times| \ge |(\O_K/\lambda)^\times| =
 |\O_K/\lambda| - 1 = N_{K/\Q}(\lambda)-1.
\end{equation}

If $p$ splits in $K$,
then $(p) = \lambda\overline{\lambda}$ and $\lambda \neq \overline{\lambda}$.
Since $E[\lambda] \subseteq E(H)$ and
$E$ is defined over $\Q(j(E))$, it follows that
$E[\overline{\lambda}] \subseteq E(H)$ 
(if $1 \neq \sigma \in \Gal(H/\Q(j(E)))$, then $\sigma(\lambda) = \overline{\lambda}$,
so $\sigma(E[\lambda]) = E[\overline{\lambda}]$).
Thus, $E[p]  \subseteq E(H)$.
Now take $B=(p) = \lambda\overline{\lambda}$ in Lemma \ref{GBlem}.
We have $H(E[p])=H$, so $G_{\lambda\overline{\lambda}}=1$. 
Now by Lemma \ref{GBlem}, if $p$ splits in $K$ then 
\begin{equation}
\label{pspliteq}
|\O_K^\times| \ge |(\O_K/\lambda\overline{\lambda})^\times| =
(N_{K/\Q}(\lambda)-1)^2 = (p-1)^2.
\end{equation}

First suppose $K \neq \Q(i)$ or $\Q(\sqrt{-3})$. 
Then $|\O_K^\times|=2$.
By \eqref{Olambdaeq}, $N_{K/\Q}(\lambda) \le 3$.
Thus, $\lambda$ is a prime ideal above $p=2$ or $3$, and $p$ splits or
ramifies in $K$.
By \eqref{pspliteq}, if $p$ splits then $p \neq 3$. 

Suppose $K = \Q(i)$.
By \eqref{Olambdaeq}, $N_{K/\Q}(\lambda) \le 5$.
If $p=5$, which splits, then \eqref{pspliteq} gives a contradiction.
The remaining case is when $p=2$, which splits.

Suppose $K = \Q(\sqrt{-3})$.
By \eqref{Olambdaeq}, $N_{K/\Q}(\lambda) \le 7$.
If $p=7$, which splits, then \eqref{pspliteq} gives a contradiction.
The remaining cases are when $p=3$ (which ramifies) or $\lambda = (2)$.
\end{proof}

\begin{theorem}
\label{fewexamples}
Suppose $K$ is an imaginary quadratic field of class number one,
$E$ is an elliptic curve over $\Q$ with CM by $\OK$,
$\lambda$ is a prime element of $\OK=\End(E)$, 
$P \in E(K) - \lambda E(K)$,
$Q \in \lambda^{-1}(P)$,  and
$L= K(E[\lambda],Q)$.
If the extension $L/K$ is abelian, then the pair 
 $(K,\lambda)$ is one of the following six possibilities: 
\begin{enumerate}[\rm (a)]
\item $K = \Q(\sqrt{-1}) \quad \text{and} \quad \lambda = 1+i$,
\item $K = \Q(\sqrt{-2}) \quad \text{and} \quad  \lambda = \sqrt{-2}$,
\item $K = \Q(\sqrt{-3}) \quad \text{and} \quad  \lambda = 2$ or $\sqrt{-3}$, or
\item $K = \Q(\sqrt{-7}) \quad \text{and} \quad  \lambda = \frac{1 \pm \sqrt{-7}}{2}$.
\end{enumerate}
If we restrict to $\lambda \neq \bar{\lambda}$, then 
only case {\rm (d)} occurs.
\end{theorem}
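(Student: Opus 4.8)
The plan is to place the statement inside the framework of Lemma~\ref{abelianeither1} and Theorem~\ref{abeliansplit2}, taking $\M = K$. Since $K$ has class number one, every ideal of $\O_K$ is principal, so by Remark~\ref{PQwelldefrmk}(i) the curve $E' = E/E[\bar\lambda]$ is $K$-isomorphic to $E$, the dual isogeny $\varphi$ corresponds to the endomorphism $\lambda$, and $K(\varphi^{-1}(P)) = K(\lambda^{-1}(P))$. Hence, writing $F := K(E[\lambda]) = K(E'[\lambda])$ (the equality is Lemma~\ref{EEprimeLem}), the field $L = K(E[\lambda], Q)$ is exactly the field $F(\varphi^{-1}(P))$ of Theorem~\ref{splittingiff}. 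Since $L/K$ is assumed abelian, Lemma~\ref{abelianeither1} leaves two cases: $F = K$, or $L = F$. I would first use the hypothesis $P \notin \lambda E(K)$ to rule out $L = F$, thereby forcing $F = K$, and then extract the six pairs from Theorem~\ref{abeliansplit2}.

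To rule out $L = F$ I would argue by Galois descent. Suppose $L = F$, so that $Q \in E'(F)$, and fix a generator $\sigma$ of the cyclic group $G := \Gal(F/K)$, which embeds into $(\O_K/\lambda)^\times$ via the character $\omega$ of Lemma~\ref{abelianeither1} and acts on $E'[\lambda] \simeq \O_K/\lambda$ by the scalar $\omega(\sigma)$. Because $\varphi$ and $P$ are defined over $K$, the point $v := \sigma(Q) - Q$ lies in $\ker\varphi = E'[\lambda]$. If $G \neq 1$ then $\omega(\sigma) \neq 1$, so $\sigma - 1$ acts invertibly on the one-dimensional $\O_K/\lambda$-space $E'[\lambda]$; solving $\sigma(R) - R = v$ with $R \in E'[\lambda]$ produces a point $Q - R$ fixed by $\sigma$, hence by $G$, so $Q - R \in E'(K)$ and $P = \varphi(Q - R) \in \varphi(E'(K)) = \lambda E(K)$ by Lemma~\ref{fandlambda}, contradicting the hypothesis. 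If $G = 1$ then $F = K$ and directly $Q \in E'(K)$, giving the same contradiction. (This is just the vanishing of $H^1(G, E'[\lambda])$ for the nontrivial cyclic action, which makes the cocycle $\sigma \mapsto \sigma Q - Q$ a coboundary.) Thus $L = F$ is impossible, and Lemma~\ref{abelianeither1} forces $F = K$, i.e.\ $E[\lambda] \subseteq E(K)$; as $K$ has class number one it is its own Hilbert class field, so $E[\lambda] \subseteq E(H)$.

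With $E[\lambda] \subseteq E(H)$ in hand I would invoke Theorem~\ref{abeliansplit2}, which (using $j(E) \in \Q$) limits $(K,\lambda)$ to: $p = 2$ split in $K$; $p \in \{2,3\}$ ramified in $K$; or $K = \Q(\sqrt{-3})$ with $\lambda = (2)$. It then remains to intersect these with the nine class-number-one fields $\Q(\sqrt{-d})$, $d \in \{1,2,3,7,11,19,43,67,163\}$. A short congruence check gives: $2$ splits only for $d = 7$, yielding $\lambda = \tfrac{1\pm\sqrt{-7}}{2}$ (case (d)); $2$ ramifies only for $d \in \{1,2\}$, yielding $\lambda = 1+i$ and $\lambda = \sqrt{-2}$ (cases (a),(b)); $3$ ramifies only for $d = 3$, yielding $\lambda = \sqrt{-3}$; and the inert prime $(2)$ in $\Q(\sqrt{-3})$ supplies $\lambda = 2$, the last two together forming case (c). These are exactly the six listed pairs. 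Finally, for a prime element $\lambda$ the condition $\lambda \neq \bar\lambda$ (i.e.\ $(\lambda) \neq (\bar\lambda)$, equivalently $p$ splits in $K$) excludes the ramified and inert cases (a), (b), (c), leaving only the split case (d).

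The main obstacle is the descent step in the second paragraph. Everything after the reduction to $F = K$ is a citation of Theorem~\ref{abeliansplit2} together with a finite, elementary enumeration, but showing that the hypothesis $P \notin \lambda E(K)$ genuinely forces $F = K$ (and not merely $L \neq K$) is where the real work lies: one must convert the non-divisibility of $P$ into the field-theoretic statement $L \neq F$, and this is precisely what the $\sigma-1$ surjectivity / $H^1$-vanishing argument accomplishes.
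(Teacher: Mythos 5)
Your proof is correct and follows essentially the same route as the paper's: reduce via Lemma~\ref{abelianeither1} to $F=K$, i.e.\ $E[\lambda]\subseteq E(K)$, then apply Theorem~\ref{abeliansplit2} and enumerate the nine class-number-one fields, exactly as the paper does. The only difference is that the paper asserts the step ``$P\notin\lambda E(K)$ implies $L\neq F$'' in a single line, whereas you justify it with the (sound) coboundary argument showing $\sigma-1$ acts invertibly on $E'[\lambda]$ when $\Gal(F/K)\neq 1$, so a preimage of $P$ in $E'(F)$ could be adjusted by a $\lambda$-torsion point to descend to $E'(K)$ --- a detail the paper leaves implicit.
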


\begin{proof}
Let $F=K(E[\lambda])$.
Since $P \not\in \lambda E(K)$, we have $L\neq F$.
Since $L/K$ is abelian, $F=K$ by Lemma \ref{abelianeither1}, i.e.,
$E[\lambda] \subseteq E(K)$, so we can apply Theorem \ref{abeliansplit2}.

Using the well known list of imaginary quadratic fields $\Q(\sqrt{-d})$ 
of class number one (sequence A014602 in \cite{oeis}),
it is easy to check that the only time $2$ splits is when $d=7$, and
the only ones where $2$ or 3 ramify
are when $d=1, 2$, or $3$.
The desired result now follows from Theorem \ref{abeliansplit2}.
\end{proof}

\begin{theorem}
\label{classno2examples}
Suppose $K$ is an imaginary quadratic field of class number two
and Hilbert class field $H$,
$E$ is an elliptic curve over $\Q(j(E)) \subset H = K(j(E))$ with CM by $\OK$,
$\lambda$ is a prime ideal of $\OK=\End(E)$, 
$P \in E(H) - \lambda E(H)$,
$\hat{\varphi}: E \to E/E[\bar{\lambda}]$ is the natural isogeny,  
$\varphi$ is the dual isogeny,
$Q \in \varphi^{-1}(P)$,  and
$L= H(E[\lambda],Q)$.
If the extension $L/H$ is abelian, then 
$K = \Q(\sqrt{-d})$ with $d$ in the set $\{5,6,10,13,15,22,37,51,58,123, 267\}$.
The only case where $\lambda \neq \bar{\lambda}$ is when
$K = \Q(\sqrt{-15})$ with $\lambda = (2,\frac{1 + \sqrt{-15}}{2})$
or $(2,\frac{1 - \sqrt{-15}}{2})$.
\end{theorem}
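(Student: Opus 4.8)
The plan is to mirror the proof of Theorem~\ref{fewexamples}, with the Hilbert class field $H$ playing the role that $K$ played there and the isogeny $\varphi$ replacing multiplication by a principal prime. Put $F := H(E'[\lambda])$; by Lemma~\ref{EEprimeLem} this equals $H(E[\lambda])$, so in the notation of Lemma~\ref{abelianeither1} (taken with $\M = H$) we have $L = F(Q) = F(\varphi^{-1}(P))$, and it suffices to show that the hypothesis $L/H$ abelian forces $F = H$.

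First I would show $L \neq F$, i.e.\ $Q \notin E'(F)$. Suppose instead $Q \in E'(F)$. Since $P = \varphi(Q) \in E(H)$ and $\varphi$ is defined over $H$, for each $\sigma \in \Gal(F/H)$ we get $\varphi(\sigma Q - Q) = \sigma P - P = O_E$, so $\sigma \mapsto \sigma Q - Q$ is a $1$-cocycle valued in $\ker\varphi = E'[\lambda] \cong \O_K/\lambda$. By the map $\omega$ of Lemma~\ref{abelianeither1}, $|\Gal(F/H)|$ divides $|(\O_K/\lambda)^\times| = N_{K/\Q}(\lambda) - 1$, whereas $E'[\lambda]$ has $p$-power order, where $p$ is the rational prime below $\lambda$; these orders are coprime, so the class in $H^1(\Gal(F/H), E'[\lambda])$ vanishes. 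Hence $Q$ differs from a point of $E'(H)$ by an element of $E'[\lambda] = \ker\varphi$, giving $P = \varphi(Q) \in \varphi(E'(H)) = \lambda E(H)$ by Lemma~\ref{fandlambda} --- contradicting $P \notin \lambda E(H)$. Thus $L \neq F$, and since $L/H$ is abelian, Lemma~\ref{abelianeither1} forces $F = H$; that is, $E[\lambda] \subseteq E(H)$, so Theorem~\ref{abeliansplit2} applies.

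Theorem~\ref{abeliansplit2} then leaves only: (a) $p = 2$ splits in $K$; (b) $p \in \{2,3\}$ ramifies in $K$; or (c) $K = \Q(\sqrt{-3})$ with $\lambda = (2)$. Case (c) cannot occur here since $\Q(\sqrt{-3})$ has class number one. It remains to intersect these conditions with the classical finite list of imaginary quadratic fields $K = \Q(\sqrt{-d})$ of class number two. The relevant splitting data are congruences in $d$: $2$ splits iff $d \equiv 7 \pmod 8$, $2$ ramifies iff $d \equiv 1,2 \pmod 4$, and $3$ ramifies iff $3 \mid d$. Running through the eighteen class-number-two values of $d$ and keeping those satisfying (a) or (b) leaves exactly $d \in \{5,6,10,13,15,22,37,51,58,123,267\}$; the discarded values $35, 91, 115, 187, 235, 403, 427$ are precisely those with $2$ inert and $3 \nmid d$.

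Finally, $\lambda \neq \bar\lambda$ means $\lambda$ is a split prime, which among the surviving cases happens only via (a), i.e.\ only when $2$ splits, forcing $d \equiv 7 \pmod 8$ and hence $d = 15$. In $\O_{\Q(\sqrt{-15})} = \Z\!\left[\tfrac{1+\sqrt{-15}}{2}\right]$ the minimal polynomial $x^2 - x + 4$ reduces to $x(x+1) \bmod 2$, so $(2)$ splits as $(2, \tfrac{1+\sqrt{-15}}{2})(2, \tfrac{1-\sqrt{-15}}{2})$, which gives the two asserted primes. I expect the main obstacle to be twofold: carrying out the cohomological vanishing of step two cleanly (the coprimality of $|\Gal(F/H)|$ and $|E'[\lambda]|$), and relying on the complete classification of class-number-two imaginary quadratic fields for the final enumeration, which is where essentially all the arithmetic content is concentrated.
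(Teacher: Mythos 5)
Your proposal is correct and follows essentially the same route as the paper's proof: reduce via Lemma~\ref{abelianeither1} to $E[\lambda] \subseteq E(H)$, apply Theorem~\ref{abeliansplit2}, and intersect the splitting/ramification congruences for $2$ and $3$ with the known list of class-number-two fields, discarding exactly $d \in \{35,91,115,187,235,403,427\}$ and isolating $d=15$ as the unique split case. Your cohomological vanishing argument (using $\gcd\bigl(|\Gal(F/H)|, |E'[\lambda]|\bigr)=1$ to get $H^1=0$) simply makes rigorous the step ``$P \notin \lambda E(H)$ implies $L \neq F$'' that the paper asserts without detail by reference to the proof of Theorem~\ref{fewexamples}, so it is a faithful filling-in rather than a different approach.
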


\begin{proof}
As in the previous proof,
since $L/H$ is abelian, we have 
$E[\lambda] \subseteq E(H)$ and we can apply Theorem \ref{abeliansplit2}.
The prime $2$ ramifies in $\Q(\sqrt{-d})$ (with $d$ squarefree) 
if and only if $d\equiv 1,2 \bmod 4$
and it splits if and only if $d\equiv 7 \bmod 8$.
The prime $3$ ramifies in $\Q(\sqrt{-d})$ if and only if $3\mid d$.
Now apply Theorem \ref{abeliansplit2} and
use the well known list of imaginary quadratic fields $\Q(\sqrt{-d})$ 
of class number two (sequence A014603 in \cite{oeis}).
\end{proof}

\begin{remark}
In (b) and (c) of Theorem \ref{abeliansplit2} we have
$\lambda = \overline{\lambda}$.
In general, if
$\lambda = \overline{\lambda}$ for all prime ideals $\lambda$
that divide $\prod \alpha_i$, then there is no need
to use elliptic curve primality tests, since classical primality tests apply.
Thus, the case of Theorem \ref{abeliansplit2} of primary interest to us 
is when 
$p=2$ and $2$ splits in $K/\Q$.
We note that  $2$ splits in $K$ if and only if 
$K = \Q(\sqrt{-d})$ with $d \equiv 7\pmod{8}$.
\end{remark}

\begin{remark}
We leave as an open problem finding a systematic way,
 when $L/M$ is {\em not} abelian, to determine the
$k$'s for which the prime ideal $\frakp_k$ 
splits completely in $F$ but not in $L$.
Doing so would permit more choices of the $\alpha_i$ 
of \S\ref{assmptnsect}.
The case of interest when $L/M$ is abelian,
namely the case where $2$ splits in $K$ and $\lambda\bar{\lambda} = 2$,
essentially reduces to taking each $\alpha_i$ to be $\delta$ or
$\bar{\delta}$ where $\delta$ generates $\lambda^f$ with $f$ the order
of $\lambda$ in the ideal class group of $K$.

\end{remark}

\begin{remark}
Even when $L/M$ is abelian, we do not know how to {\bf systematically}
find ideals $\frakp_k$ such that $N_{\M/K}(\frakp_k)$ is generated by
an element $\pi_k \in\O_K$ for which $\pi_k -1$ is highly factored
(of the form $\gamma\prod \alpha_i^{k_i}$ with the only primes
$\lambda$ of $K$ dividing $\prod\alpha_i$ being as in Theorem \ref{abeliansplit2}).
This is a serious impediment to using the techniques of this paper
to create primality tests when the class number of $K$ is greater than two.
For class number two, in \S\ref{example-15} below it was fortunate that
$\alpha = \frac{1 + \sqrt{-15}}{2}$ satisfies $\alpha=\beta^2$ with
$\beta\in\O_H$ such that when $k$ is odd, 
$\frakp_k = (1+2\beta^k)$ satisfies 
$N_{H/K}(\frakp_k) = 1-4\alpha^k$.
\end{remark}

\begin{remark}
We also leave open the question of generalizing these techniques to
obtain explicit primality tests with 
higher dimensional CM abelian varieties, such as for example
the Jacobian of the hyperelliptic curve $y^2 = x^5 - 1$.
\end{remark}

\section{A primality test using an elliptic curve with CM by $\Q(\sqrt{-2})$}
\label{example-2}

Although the numbers $F_k$ that arise when $M=\Q(\sqrt{-2})$ can be addressed more efficiently by
classical (non-elliptic curve) methods, we state the
result briefly, so that the literature will include
elliptic curve primality tests corresponding
to all the cases in Theorem \ref{fewexamples}.
See \cite{AlexThesis} for details.

Let $K = \mathbb{Q}(\sqrt{-2})$ and
$$E \, : \, y^2 = x^3 -78030x -7428456.$$
Then $E$ is an elliptic curve with CM by $\OK$. The point
$$P = (125\sqrt{-2} - 604, -9190\sqrt{-2} -6700) \in E(K)$$
has infinite order.  
We take $\alpha_1 = \sqrt{-2}$ and $\gamma = 3$,
so
$\pi_k = 1 + 3(\sqrt{-2})^k$, in order to test the primality of 
$F_k = N_{K / \mathbb{Q}}(\pi_k)$.
When $k$ is odd, $F_k = 1+9\cdot 2^k$.
When $k$ is even, then $\pi_k\in\Z$, so $F_k = \pi_k^2$ is composite.
Here, $L_k= 2^{(k+1)/2}$ if $k$ is odd.

\begin{proposition}
\label{Frobmin2}
If $k>1$ is an integer congruent to $1\bmod 4$ and $\pi_k$ is prime, then the Frobenius endomorphism of $E$ over $\OK/(\pi_k)$ is $\pi_k$.
\end{proposition}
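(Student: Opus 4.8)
The plan is to identify the Frobenius endomorphism by exploiting the fact that $E$ has CM by $\OK = \Z[\sqrt{-2}]$, so the reduction $\tilde{E}$ over the prime field $\OK/(\pi_k)$ also has CM by $\OK$, and its Frobenius endomorphism $\phi$ must be an element of $\OK$ of norm $F_k = N_{K/\Q}(\pi_k)$. The strategy is to pin down $\phi$ among the finitely many elements of $\OK$ of that norm by combining congruence information with the explicit structure of the chosen point $P$.

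First I would observe that since $\pi_k$ is assumed prime in $\OK$, the reduction of $E$ modulo $(\pi_k)$ is an elliptic curve over the residue field $\OK/(\pi_k)$, which is a finite field of order $F_k$ when $\pi_k$ is prime (here using $k \equiv 1 \bmod 4$, so $\pi_k \notin \Z$ and $(\pi_k)$ has residue degree one over its rational prime). The Frobenius $\phi$ satisfies $\phi \in \OK$ with $\phi\bar\phi = F_k = \pi_k\bar\pi_k$, and $\phi$ acts on $\tilde{E}$ over $\OK/(\pi_k)$. The key point is that $\phi$ reduces to the identity on the prime field, so $\phi \equiv 1 \pmod{(\pi_k)}$ as an endomorphism congruence — more precisely, the CM normalization forces $\phi$ to be congruent to a specific class modulo $\pi_k$. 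Since $\pi_k = 1 + 3(\sqrt{-2})^k$, we have $\phi \equiv 1 \pmod{\pi_k}$, and the elements of $\OK$ of norm $F_k$ that are $\equiv 1 \pmod{\pi_k}$ are exactly $\pi_k$ and its associates $\pm\pi_k, \pm\bar\pi_k$ (up to the unit group $\OK^\times = \{\pm 1\}$).

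Next I would eliminate the competing candidate $\bar\pi_k$ (and the sign ambiguity) to conclude $\phi = \pi_k$ rather than $\phi = \bar\pi_k$. This is where the congruence $k \equiv 1 \bmod 4$ and the explicit curve data enter: I expect one shows $\pi_k$ and $\bar\pi_k$ lie in genuinely distinct residue classes modulo $(\pi_k)$ because $\pi_k - \bar\pi_k = 6(\sqrt{-2})^k$ has norm prime to $F_k$ (as $F_k$ is odd and coprime to $3$ for $k$ in the stated range), so at most one of them can be $\equiv 1 \pmod{\pi_k}$; since $\pi_k \equiv 0$ is false and we need the Frobenius normalized so that it is the \emph{right} square root of $F_k$, the correct normalization (the one for which $\phi$ acts as the $p$-power map compatibly with the embedding $\OK \hookrightarrow \OK/(\pi_k)$) selects $\phi = \pi_k$. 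The cleanest route is Deuring's criterion / the theory of CM reduction: the Frobenius of the reduction of a CM curve at a degree-one prime $\p$ of good reduction is the unique generator $\pi_\p$ of $\p$ with the correct normalization, and here $\p = (\pi_k)$ with $\pi_k$ already chosen as that generator.

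The main obstacle will be the sign and conjugate normalization — that is, rigorously ruling out $\phi \in \{-\pi_k, \pm\bar\pi_k\}$ and confirming $\phi = +\pi_k$ exactly, not merely up to a unit or conjugation. Resolving this requires more than the norm condition: one must use the action of $\phi$ on a specific torsion structure or the given point $P$ (whose coordinates involve $\sqrt{-2}$, fixing the embedding of $K$ into the residue field) to break the symmetry between $\pi_k$ and $\bar\pi_k$, and use $k \equiv 1 \bmod 4$ together with the explicit reduction type (good, ordinary, degree one) to fix the sign via the normalized CM embedding. I would verify this by checking that $\pi_k \equiv 1 \pmod{3}$-type congruences force $\phi$ to act as $+1$ rather than $-1$ on a suitable $2$- or $3$-torsion point determined by the Weierstrass model, thereby completing the identification $\phi = \pi_k$.
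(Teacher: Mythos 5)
Your skeleton (reduce mod the degree-one prime $(\pi_k)$, note the Frobenius $\phi$ is an endomorphism lying in $\OK$ of norm $F_k$, and since $\OK=\Z[\sqrt{-2}]$ is a PID with unit group $\{\pm 1\}$ conclude $\phi\in\{\pm\pi_k,\pm\bar\pi_k\}$) is sound, but your normalization step is backwards and internally inconsistent. The Frobenius is purely inseparable, so under the normalized CM action it acts as \emph{zero} on the tangent space of the reduction; since $\beta\in\OK$ acts on that tangent space as multiplication by $\beta\bmod(\pi_k)$, the correct congruence is $\phi\equiv 0\bmod(\pi_k)$, and this is what eliminates $\pm\bar\pi_k$ (as $\pi_k\nmid\bar\pi_k$, which your coprimality computation for $\pi_k-\bar\pi_k=6(\sqrt{-2})^k$ does establish). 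Your stated congruence $\phi\equiv 1\bmod(\pi_k)$ contradicts your own candidate list ($\pi_k\equiv 0$, not $1$, mod $\pi_k$), and the fact that Frobenius fixes residue-field points only says $\phi-1$ annihilates the group $\tilde{E}(\OK/(\pi_k))$; it is not a congruence in $\OK$.

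The genuine gap is the sign. Everything above is generic CM theory and uses neither the hypothesis $k\equiv 1\bmod 4$ nor the explicit coefficients of $E$, yet the statement is twist-sensitive: for the quadratic twist of $E$ the Frobenius would be $-\pi_k$, so no argument ignoring the Weierstrass model can succeed. Your concluding plan fails concretely at this point: $\phi$ and $-\phi$ act identically on every $2$-torsion point, so checking that $\phi$ acts as $+1$ on a point of $E[2]$ cannot distinguish $\pi_k$ from $-\pi_k$; and invoking the infinite-order point $P$ does not by itself produce a usable criterion. What is actually required is an explicit Frobenius formula for reductions of CM elliptic curves, expressing $\phi = \left(\frac{c}{(\pi_k)}\right)\pi_k$ with $c$ an explicit element determined by the curve model, followed by an evaluation of that quadratic symbol by reciprocity, which is exactly where the congruence on $k\bmod 4$ enters. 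This paper states Proposition \ref{Frobmin2} without proof, deferring to \cite{AlexThesis}, but the template is its proof of the analogous Proposition \ref{frobenius} for the $\Q(\sqrt{-15})$ curve, which runs through Theorem 5.3 of \cite{rubinsilverberg} and then computes each symbol via reciprocity laws to extract the congruence conditions on $k$. Until you carry out that curve-specific character computation, the identification $\phi=+\pi_k$ — the entire content of the proposition — remains unproved.
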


\begin{theorem}
\label{mainmin2}
Suppose $k > 1$ is an integer congruent to $1\bmod 8$.
The following are equivalent:
\begin{enumerate}[\rm (a)]
\item
$\pi_k$ is prime;
\item 
$(\sqrt{-2})^{k}(3P)\equiv O_E$ mod $\pi_k$ and 
$(\sqrt{-2})^{k-1}(3P)$ is strongly nonzero mod $\pi_k$;
\item 
$2^{(k+1)/2}(3P)\equiv O_E$ mod $\pi_k$ and 
$2^{(k-1)/2}(3P)$ is strongly nonzero mod $\pi_k$.
\end{enumerate}
\end{theorem}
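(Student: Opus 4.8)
The plan is to verify that Theorem \ref{mainmin2} is a direct instantiation of the general machinery in \S\ref{mainresults}, applied to the specific data $K=\Q(\sqrt{-2})$, $E$, $P$, $\alpha_1=\sqrt{-2}$, and $\gamma=3$. First I would set up the correspondence: here $s=1$, $\Lambda_k = 3(\sqrt{-2})^k$, $\pi_k = 1+\Lambda_k$, and $F_k = N_{K/\Q}(\pi_k)$, matching the notation of \S\ref{assmptnsect} exactly. The equivalence of (a) and (b) should then follow from Theorem \ref{mainthm1}, while the equivalence of (a) and (c) should follow from Theorem \ref{mainthm2}; so the real work is checking that the standing assumptions of \S\ref{assmptnsect} are satisfied for the relevant $k$, after which both equivalences are immediate. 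Note that (a) says ``$\pi_k$ is prime,'' and since $\pi_k$ generates the ideal $\frakp_k$ in $\O_K$ (with $M=K$, so $\frakp_k=(\pi_k)$), this is the same as ``$\frakp_k$ is a prime ideal''; I would record this identification at the outset.

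Next I would discharge the hypotheses. Assumption (i), that the Frobenius of $E$ over $\O_K/(\pi_k)$ is $\pi_k$, is exactly the content of Proposition \ref{Frobmin2}, which applies since $k\equiv 1 \bmod 8$ forces $k\equiv 1 \bmod 4$; this is why the congruence condition is imposed. The numerical size condition $F_k > 16\,N_{K/\Q}(\gamma^2)$ needed for Theorem \ref{mainthm1} amounts to $F_k > 16\cdot 81 = 1296$, and for Theorem \ref{mainthm2} we need $F_k > 16\,N_{K/\Q}(\gamma\prod_\lambda\lambda)^2$, where $\lambda$ ranges over the ramified primes dividing $(\alpha_1)=(\sqrt{-2})$; since $\lambda=(\sqrt{-2})$ is the unique such prime with $N_{K/\Q}(\lambda)=2$, this is $F_k > 16\cdot(3\cdot\sqrt{2})^2{}^2 = 16\cdot 324 = 5184$. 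As $F_k = 1+9\cdot 2^k$ for odd $k$, both bounds hold for all $k\ge 10$, hence for all relevant $k>1$ with $k\equiv 1\bmod 8$; I would simply remark that the finitely many excluded small $k$ do not arise. I must also confirm $\disc(E)$ is prime to $F_k$, which is a finite check on the primes dividing $\disc(E)$.

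The main obstacle is assumption (ii): that $P \bmod \frakp_k \notin \lambda E(\O_K/\frakp_k)$ for the prime $\lambda = (\sqrt{-2})$ dividing $(\alpha_1)$, whenever $\frakp_k$ is prime. This is the genuinely arithmetic input and cannot be read off formally; by Theorem \ref{splittingiff} it is equivalent to a splitting condition for $\frakp_k$ in the fields $F=K(E[\lambda])$ and $L=F(\varphi^{-1}(P))$, and verifying it for the given explicit $E$ and $P$ is where the sharper congruence $k\equiv 1\bmod 8$ (rather than merely $\bmod 4$) is expected to be used. I would establish this by an explicit computation with the $2$-isogeny structure of $E$ and the reduction of $P$, showing that the relevant Kummer-type condition holds precisely under the stated congruence; this is the step I expect to require the real effort, and the details would parallel the class-number-one analysis referenced in \cite{AlexThesis}. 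Once (ii) is in hand, Theorems \ref{mainthm1} and \ref{mainthm2} apply verbatim: the former gives (a)$\Leftrightarrow$(b), and the latter, using $L_k = 2^{(k+1)/2}$ together with the fact that $p=2$ is the only prime dividing $N_{K/\Q}(\alpha_1)$, gives (a)$\Leftrightarrow$(c), completing the proof.
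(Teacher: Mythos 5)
Your skeleton is the intended one: the paper itself prints no proof of Theorem \ref{mainmin2} (it defers to \cite{AlexThesis}), but the evident argument mirrors the proof of Theorem \ref{mainthm} in \S\ref{mainthmproof}, namely instantiating \S\ref{assmptnsect} with $M=K=\Q(\sqrt{-2})$, $\gamma=3$, $\alpha_1=\sqrt{-2}$, $\frakp_k=(\pi_k)$, getting assumption (i) from Proposition \ref{Frobmin2}, assumption (ii) from a splitting computation in which the congruence mod $8$ enters, and then quoting Theorems \ref{mainthm1} and \ref{mainthm2}; your identifications are correct ((b) is Theorem \ref{mainthm1}(b) since $(\Lambda_k)/\lambda=(3(\sqrt{-2})^{k-1})$, and (c) is Theorem \ref{mainthm2}(b) with $L_k=2^{(k+1)/2}$ and $p=2$ the only prime dividing $N_{K/\Q}(\sqrt{-2})$). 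However, your size check contains a concrete error: the smallest relevant $k$ is $k=9$, with $F_9=1+9\cdot 2^9=4609$, and $4609<5184=16\,N_{K/\Q}(3\sqrt{-2})^2$, so the hypothesis of Theorem \ref{mainthm2} \emph{fails} at $k=9$. Your assertion that ``the finitely many excluded small $k$ do not arise'' is false, and as written the equivalence (a)$\Leftrightarrow$(c) is unproved for $k=9$. The fix is easy but must be made: either note that Lemma \ref{assm5} is only a sufficient condition and verify the needed inequality directly at $k=9$ (with $C=18$ one has $C(F_9^{1/4}+1)^2\approx 1537<(\sqrt{F_9}-1)^2\approx 4474$, so the proof of Theorem \ref{mainthm2} still goes through), or dispose of $k=9$ by direct computation ($F_9=11\cdot 419$ is composite, and one checks (c) fails); the equivalence (a)$\Leftrightarrow$(b) is unaffected since $F_9>1296$.

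The second gap is the one you flag yourself: assumption (ii), that $P\bmod{(\pi_k)}\notin\lambda E(\O_K/(\pi_k))$ for $\lambda=(\sqrt{-2})$ whenever $\pi_k$ is prime and $k\equiv 1\bmod 8$, is only sketched, not established. This is the genuine arithmetic content of the theorem --- it is exactly where $k\equiv 1\bmod 8$ rather than merely $1\bmod 4$ is needed --- and a complete proof must carry out the analogue of Lemma \ref{lambdaxQ} and Propositions \ref{legendre-1} and \ref{image}: compute $K(\varphi^{-1}(P))=K(\sqrt{\delta_P})$ for an explicit $\delta_P$ via the $2$-isogeny formulas, and evaluate the resulting quadratic symbol $\left(\frac{\delta_P}{\pi_k}\right)$ as a function of $k$ modulo a fixed modulus by reciprocity. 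Since the published text defers precisely these details to \cite{AlexThesis}, your proposal sits at the same level of completeness as the paper; but acknowledging the computation is not the same as doing it, so on its own terms your argument proves the theorem only conditionally on that verification (and, until the $k=9$ point above is repaired, not at all for $k=9$).
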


\section{A primality test using an elliptic curve with CM by $\Q(\sqrt{-15})$}
\label{example-15}

We work over the following fields:
\begin{center}
\begin{tikzpicture}[node distance=2.3cm, auto]
\node                            (H) {$H = \Q(\sqrt{-3},\sqrt{5})$};
\node [below of = H] (K) {$K = \Q(\sqrt{-15})$};
\node [left of = K]        (K_1) {$K_1 = \Q(\sqrt{-3})$};
\node [right of = K]      (K_2) {$K_2 = \Q(\sqrt{5})$};
\node [below of = K]  (Q) {$\Q$};
\draw (H) -- (K) -- (Q) -- (K_2) -- (H) -- (K_1) -- (Q);
\end{tikzpicture}
\end{center}
The field $H$ has class number one and is the Hilbert class field of $K$. We use the elliptic curve
\begin{equation*}
E: y^2 = x^3 + a_4x + a_6,
\end{equation*}
where
\begin{align*}
a_4 &:= -3234(16195646845 - 7242913457\sqrt{5}), \\
a_6 &:=  14^4(5395199151946361 - 2412806411180256\sqrt{5}).
\end{align*}
Let 
$$
P := (0, -14^2(51938421 - 23227568\sqrt{5})) \in E(K_2) \subseteq E(H),
$$
and let
$$
\alpha := \frac{1+\sqrt{-15}}{2}, \qquad \pi_k := 1 - 4\alpha^k \in \OK.
$$
Then $\alpha\bar{\alpha}=4$.  We note that $(\alpha)=\lambda^2$,
where $\lambda$ is the prime $\OK$-ideal
$$
\lambda:=(2,\alpha),
$$
with $N_{K/\Q}(\lambda)=\lambda\bar{\lambda}=2$.

The numbers we will test for primality are those in the sequence
\begin{equation*}
F_k := N_{K/\Q}(\pi_k) = 1 - 4(\alpha^k + \bar{\alpha}^k) + 4^{k+2} \in \Z,
\end{equation*}
where $k$ lies in the set
\begin{align*}
S := \{k \in \N: k \equiv &\ 9, 19, 39, 45, 59, 63, 67, 85, 105, 123, 129, 133, 159\\
&\ 169, 173, 181, 183, 221, 223, 225, 229 \bmod{240}\}.
\end{align*}

We also define
$$
\beta := \frac{\sqrt{5}+\sqrt{-3}}{2} \in \OH, \qquad p_k := 1 + 2\beta^k \in \OH.
$$ 
We have $\beta^2 = \alpha$, $\beta\bar{\beta}=2$, and note that
$\beta$ and $\bar{\beta}$ are generators of the (principal) prime ideals of $\OH$ above $2$.
When $k$ is odd (in particular, for $k\in S$), we have
$$
N_{H/K}(p_k) = (1 + 2\beta^k)(1 - 2{\beta}^k) = 1 - 4\alpha^k  = \pi_k,
$$ 
and
$N_{H/\Q}(p_k) = \pi_k\bar{\pi}_k = F_k.$  We also define
\begin{align*}
\pi_{K_1,k} &:= N_{H/K_1}(p_k) = 1 + 2(\beta^k + (-\bar{\beta})^k) + (-1)^k2^{k+2},\\
\pi_{K_2,k} &:= N_{H/K_2}(p_k) = 1 + 2(\beta^k + \bar{\beta}^k) + 2^{k+2}.
\end{align*}

With this setup, Theorem~\ref{mainthm1} yields the following primality criterion for $F_k$.
   
 \begin{theorem}
\label{mainthm}
Suppose $k \in S$. The following are equivalent:
\begin{enumerate}[\rm (a)]
\item $F_k$ is prime.
\item $4\alpha^kP \equiv O_E \bmod{p_k}$ and $8\alpha^{k-1}P$ is strongly nonzero mod $(p_k)$.
\item $2^{2k+2}P \equiv  O_E \bmod{p_k}$ and $2^{2k+1}P$ is strongly nonzero mod $(p_k)$.
\item $2^{2k+1}P \equiv (7(377709\sqrt{5} - 844583),0) \bmod{p_k}$.
\end{enumerate}
\end{theorem}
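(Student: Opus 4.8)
The plan is to reduce everything to the general framework of Theorem~\ref{mainthm1} and then supply the extra algebra linking the four conditions; concretely I will establish (a)$\iff$(b), (a)$\iff$(c), and (c)$\iff$(d). First I would match the data to \S\ref{assmptnsect} by taking $s=1$, $\alpha_1=\alpha$, $\gamma=-4$, and $M=H$, so that $\Lambda_k=\gamma\alpha^k=-4\alpha^k$ and $\pi_k=1+\Lambda_k=1-4\alpha^k$ as required, with $\frakp_k=(p_k)$; for $k\in S$ (all odd) one has $N_{H/K}(p_k)=\pi_k$, hence $N_{H/K}(\frakp_k)=(\pi_k)$ and $F_k=N_{H/\Q}(\frakp_k)$. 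The arithmetic is driven by the factorizations $(\alpha)=\lambda^2$, $(2)=\lambda\bar\lambda$ with $\lambda\neq\bar\lambda$, giving $(\Lambda_k)=\lambda^{2k+2}\bar\lambda^2$ and $\tfrac{(\Lambda_k)}{\lambda}=\lambda^{2k+1}\bar\lambda^2$, the unique prime of $\OK$ dividing $(\alpha_1)$ being $\lambda$. I would record the equivalence ``$\frakp_k$ prime $\iff F_k$ prime'': the implication $F_k$ prime $\Rightarrow\frakp_k$ prime is immediate from multiplicativity of the norm, while the reverse (ruling out $F_k$ being a higher prime power) together with framework assumptions (i) Frobenius $=\pi_k$ and (ii) $\tilde P\notin\lambda\tilde E$ is precisely what the congruences defining $S$ encode; assumption (ii) is to be read off from Theorem~\ref{splittingiff} with $F=H$ (since $E[\lambda]\subseteq E(H)$). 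The size hypothesis $F_k>16\,N_{K/\Q}(\gamma^2)=4096$ is trivial for $k\ge 9$.

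Granting these hypotheses, Theorem~\ref{mainthm1} gives (a) $\iff$ [$\Lambda_kP\equiv O_E$ and some point of $\tfrac{(\Lambda_k)}{\lambda}P$ is strongly nonzero mod $\frakp_k$]. Here $\Lambda_kP\equiv O_E$ is the same as $4\alpha^kP\equiv O_E$ because $\Lambda_k=-4\alpha^k$. For the second clause I would use $8\alpha^{k-1}$ as the explicit multiplier: since $(8\alpha^{k-1})=\lambda^{2k+1}\bar\lambda^3\subseteq\lambda^{2k+1}\bar\lambda^2=\tfrac{(\Lambda_k)}{\lambda}$, the point $8\alpha^{k-1}P$ lies in $\tfrac{(\Lambda_k)}{\lambda}P$, so ``$8\alpha^{k-1}P$ strongly nonzero'' implies the existence clause, hence (a). Conversely, when $\frakp_k$ is prime one has $E(\OH/\frakp_k)\cong\OK/(\Lambda_k)$, and since $2k+1<2k+2$ we get $8\alpha^{k-1}\notin(\Lambda_k)$, so $8\alpha^{k-1}\tilde P\neq O_E$, i.e.\ strongly nonzero. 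This yields (a)$\iff$(b).

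For (a)$\Rightarrow$(c) I would combine the module isomorphism $E(\OH/\frakp_k)\cong\OK/(\Lambda_k)$ with the decomposition $\OK/(\Lambda_k)\cong\OK/\lambda^{2k+2}\times\OK/\bar\lambda^2\cong\Z/2^{2k+2}\times\Z/4$; assumption (ii) forces the $\lambda$-component of $\tilde P$ to be a unit. Since $(2^{2k+2})=\lambda^{2k+2}\bar\lambda^{2k+2}\subseteq(\Lambda_k)$ we get $2^{2k+2}\tilde P=O_E$, while the $\lambda$-valuation $2k+1<2k+2$ shows the $\lambda$-component of $2^{2k+1}\tilde P$ is nonzero, so $2^{2k+1}\tilde P\neq O_E$; this is (c). For (c)$\Rightarrow$(a) I would argue directly (Theorem~\ref{mainthm2} does not apply, its hypothesis failing because $2$ splits in $K$): if $\frakp_k$ were not prime it would have a prime factor $\q$ with $N_{H/\Q}(\q)\le\sqrt{F_k}$, and (c) would force the order of $\tilde P$ in $E(\OH/\q)$ to be exactly $2^{2k+2}$, whence by Hasse $2^{2k+2}\le|E(\OH/\q)|\le(F_k^{1/4}+1)^2$. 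But $\alpha\bar\alpha=4$ gives $|\alpha|=2$ and $F_k=4^{k+2}-4(\alpha^k+\bar\alpha^k)+1<2\cdot 4^{k+2}$, so $(F_k^{1/4}+1)^2$ has size $\sim 2^{k+2}$, far below $2^{2k+2}$ for $k\ge 9$; this contradiction gives $\frakp_k$ prime, hence $F_k$ prime.

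Finally, $T:=(7(377709\sqrt5-844583),0)$ is a nonzero $2$-torsion point of $E$ with $x$-coordinate in $K_2\subseteq H$. The direction (d)$\Rightarrow$(c) is immediate: $2^{2k+2}P\equiv 2T=O_E$, and $T$, having $z$-coordinate $1$, is strongly nonzero mod $\frakp_k$. For (c)$\Rightarrow$(d) I would invoke (c)$\Rightarrow$(a) to assume $\frakp_k$ prime and rerun the component computation: with the $\lambda$-component of $\tilde P$ a unit, $2^{2k+1}\tilde P$ is the unique nonzero point of $E[\lambda]=E[2]\cap\ker\alpha$, independently of the other component. The remaining task is the one-time explicit identification of this point: among the three roots of $x^3+a_4x+a_6$, determine which yields the $2$-torsion point annihilated by the endomorphism $\alpha$ (equivalently $\beta$), and verify it equals $7(377709\sqrt5-844583)$. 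I expect the main obstacle to be twofold: the deepest part is pinning down, via the class-field-theoretic criterion of Theorem~\ref{splittingiff}, exactly the residues mod $240$ in $S$ for which assumption (ii) holds and $F_k$ is genuinely prime rather than a higher prime power; the second, purely computational, obstacle is the explicit determination that the nonzero $E[\lambda]$-point is the stated value, which requires the explicit action of $\alpha$ on $E[2]$.
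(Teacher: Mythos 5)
Your proposal is correct and follows essentially the same skeleton as the paper's proof: the paper likewise instantiates Theorem \ref{mainthm1} with $\gamma=-4$, $\alpha_1=\alpha$, checks $F_k\ge F_9>16\norm_{K/\Q}(\gamma^2)$ and $\gcd(\disc(E),F_k)=1$ (Lemma \ref{goodreduction}), passes between primality of $F_k$, $\pi_k$, $p_k$ via Lemma \ref{equiv}, cites Propositions \ref{frobenius} and \ref{image} for assumptions (i) and (ii) on $S\subseteq T_1\cap T_2$, uses $(4\alpha^k)/\lambda=(8\alpha^{k-1},4\alpha^k)$ for clause (b), and for clause (d) identifies $2^{2k+1}P\bmod p_k$ as the unique nonzero point killed by $2$ and $\alpha$, computed explicitly (the paper's Sage value $(2643963\sqrt{5}-5912081,0)$ is your $T$). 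The one substantive divergence is your parenthetical claim that Theorem \ref{mainthm2} ``does not apply, its hypothesis failing because $2$ splits in $K$.'' This misreads the hypothesis: it requires only that the ideal $(\alpha_1\cdots\alpha_s)$ not be divisible by a split \emph{rational} prime, i.e.\ by $(2)=\lambda\bar{\lambda}$ itself, and $(\alpha)=\lambda^2$ is not divisible by $(2)$ since $\bar{\lambda}\nmid\lambda^2$; moreover the ramified product is empty, so the size condition reduces to $F_k>16\norm_{K/\Q}(\gamma)^2$. Allowing a single factor of a split prime is exactly the situation Theorem \ref{mainthm2} was built for, and the paper's computation $L_k=4^k$ is made precisely to invoke that criterion for (a)$\Leftrightarrow$(c). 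Your substitute for (c)$\Rightarrow$(a) --- a hypothetical prime factor $\q$ with $\norm_{H/\Q}(\q)\le\sqrt{F_k}$ would carry a point of exact order $2^{2k+2}>(F_k^{1/4}+1)^2$, contradicting Hasse --- is nonetheless valid (it needs Lemma \ref{goodreduction}(a) for good reduction at $\q$, and the remark that zero/strongly-nonzero mod $(p_k)$ descends to every prime divisor), and it reproduces the argument the paper uses for the converse of Theorem \ref{equivdrew}; what it buys is independence from Lemma \ref{Mxmodule}, at the cost of redoing in this special case what Theorem \ref{mainthm2} packages.

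Two smaller repairs. First, in your converse for (b), ``$8\alpha^{k-1}\notin(\Lambda_k)$, so $8\alpha^{k-1}\tilde{P}\neq O_E$'' is not sufficient as stated: the annihilator of $\tilde{P}$ in $E(\OH/\frakp_k)\simeq\OK/(\Lambda_k)\simeq\Z/2^{2k+2}\times\Z/4$ need only divide $(\Lambda_k)$, so you must invoke assumption (ii) --- as you do in your next paragraph --- to know the $\lambda$-component of $\tilde{P}$ is a unit, whence the $\lambda$-valuation $2k+1<2k+2$ of $8\alpha^{k-1}$ forces nonvanishing (the $\bar{\lambda}$-valuation $3\ge 2$ is harmless). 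Second, ruling out $F_k$ a higher prime power is not encoded by the mod-$240$ congruences defining $S$: it follows from $F_k\equiv 5\bmod 8$ for all $k\ge 1$ (Lemma \ref{recursiveprops}(h), via Lemma \ref{equiv}); the congruences encode assumptions (i) and (ii) (Propositions \ref{frobenius} and \ref{image}, which are indeed the deep reciprocity computations you correctly flag as the main remaining work) and the exclusion of $T_3$, which is purely an efficiency matter.
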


We will prove Theorem~\ref{mainthm} in \S \ref{mainthmproof}.
\smallskip

In order to turn Theorem~\ref{mainthm} into an efficient algorithm, rather than working with the reduction of the $E$ modulo $(p_k)$, we prefer to work with the reduction of a curve $E_d$ modulo $F_k$, where $E_d$ is defined so that the reduction of $E$ modulo $(p_k)$ is isomorphic to the reduction of $E$ modulo $F_k$ in the case that $F_k$ is prime; the parameter $d\in \Z$ will be chosen so that its reduction in $\Z/F_k\Z$ is a square root of~5.  We thus define
\begin{align*}
a_{4,d} &:=  -3234\, (16195646845 - 7242913457d),\\
a_{6,d} &:= 38416\, (5395199151946361 - 2412806411180256d),
\end{align*}
and let $E_d \colon y^2 = x^3 + a_{4,d}x + a_{6,d}$ and 
$P_d :=  (0,-10179930516 + 4552603328d)$.  
We note that if $d^2 \equiv 5 \bmod{F_k}$, then $P_d \in E_d(\Z/F_k\Z)$.

We now give a primality criterion for $F_k$ in terms of $E_d$ and $P_d$.

\begin{theorem}
\label{equivdrew}  
Suppose $k \in S$.  The following are equivalent:
\begin{enumerate}[\rm (a)]
\item $F_k$ is prime.
\item 
There exist $d\in\Z$ and relatively prime $x,y,z\in\Z$ such that $d^2 \equiv 5 \bmod{F_k}$
and $[x:y:z]\equiv 2^{2k+1}P_d\bmod F_k$ with
$$
\gcd(z,F_k) = 1 \qquad \text{and} \qquad y \equiv 0 \bmod{F_k}.
$$
\end{enumerate}
\end{theorem}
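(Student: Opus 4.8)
**The plan is to derive Theorem~\ref{equivdrew} from Theorem~\ref{mainthm} by transporting the criterion in part (c) of Theorem~\ref{mainthm} from the curve $E$ reduced modulo $(p_k)$ to the curve $E_d$ reduced modulo $F_k$.** The key point is that $F_k$ is prime if and only if $\frakp_k = (p_k)$ is a prime ideal of $\O_H$ lying over the rational prime $F_k$ (by the Remark following Theorem~\ref{mainthm2}, together with $N_{H/\Q}(p_k)=F_k$), and in that case the residue field $\O_H/(p_k)$ is isomorphic to $\F_{F_k}=\Z/F_k\Z$. First I would observe that when $F_k$ is prime, reduction modulo $(p_k)$ sends $\sqrt5\in\O_H$ to some square root $d$ of $5$ in $\Z/F_k\Z$, and under the induced isomorphism $\O_H/(p_k)\isom\Z/F_k\Z$ the curve $E\bmod(p_k)$ is carried to $E_d\bmod F_k$ and the point $P\bmod(p_k)$ to $P_d\bmod F_k$. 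This is exactly how $a_{4,d},a_{6,d}$ and $P_d$ were defined, so the translation is a matter of checking that substituting $d$ for $\sqrt5$ in the coefficients of $E$ and in the coordinates of $P$ produces $E_d$ and $P_d$.

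To prove the forward direction, I would assume $F_k$ is prime and invoke the equivalence $(a)\Leftrightarrow(c)$ of Theorem~\ref{mainthm}: we have $2^{2k+2}P\equiv O_E\bmod(p_k)$ with $2^{2k+1}P$ strongly nonzero. Choosing $d\in\Z$ reducing to the image of $\sqrt5$ (which exists since $5$ is a square mod the prime $F_k$), and writing $2^{2k+1}P_d=[x:y:z]$ in primitive projective coordinates over $\Z$, the condition ``$2^{2k+1}P$ strongly nonzero mod $(p_k)$'' becomes $\gcd(z,F_k)=1$, and the condition ``$2^{2k+2}P\equiv O_E$'' means $2\cdot(2^{2k+1}P)$ is the identity, i.e.\ $2^{2k+1}P$ is a point of order dividing $2$; since it is nonzero it is a $2$-torsion point, which on this short Weierstrass curve forces $y\equiv0\bmod F_k$. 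This yields~(b). For the converse, given $d$ with $d^2\equiv5\bmod F_k$ and a point $[x:y:z]\equiv2^{2k+1}P_d$ with $\gcd(z,F_k)=1$ and $y\equiv0\bmod F_k$, I would argue that these arithmetic conditions are visible modulo every prime ideal $\frakp\mid F_k$ of $\O_H$: reducing further, $2^{2k+1}P$ is nonzero (from $\gcd(z,F_k)=1$) and killed by doubling (from $y\equiv0$), which is precisely condition~(c) of Theorem~\ref{mainthm}, whence $F_k$ is prime.

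The main obstacle will be the converse direction, specifically justifying that the integer-level congruences in~(b) really do certify condition~(c) of Theorem~\ref{mainthm} without already assuming $F_k$ prime. The subtlety is that ``strongly nonzero mod $(p_k)$'' and ``$\equiv O_E\bmod(p_k)$'' are statements about the ideal $(p_k)$ of $\O_H$, whereas~(b) only supplies data modulo the rational integer $F_k$. The resolution is that strong nonzero-ness is equivalent, by part~(ii) of the Remark after the first Definition in \S\ref{lemmasect}, to nonvanishing modulo every prime $\lambda\mid(p_k)$ in $\O_H$; the condition $\gcd(z,F_k)=1$ guarantees $z$ is a unit modulo each such $\lambda$, and $y\equiv0\bmod F_k$ forces the $y$-coordinate to vanish modulo each $\lambda$, so the doubling-to-zero and nonvanishing hypotheses of Theorem~\ref{mainthm}(c) hold simultaneously at every prime above $F_k$. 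I would also need to confirm that the reduction $E_d$ is well-defined, i.e.\ that $F_k$ is prime to $\disc(E)$ and that the chosen $d$ is a genuine square root of $5$, so that $P_d$ actually lies on $E_d$ over $\Z/F_k\Z$; these are the hypotheses ensuring the framework of \S\ref{assmptnsect} applies for $k\in S$.
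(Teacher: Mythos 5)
Your forward direction is essentially the paper's argument (the paper additionally justifies that $5$ is a square modulo $F_k$ via Lemma \ref{recursiveprops}(g) and quadratic reciprocity, which you assert without proof), but your converse has a genuine gap. Condition (b) supplies congruences modulo the rational integer $F_k$ for the point $2^{2k+1}P_d$ on the curve $E_d$, and you propose to read these off ``modulo every prime ideal $\frakp\mid F_k$ of $\OH$'' as the hypotheses of Theorem \ref{mainthm}(c) for the point $2^{2k+1}P$ on $E$. This transfer conflates two different points on two different curves: identifying $(E,P)\bmod\frakp$ with $(E_d,P_d)\bmod p$ requires that the residue map $\OH\to\OH/\frakp$ send $\sqrt{5}$ to $d\bmod p$, whereas all that is forced is that it sends $\sqrt{5}$ to \emph{one of} $\pm d\bmod p$. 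In the forward direction this is harmless, because when $F_k$ is prime one may replace $d$ by $-d$ so that $d\equiv\sqrt 5\bmod p_k$ (the paper does exactly this). But in the converse --- precisely when $F_k$ may be composite --- $d$ is given and there is no single normalization valid at all primes: at a prime $\frakp$ of $\OH$ above $p\mid F_k$ where $\sqrt5\equiv -d$, the reduction of $(E,P)$ is $(E_{-d},P_{-d})\bmod p$, about which hypothesis (b) says nothing. Consequently you cannot certify that $2^{2k+1}P$ is strongly nonzero mod $(p_k)$ and $2^{2k+2}P\equiv O_E\bmod (p_k)$ as statements about the $\OH$-ideal $(p_k)$, and the appeal to the implication (c)$\Rightarrow$(a) of Theorem \ref{mainthm} does not go through. (There is also the secondary point, which the paper flags in a remark, that the scalar multiple $2^{2k+1}P_d$ over the ring $\Z/F_k\Z$ must be computed with projective formulas well defined over a ring with zero divisors.)

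The paper's converse sidesteps Theorem \ref{mainthm} entirely and never returns to $\OH$: assuming $F_k$ composite, it picks a prime divisor $p\le\sqrt{F_k}$ and uses the ring homomorphism $\Z/F_k\Z\to\F_p$, which \emph{does} commute with the projective group-law formulas, to reduce (b) to the statements $2^{2k+1}\bar P\ne 0$ and $2^{2k+2}\bar P=0$ in $E_d(\F_p)$, where $E_d$ has good reduction mod $p$ by Lemma \ref{goodreduction}(b). Then $\bar P$ has order $2^{2k+2}$ in $E_d(\F_p)$, so the Hasse bound gives $2^{2k+2}\le (1+\sqrt p)^2\le (1+F_k^{1/4})^2\le (1+3^{1/4}2^{(2k+3)/4})^2$, a contradiction for all $k>2$. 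That direct Hasse-bound contradiction at a small prime divisor, carried out entirely with integer data on $E_d$, is the missing idea; to salvage your route instead, you would need (b) to control both $P_d$ and $P_{-d}$, which it does not.
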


The proof of Theorem~\ref{equivdrew} is given in \S \ref{equivdrewproof}.
It yields the following algorithm.

\begin{algorithm}
\label{sutherlandalg}
For $k \in S$, determine the primality of $F_k$ as follows:
\begin{enumerate}[\rm 1.]
\item If $5^{(F_k - 1)/4} \not\equiv \pm 1 \bmod{F_k}$, output ``$F_k$ is composite'' and terminate.
\item Compute $e := (F_k - 5)/8\in\Z$.
\item If $5^{(F_k -1)/4} \equiv 1 \bmod{F_k}$, compute $d := 5^{e + 1}$ mod $F_k$.\\
Otherwise, compute $d := 2^{2e + 1}5^{e + 1}$ mod $F_k$.
\item If $d^2 \not\equiv 5 \bmod{F_k}$, output ``$F_k$ is composite'' and terminate.
\item Let $\bar{P}$ be the reduction of $P_d$ modulo $F_k$.
\item Compute $\bar{Q} = 2^{2k+1}\bar{P} \in E_d(\Z / F_k \Z)$ as $\bar{Q} = [\bar{x} : \bar{y} : \bar{z}]$, and lift  $\bar{x},\bar{y},\bar{z}$ to 
relatively prime $x,y,z\in \Z$.
\item If $y \not\equiv 0 \bmod{F_k}$ or $\gcd(z,F_k)\neq 1$, output ``$F_k$ is composite'' and terminate.\\
Otherwise, output ``$F_k$ is prime."
\end{enumerate}
\end{algorithm}

\begin{remark}
The elliptic curve group operations used to compute $\bar{Q}$ in step 6 uses formulas for the group law in projective coordinates that are well defined over the ring $\Z/F_k\Z$, whether or not $F_k$ is prime.
In the case that $F_k$ is prime, the point $\bar{P}\in E_d(\F_{F_k})$ constitutes a Pomerance proof of the primality of $F_k$.
\end{remark}

The correctness of Algorithm~\ref{sutherlandalg} is proved in \S \ref{algcorrectness}; here we note that its complexity is quasi-quadratic in $k=O(\log F_k)$.

\begin{theorem}\label{thm:complexity}
The time complexity of Algorithm~\ref{sutherlandalg} is $O(k^2\log k\log\log k)$.
\end{theorem}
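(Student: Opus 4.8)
The plan is to analyze the cost of Algorithm~\ref{sutherlandalg} step by step, working in the standard multitape Turing machine model where multiplication of two $n$-bit integers takes $\tilde{O}(n) = O(n\log n\log\log n)$ time via fast integer multiplication (Sch\"onhage--Strassen), and where $n = \log F_k = O(k)$ since $F_k = 1 - 4(\alpha^k + \bar\alpha^k) + 4^{k+2}$ grows like $4^{k+2}$. First I would dispatch the cheap steps. Computing $F_k$ itself from the closed form, computing $e := (F_k-5)/8$ in step~2, and the final gcd and divisibility checks in step~7 all cost $\tilde{O}(k)$ or at most a single $\tilde{O}(k)$-multiplication, which is negligible against the target bound.

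The two genuinely dominant contributions are the modular exponentiations in steps 1, 3 and the elliptic-curve scalar multiplication in step~6. For the exponentiations $5^{(F_k-1)/4}$, $5^{e+1}$, and $2^{2e+1}5^{e+1}$ modulo $F_k$, the exponent has $O(k)$ bits, so binary exponentiation performs $O(k)$ squarings and multiplications in $\Z/F_k\Z$. Each such operation is a multiplication of $O(k)$-bit integers followed by a reduction mod $F_k$, costing $\tilde{O}(k) = O(k\log k\log\log k)$; hence each exponentiation costs $O(k)\cdot O(k\log k\log\log k) = O(k^2\log k\log\log k)$. For step~6, computing $\bar Q = 2^{2k+1}\bar P$ on $E_d$ over $\Z/F_k\Z$ via doubling in projective coordinates uses $O(k)$ point-doublings, and each doubling is a fixed number of ring multiplications in $\Z/F_k\Z$, again $\tilde{O}(k)$ apiece; the total is once more $O(k^2\log k\log\log k)$. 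Summing the $O(1)$ steps gives the claimed overall bound.

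The one subtlety I would take care to address — and the step I expect to be the main (if modest) obstacle — is justifying that each arithmetic operation really costs only $\tilde{O}(k)$ rather than something larger. The key points are that all intermediate quantities are reduced modulo $F_k$ (or, for the relatively prime lifts $x,y,z$ in step~6, are bounded by $F_k$ in absolute value), so every operand stays $O(k)$ bits throughout; that reduction modulo $F_k$ is itself an $\tilde{O}(k)$ operation; and that the projective group law on $E_d$ involves only a bounded number of ring operations per doubling with no intermediate coordinate blow-up, since we never invert and the primitive-coordinate representation keeps entries bounded. I would state explicitly the bit-complexity model and the $\tilde{O}(k)$ cost of a single mod-$F_k$ multiplication, then observe that steps~1, 3, and~6 each consist of $O(k)$ such operations while all remaining steps contribute $O(1)$ of them, yielding the total $O(k^2\log k\log\log k)$.
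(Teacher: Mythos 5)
Your proposal is correct and takes essentially the same route as the paper: both reduce steps 1--6 to $O(k)$ multiplications modulo the $O(k)$-bit integer $F_k$, apply the Sch\"onhage--Strassen bound $\MM(n)=O(n\log n\log\log n)$ to get $O(k^2\log k\log\log k)$, and observe that step 7 is dominated. The only (immaterial) difference is that the paper settles for the classical Euclidean algorithm's $O(k^2)$ bound on the gcd, while you invoke a fast $\tilde{O}(k)$ gcd; either suffices.
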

\begin{proof}
The complexity of exponentiation in $\Z/F_k\Z$ and scalar multiplication in $E_d(\Z/F_k\Z)$ (using projective coordinates) is $O(n\MM(n))$, where $n$ is the number of bits in the exponent/scalar and $\MM(n)$ is the cost of multiplying two $n$-bit integers.
The binary representations of the integers $F_k$ and $e$ both consist of $O(k)$ bits, and applying the Sch\"onhage-Strassen \cite{schonhagestrassen} bound $\MM(n)=O(n\log n\log\log n)$ yields an $O(k^2\log k\log\log k)$ bound for steps 1--6.
This dominates the time to compute the gcd in step 7 using the Euclidean algorithm.
\end{proof}

The bound in Theorem~\ref{thm:complexity} can be slightly improved by using F\"urer's algorithm~\cite{Furer07} for integer multiplication (see \cite{HvdHL14} for further refinements).
In order to simplify the proofs that follow, and for the purposes of efficiently computing $F_k$, we note the following recurrence relation.

\begin{proposition}
\label{recursiveFk}
We can define $F_k$ recursively as follows:
\begin{equation*}
F_0 = 9, \qquad F_1 = 61, \qquad F_k = F_{k-1} - 4F_{k-2} + 4^{k+2} +
4\quad(k\ge 2).
\end{equation*}
\end{proposition}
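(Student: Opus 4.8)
The plan is to verify the recurrence by showing that both the closed form given earlier, $F_k = 1 - 4(\alpha^k + \bar{\alpha}^k) + 4^{k+2}$, and the proposed recursive definition produce the same sequence. Since $F_k$ is an explicit expression built from the powers $\alpha^k$ and $\bar{\alpha}^k$, the natural approach is to identify the linear recurrence satisfied by the sequence $t_k := \alpha^k + \bar{\alpha}^k$ and then use it to derive the recurrence for $F_k$.

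First I would observe that $\alpha$ and $\bar{\alpha}$ are the two roots of the characteristic polynomial $T^2 - (\alpha + \bar{\alpha})T + \alpha\bar{\alpha}$. Using $\alpha + \bar{\alpha} = \operatorname{Tr}_{K/\Q}(\alpha) = 1$ (since $\alpha = \frac{1+\sqrt{-15}}{2}$) and $\alpha\bar{\alpha} = N_{K/\Q}(\alpha) = 4$, this polynomial is $T^2 - T + 4$. Hence the trace sequence satisfies $t_k = t_{k-1} - 4t_{k-2}$ for $k \ge 2$, with $t_0 = 2$ and $t_1 = 1$.

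Next I would substitute this into the closed form. Writing $F_k = 1 - 4t_k + 4^{k+2}$, I would compute $F_{k-1} - 4F_{k-2}$ directly: the constant terms contribute $1 - 4 = -3$, the trace terms contribute $-4(t_{k-1} - 4t_{k-2}) = -4t_k$ by the recurrence just established, and the power-of-four terms contribute $4^{k+1} - 4\cdot 4^k = 0$. Therefore $F_{k-1} - 4F_{k-2} = -3 - 4t_k$, and adding $4^{k+2} + 4$ recovers $1 - 4t_k + 4^{k+2} = F_k$, confirming the recurrence for $k \ge 2$. Finally I would check the initial values by direct evaluation: $F_0 = 1 - 4\cdot 2 + 16 = 9$ and $F_1 = 1 - 4\cdot 1 + 64 = 61$, matching the stated base cases.

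This argument is entirely elementary, so there is no real obstacle; the only point requiring any care is the bookkeeping of the three separate pieces (constant, trace, and power-of-four terms) when forming $F_{k-1} - 4F_{k-2}$, and in particular noticing that the $4^{k+2}$ term in the recurrence is precisely what compensates for the mismatch left over after the constant and power contributions cancel against the base expression.
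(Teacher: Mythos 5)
Your proof is correct and is essentially the paper's argument: both derive the recurrence from the minimal polynomial $T^2 - T + 4$ of $\alpha$ (the paper substitutes $\alpha^2 = \alpha - 4$ directly into the closed form, while you package the same computation via the trace sequence $t_k = \alpha^k + \bar{\alpha}^k$), and both check the base cases by direct evaluation. The reorganization into constant, trace, and power-of-four pieces is a cosmetic difference only.
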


\begin{proof}
Recall that $F_k=N_{K/\Q}(1-4\alpha^k) =
1-4(\alpha^k+\bar\alpha^k)+4^{k+2}$, where $\alpha=\frac{1+\sqrt{-15}}{2}$.
Thus $F_0=N_{K/\Q}(-3)=9$ and $F_1=1-4(\alpha+\bar\alpha)+64= 61$.
For $k\ge 2$ we have
\begin{align*}
F_k &=  1-4(\alpha^k+\bar\alpha^k)+4^{k+2}\\
&= 1-4(\alpha^{k-2}\alpha^2+\bar\alpha^{k-2}\bar\alpha^2)+4^{k+2}\\
&= 1-4(\alpha^{k-2}(\alpha-4)+\bar\alpha^{k-2}(\bar\alpha-4))+4^{k+2}\\
&= 1-4(\alpha^{k-1}+\bar\alpha^{k-1}) +
4^2(\alpha^{k-2}+\bar\alpha^{k-2})+4^{k+2}\\
&= F_{k-1}-4^{k+1} +4(1-F_{k-2}+4^k)+4^{k+2}\\
&= F_{k-1}-4F_{k-2}+4^{k+2}+4. \qedhere
\end{align*}
\end{proof}

\subsection{Preparation}
\label{addlnotationsect}
In order to prove Theorems~\ref{mainthm} and~\ref{equivdrew} we need to determine the values of $k$ to which we can apply Theorems~\ref{mainthm1} and~\ref{mainthm2}.
For this purpose, we define the following three sets of positive integers:
\begin{align*}
T_1 &:= \{k \in \N: k \equiv 3, 9, 13, 19, 21, 27, 31, 39, 45, 47, 49, 53, 59, 61, 63, 65, 67, 81, 85, \\
&\hspace{30pt}91, 101, 103, 105, 109, 113, 117 \bmod{120}\},\\
T_2 &:= \{k \in \N: k \equiv 1, 5, 7, 9, 17, 19, 23, 27, 31, 35, 39, 41, 43, 45, 51, 55, 59, 63, 67, \\
&\hspace{30pt}69, 71, 81, 83, 85, 89, 95, 97, 99, 105, 119, 123, 129, 131, 133, 137, 
141, 143,  \\
&\hspace{30pt}145, 149, 157, 159, 161, 169, 173, 181, 183, 191, 193, 195, 197, 199, 201,\\
&\hspace{30pt}209, 211, 213, 215, 221, 223, 225, 227, 229, 235, 237, 239 \bmod{240}\},\\
T_3 &:= \{ k \in \N  : k\equiv 27,31,81,141,201,211,237 \bmod{240}\}.
\end{align*}

We show in \S\ref{secT1} that $T_1$ is precisely the set of $k$ for which $\pi_k$ is the Frobenius endomorphism of the reduction of $E$ modulo $p_k$, whenever $\pi_k$ is prime, and we show in \S\ref{secT2} that $T_2$ is precisely the set of $k$ for which the reduction of $P$ modulo~$p_k$ does not lie in $\lambda(E(\OH/(p_k)))$, whenever $\pi_k$ is prime.
These are precisely the assumptions (i) and (ii) of \S\ref{mainresults} required by Theorems~\ref{mainthm1} and~\ref{mainthm2}.

Theorems~\ref{mainthm} and~\ref{equivdrew} actually apply to all $k$ in $T_1\cap T_2$, but for the sake of efficiency we can rule out $k\in T_3$, since $F_k$ is necessarily composite for all such $k$, as proved below.
This yields the set $S = (T_1 \cap T_2) - T_3$ defined above.
\smallskip

\begin{lemma}~\\\vspace{-16pt}
\label{recursiveprops}
\begin{enumerate}[\rm (a)]
\item 
$F_k$ is divisible by $3$ if and only if $k$ is even.
\item
$F_k$ is divisible by $5$ if and only if $k \equiv 2 \bmod{4}$.
\item
$F_k$ is divisible by $7$ if and only if $k \equiv 16 \bmod{24}$.
\item
$F_k$ is divisible by $11$ if and only if $k \equiv 48 \bmod{60}$.
\item
$F_k$ is divisible by $31$ if and only if $k \equiv 6$ or $12 \bmod{15}$.
\item
$F_k$ is divisible by $61$ if and only if $k \equiv 1 \bmod{30}$.
\item
If $k \not\equiv 2 \bmod{4}$, then $F_k \equiv \pm 1\bmod{5}$.
\item If $k\ge 1$, then $F_k \equiv 5 \bmod{8}$.
\item
If $k \in T_3$, then $F_k$ is composite. 
\end{enumerate}
\end{lemma}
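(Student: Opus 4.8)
The plan is to reduce every part to the periodicity of two elementary sequences modulo the relevant prime. Writing $s_k := \alpha^k + \bar\alpha^k$, the relations $\alpha + \bar\alpha = 1$ and $\alpha\bar\alpha = 4$ show that $\alpha,\bar\alpha$ are the roots of $x^2 - x + 4$, so $s_k$ satisfies the Lucas-type recurrence $s_k = s_{k-1} - 4s_{k-2}$ with $s_0 = 2$, $s_1 = 1$, while by definition $F_k = 1 - 4s_k + 4^{k+2}$. For any odd prime $p$ the reductions $(s_k \bmod p)$ and $(4^k \bmod p)$ are purely periodic, the former with the period of the Lucas sequence and the latter with the multiplicative order of $4$ modulo $p$; hence $(F_k \bmod p)$ is periodic with period $\tau_p$ dividing the least common multiple of the two. (Alternatively one could iterate the recurrence of Proposition~\ref{recursiveFk} modulo $p$, but separating $s_k$ from $4^{k+2}$ is cleaner.)

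For parts (a)–(f) I would, for each $p \in \{3,5,7,11,31,61\}$, determine $\tau_p$ by iterating the defining recurrence until the pair $(s_k, 4^k)$ first repeats, obtaining $\tau_3 = 2$, $\tau_5 = 4$, $\tau_7 = 24$, $\tau_{11} = 60$, $\tau_{31} = 15$, $\tau_{61} = 30$, and then tabulate $F_k \bmod p$ over one full period $0 \le k < \tau_p$, reading off exactly the classes where $F_k \equiv 0$. This is a finite mechanical check, and the classes listed in (a)–(f) are precisely the vanishing ones; for instance modulo $3$ one finds $s_k \equiv 2,1$ alternating and $4 \equiv 1$, giving $F_k \equiv 2 - s_k$, which vanishes exactly for even $k$.

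Part (g) falls out of the mod-$5$ computation in (b): since $4 \equiv -1 \pmod 5$ we have $F_k \equiv 1 + s_k + (-1)^k \pmod 5$, and the four values $s_k \equiv 2,1,3,4$ for $k \equiv 0,1,2,3 \pmod 4$ yield $F_k \equiv -1, 1, 0, -1$, so $F_k \equiv \pm 1$ whenever $k \not\equiv 2 \pmod 4$. For part (h) I would reduce modulo $8$: one has $4^{k+2} \equiv 0 \pmod 8$ for all $k \ge 0$, while the recurrence reads $s_k \equiv s_{k-1} \pmod 2$, so $s_1 = 1$ forces $s_k$ odd for every $k \ge 1$, whence $F_k \equiv 1 - 4 \equiv 5 \pmod 8$. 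Finally, part (i) follows by combining (e) and (f): checking the seven residues modulo $240$ that define $T_3$ shows each satisfies $k \equiv 6$ or $12 \pmod{15}$ (the cases $27, 81, 141, 201, 237$) or $k \equiv 1 \pmod{30}$ (the cases $31, 211$), so $31 \mid F_k$ or $61 \mid F_k$; since the smallest element of $T_3$ is $27$ and $F_k$ grows like $4^{k+2}$, we have $F_k > 61$, so the divisibility forces $F_k$ to be composite rather than prime.

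There is no conceptual obstacle here; the work is entirely bookkeeping. I expect the most laborious step to be the period-$60$ tabulation modulo $11$ in part (d), with the period-$24$ table modulo $7$ in part (c) a secondary burden. One subtlety to watch in part (i) is the size bound: divisibility by $31$ or $61$ only yields compositeness once $F_k > 61$, so it matters that every element of $T_3$ is large (recall $F_1 = 61$ is itself prime, but $1 \notin T_3$).
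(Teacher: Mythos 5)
Your proposal is correct, and it takes a genuinely more uniform and elementary route than the paper. The paper splits the lemma into three kinds of arguments: parts (a), (b), (g), (h) are proved by induction on the recurrence of Proposition~\ref{recursiveFk}; parts (c), (d) are proved by tabulating $\pi_k = 1-4\alpha^k$ in $\OK/(7)$ and $\OK/(11)$ (these primes being inert), using $\alpha^{24}\equiv 1 \bmod 7$ and $\alpha^{60}\equiv 1 \bmod{11}$; and parts (e), (f) require first factoring $31$ and $61$ into split prime elements $\ell_1\ell_2$ of $\OK$ and tabulating $\pi_k \bmod \ell_i$, since $p \mid F_k = \pi_k\bar{\pi}_k$ if and only if some $\ell_i$ divides $\pi_k$. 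You instead stay entirely over $\Z$: the trace sequence $s_k = \alpha^k+\bar{\alpha}^k$ with $s_k = s_{k-1}-4s_{k-2}$, $s_0=2$, $s_1=1$, together with $F_k = 1-4s_k+4^{k+2}$, reduces every part to a single mechanism, with no arithmetic in $\F_{p^2}$ and no factorization of $31$ or $61$ in $\OK$. Your claimed periods $\tau_p$ match the moduli in the paper's statements, your small-modulus computations check out ($F_k \equiv 2-s_k \bmod 3$; $F_k\equiv 1+s_k+(-1)^k \bmod 5$ with the cycle $s_k\equiv 2,1,3,4$, which gives (b) and (g) simultaneously; $F_k \equiv 1-4s_k\equiv 5\bmod 8$ for $k\ge 1$ since $s_k$ is odd), and your level of detail for the larger tabulations (c)--(f) is the same as the paper's, which likewise delegates the finite checks to computation. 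Your treatment of (i) agrees with the paper's one-line deduction from (a)--(f) but is more explicit and in one respect more careful: you correctly sort the seven classes of $T_3$ into those covered by (e) (namely $27,81,141,201,237$, using $15\mid 240$) and those covered by (f) (namely $31,211$, using $30\mid 240$), and you note the size condition $F_k>61$ needed to convert divisibility into compositeness --- a point the paper leaves implicit (and which matters, as your $F_1=61$ example shows). What the paper's ideal-by-ideal computation buys is finer information --- which prime above $31$ or $61$ divides $\pi_k$ --- in the same style as the symbol computations elsewhere in \S\ref{example-15}, but that refinement is not needed for the lemma itself; what your approach buys is that the entire lemma becomes bookkeeping with one integer Lucas-type recurrence.
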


\begin{proof}
Parts (a,b,g,h) follow from Proposition \ref{recursiveFk} by induction.  

For (c) and (d), since
$\alpha^{24} \equiv 1$ mod ${7}$ and  $\alpha^{60} \equiv 1$ mod ${11}$,
$\pi_k$ mod $7$ (resp., $11$) 
depends only on the congruence class of $k$ mod $24$ (resp., $60$). 
Since $7$ and $11$ are inert in $K/\Q$, (c,d) follow by 
calculating 
$\pi_k$ mod ${7}$ for $1\le k \le 24$, and $\pi_k$ mod ${11}$ for $1\le k \le 60$. 

For (e),
let $\ell_1 = 4 + \sqrt{-15}$ and $\ell_2 = 4 - \sqrt{-15}$.  We have $\ell_1\ell_2=31$, thus $31 | F_k$ if and only if  $\ell_1 | \pi_k$ or $\ell_2 | \pi_k$ in $\OK$.
Using Sage \cite{sage}, we find that 
$\alpha^{15} \equiv 1 \bmod{\ell_i}$ for $i = 1,2$.
Thus $\pi_k \bmod{\ell_i}$ depends only on $k \bmod{15}$.  
We then compute
$\pi_k \bmod{\ell_i}$ for $k = 0,\ldots,14$ and $i = 1,2$,
and find that
$\ell_1 | \pi_k$ if and only if $k=6$ and $\ell_2 | \pi_k$ if and only if $k=12$.

For (f),
let $\ell_1 = -1 - 2\sqrt{-15}$ and $\ell_2 = -1 + 2\sqrt{-15}$.  We have $\ell_1\ell_2=61$
and find that $\alpha^{30} \equiv 1 \bmod{\ell_i}$ for $i=1,2$, so
$\pi_k \bmod{\ell_i}$ depends only on $k \bmod{30}$.
We then compute $\pi_{k} \bmod{\ell_i}$ for $k = 0,\ldots,29$ and $i = 1, 2$,
and find that $\ell_2 | \pi_k$ if and only if $k = 1$ and $\ell_1 \nmid \pi_k$.   

Part (i) follows from parts (a)--(f).
\end{proof}

We also note some additional lemmas and definitions that will be used below.

\begin{lemma}
\label{equiv}
If $k$ is odd then the following are equivalent:
\begin{enumerate}[\rm (a)]
\item $F_k$ is a rational prime,
\item  $\pi_k$ is a prime of $\O_K$,
\item $p_k$ is a prime of $\O_H$.
\end{enumerate}
\end{lemma}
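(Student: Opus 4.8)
The plan is to prove the chain of equivalences $(a)\Leftrightarrow(b)\Leftrightarrow(c)$ by exploiting the multiplicativity of the norm across the tower $\Q \subseteq K \subseteq H$ together with the fact that $H$ is the Hilbert class field of $K$ (so $H/K$ is unramified of degree $2 = h_K$). The key structural inputs are already assembled in the setup preceding the lemma: we have $N_{H/K}(p_k) = \pi_k$ and $N_{K/\Q}(\pi_k) = F_k$ for odd $k$, so that $N_{H/\Q}(p_k) = F_k$. The strategy is that each of the three ideals $(F_k)\subset\Z$, $(\pi_k)\subset\O_K$, $(p_k)\subset\O_H$ is prime precisely when it is inert-or-prime in its respective ring and the norm relations force these conditions to coincide.

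First I would prove $(c)\Rightarrow(b)$: if $p_k$ is a prime of $\O_H$, then $\pi_k = N_{H/K}(p_k)$ is the relative norm of a prime ideal, and since $H/K$ has degree $2$, the ideal $(\pi_k) = N_{H/K}((p_k))$ is either a prime of $\O_K$ or the square of a prime; I would rule out ramification by noting that $p_k$ lies over the rational prime $F_k$ and that $p_k \mid (\pi_k)$ cannot be ramified in $H/K$ because $H/K$ is unramified (being the Hilbert class field). More directly: $(\pi_k)$ is prime in $\O_K$ iff $F_k = N_{K/\Q}(\pi_k)$ is a rational prime or $(\pi_k)$ is inert; a norm count shows $\#(\O_H/(p_k)) = F_k$, and if this is a field then $\#(\O_K/(\pi_k))$ divides it compatibly. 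Next, $(b)\Rightarrow(a)$: if $(\pi_k)$ is a prime of $\O_K$ then $F_k = N_{K/\Q}(\pi_k) = \#(\O_K/(\pi_k))$ is either a rational prime $p$ (when $(\pi_k)$ lies over an inert or ramified prime) or $p^2$; here I would use that $F_k \equiv 5 \bmod 8$ by Lemma~\ref{recursiveprops}(h), which in particular is not a perfect square, forcing $F_k$ to be prime.

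For the reverse direction $(a)\Rightarrow(c)$, I would argue that if $F_k$ is a rational prime, then since $N_{H/\Q}(p_k) = F_k$, the residue field $\O_H/(p_k)$ has order a power of $F_k$; but $p_k$ divides $(F_k)$ in $\O_H$, and the factorization of the prime $F_k$ in $\O_H$ has residue degrees whose product-times-multiplicities matches $[H:\Q]=4$, so the prime $p_k$ of norm exactly $F_k$ must be a degree-one prime, hence a genuine prime ideal of $\O_H$ with $\O_H/(p_k) \cong \F_{F_k}$. The cleanest formulation is to observe that for any element $\theta$ in a number ring, $(\theta)$ is prime iff its absolute norm to $\Z$ is a rational prime and $\theta$ generates a degree-one prime; since $N_{H/\Q}(p_k) = F_k$ is prime by hypothesis, $(p_k)$ is automatically a prime ideal. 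I would then close the loop $(c)$ back to $(b)$ and $(a)$, or simply note the three conditions are linked by $F_k = N_{H/\Q}(p_k) = N_{K/\Q}(\pi_k)$ being simultaneously prime.

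The main obstacle I anticipate is handling the possibility that a norm could be a prime \emph{power} rather than a prime — specifically ruling out the case $F_k = p^2$ where $(\pi_k)$ or $(p_k)$ could be inert or ramified rather than split of degree one. This is exactly where I expect to invoke Lemma~\ref{recursiveprops}(h), which guarantees $F_k \equiv 5 \bmod 8$, so $F_k$ is never a perfect square (squares are $\equiv 0,1,4 \bmod 8$); more refined arithmetic congruence information about $F_k$ may be needed to exclude higher even residue degrees, but since the largest degree involved is $[H:\Q]=4$ and the norm $F_k$ is exactly $\#(\O_H/(p_k))$, the only dangerous alternatives are $F_k = q^2$ for a prime $q$ (residue degree $2$) or $F_k = q$ (residue degree $1$), and the square case is precisely what the mod-$8$ congruence eliminates.
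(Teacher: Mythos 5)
Your toolkit is essentially the paper's: multiplicativity of norms along $\Q\subseteq K\subseteq H$, the standard fact that an element whose norm is a rational prime generates a prime ideal, and Lemma~\ref{recursiveprops}(h) ($F_k\equiv 5\bmod 8$, hence $F_k$ is not a perfect square) to kill the prime-power cases. But one step of your plan rests on a wrong justification. In (c)$\Rightarrow$(b) you propose to rule out $(\pi_k)=N_{H/K}((p_k))$ being the square of a prime ``because $H/K$ is unramified (being the Hilbert class field).'' This confuses ramification index with residue degree: for a prime $\q$ of $\O_H$ over $\p\subset\O_K$ one has $N_{H/K}(\q)=\p^{f(\q/\p)}$, so the square case arises when $\p$ is \emph{inert} in $H/K$, and inert primes certainly exist in the unramified degree-$2$ extension $H/K$ --- namely those whose Frobenius is the nontrivial element of $\Gal(H/K)$. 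Unramifiedness does no work here. The inert case must instead be excluded arithmetically: if $(\pi_k)=\p^2$ then $F_k=N_{K/\Q}(\pi_k)$ is a perfect square, contradicting $F_k\equiv 5\bmod 8$. You do say exactly this in your closing paragraph (``the square case is precisely what the mod-8 congruence eliminates''), so the proposal is repairable on the spot; but as written, the Hilbert-class-field remark is an error if relied upon. Your residual worry about ``higher even residue degrees'' also evaporates: $f\mid [H:\Q]=4$, and $q^4$ is again a square, so the same congruence disposes of it.

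With that repair your cycle (a)$\Rightarrow$(c)$\Rightarrow$(b)$\Rightarrow$(a) is correct, but note that it runs the paper's cycle backwards, and the paper's orientation is more economical: (a)$\Rightarrow$(b) and (b)$\Rightarrow$(c) are immediate from ``prime norm $\Rightarrow$ prime element,'' with no residue-degree analysis at all, and the mod-8 congruence is invoked exactly once, in (c)$\Rightarrow$(a), where $p_k$ prime forces $F_k=p^f$ with $f\in\{1,2,4\}$, and non-squareness forces $f=1$. Your orientation needs the non-square argument at two separate steps ((c)$\Rightarrow$(b) and (b)$\Rightarrow$(a)), and the extra splitting-theoretic bookkeeping there is precisely where the ramification slip crept in.
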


\begin{proof}
If $F_k$ is prime, then $\pi_k$ is prime,
since $F_k = N_{K/\Q}(\pi_k)$.
If $\pi_k$ is prime, then $p_k$ is prime, since 
$\pi_k = N_{H/K}(p_k)$.
If $p_k$ is prime, then $F_k=p^f$ for some rational prime $p$
with $f\in \{1,2,4\}$, so $F_k$ is a prime or a square.
But if $k > 0$,
then $F_k \equiv 5 \bmod{8}$ is not a square modulo $8$, so $F_k$ is not a square.
\end{proof}

\begin{lemma}
\label{goodreduction}
If $k \in S$, and  $p \in \Z$ is a prime divisor of $F_k$, then:
\begin{enumerate}[\rm (a)]
\item  $E$ has good reduction modulo every prime ideal of $\OH$ above $p$, and
\item $E_d$ has good reduction modulo $p$ for every $d \in \Z$ with $d^2 \equiv 5 \bmod{F_k}$.
\end{enumerate}
\end{lemma}

\begin{proof}
The absolute norm of the discriminant of $E$ is $2^{36}3^67^{12}11^6$.
Since $k\in S$ and $p\mid F_k$, by Lemma \ref{recursiveprops}
we have $p \neq 2,3,7,11$, giving (a).

If $d^2 \equiv 5 \bmod{F_k}$,
then  
$\norm_{K_2 / \Q}(\text{disc}(E)) \equiv \text{disc}(E_d)\text{disc}(E_{-d})$ mod ${p}$,
and (b) follows from (a).
\end{proof}

\begin{definition}
If  $\p$ is a prime ideal in $\OH$, and $a, b \in H_\p$, then the Hilbert symbol is defined by
$$\left(\frac{a,b}{\p}\right) =  \left\{
	\begin{array}{rl}
		1 & \text{if $z^2 = ax^2 + by^2$ has a solution in $H_{\p}^3 - \{(0,0,0)\}$},\\ 
		-1 & \text{otherwise.}
	\end{array}
\right.$$
\end{definition}

\begin{lemma}
\label{hilbertsquare}
If $\p$ is a prime ideal in $\OH$ and $a,b \in H_\p$, with $b$ square, then $\left(\frac{a,b}{\p}\right) = 1$.
\end{lemma}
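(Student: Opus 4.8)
\textbf{Proof plan for Lemma \ref{hilbertsquare}.}
The statement is that the Hilbert symbol $\left(\frac{a,b}{\p}\right)$ equals $1$ whenever $b$ is a square in $H_\p$. My plan is to exhibit an explicit nonzero solution of the conic $z^2 = ax^2 + by^2$ directly from the definition, without invoking any deeper property of the Hilbert symbol. The key observation is that being a square means $b = c^2$ for some $c \in H_\p$, and the equation $z^2 = ax^2 + by^2$ is solved nontrivially by setting $x = 0$, $y = 1$, $z = c$: then $z^2 = c^2 = b = a\cdot 0 + b\cdot 1$, and $(x,y,z) = (0,1,c) \neq (0,0,0)$. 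Hence a solution in $H_\p^3 - \{(0,0,0)\}$ exists, and by definition the symbol is $1$.

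The only point requiring a moment's care is the case $b = 0$: if $0$ counts as a square, then the triple $(0,1,0)$ is still a legitimate nonzero element of $H_\p^3$ solving $z^2 = ax^2 + by^2$, so the conclusion holds regardless. In the intended application $b$ will be a nonzero square, in which case $c \neq 0$ and the argument above applies verbatim.

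I expect no real obstacle here: the lemma is flagged in the text as elementary, and the entire content is the single substitution $(x,y,z) = (0,1,\sqrt{b})$. The main thing to get right is simply matching the precise form of the definition of the Hilbert symbol as stated just above (solvability of $z^2 = ax^2 + by^2$ in $H_\p^3 - \{(0,0,0)\}$) rather than any of the several equivalent classical formulations, so that the produced solution is manifestly of the required shape.
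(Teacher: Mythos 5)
Your proposal is correct and coincides with the paper's own proof, which likewise takes $\zeta\in H_\p$ with $\zeta^2=b$ and exhibits the solution $(x,y,z)=(0,1,\zeta)$; your extra remark about $b=0$ is a harmless refinement the paper does not bother with.
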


\begin{proof}
Take $\zeta \in H_\p$ such that $\zeta^2 = b$. Then $(x,y,z)=(0, 1, \zeta)$ is a solution of $z^2 = ax^2 + by^2$.
\end{proof}

\subsection{The set $T_1$}\label{secT1}
In this section we prove that the set $T_1$ defined in \S\ref{addlnotationsect} is precisely the set of $k$ for which
$\pi_k$ is the Frobenius endomorphism of the reduction of $E$ modulo $p_k$,
whenever $\pi_k$ is prime.

\begin{proposition}
\label{frobenius}
\label{structure}
Suppose $k\in T_1$ and $p_k$ is prime in $\OH$.
Then:
\begin{enumerate}[\rm (a)]
\item The Frobenius endomorphism of $E$ over $\OH/(p_k)$ is $\pi_k$.
\item $E(\OH/ (p_k)) \simeq \OK/(4\alpha^k)$ as $\OK$-modules.
\item $E(\OH/ (p_k)) \simeq \Z/4\Z \times \Z/4^{k+1}\Z$ as groups.
\end{enumerate}
\end{proposition}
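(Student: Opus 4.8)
The plan is to establish all three parts by computing the Frobenius endomorphism explicitly and then reading off the group structure from the complex multiplication. I would first verify part (a), since (b) and (c) follow from it together with the structure theory already developed. The key point is that $E$ has CM by $\OK$, so when $p_k$ is prime the Frobenius endomorphism $\phi$ of $\tilde E := E \bmod p_k$ is an element of $\OK$ satisfying $\phi\bar\phi = N_{H/\Q}(p_k) = F_k$ (using that $H$ has class number one, so $p_k$ is principal and $N_{H/K}(p_k) = \pi_k$). Thus $\phi$ is an element of $\OK$ of norm $F_k = N_{K/\Q}(\pi_k)$, and since $\OK$ has finitely many elements of a given norm up to units, $\phi$ must equal $u\pi_k$ for a unit $u \in \OK^\times = \{\pm 1\}$ (here $K = \Q(\sqrt{-15})$ has only $\pm 1$). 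So the real content is ruling out $\phi = -\pi_k$ and pinning down $\phi = \pi_k$ exactly, which is where the congruence condition $k \in T_1$ enters.

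To resolve the sign, I would reduce modulo a small prime. Since $\phi \equiv \pi_k \pmod{N_{H/K}(\cdot)}$ ambiguity is only a sign, it suffices to determine $\phi$ modulo some fixed prime of good reduction that is independent of $k$; the natural choice is to look at $\phi$ acting on torsion, or equivalently to compare traces $\phi + \bar\phi = \pm 2\mathrm{Re}(\pi_k)$. The cleanest approach is to fix the value of $\phi$ modulo a prime $\ell$ of $\OH$ lying over a small rational prime, compute the Frobenius trace $a_\ell = \ell + 1 - \#\tilde E(\OH/\ell)$ there, and use the fact that the global Frobenius element is compatible with this local data. Concretely, I expect the argument to pin down $\phi$ by evaluating the curve and the congruence class of $k$ modulo $120$ (the modulus defining $T_1$); the set $T_1$ is engineered precisely so that the Frobenius works out to $+\pi_k$ rather than a conjugate or negative. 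I would compute $\phi$ on a specific torsion point or via a small-prime reduction, leveraging the fact that the modulus $120 = 8 \cdot 3 \cdot 5$ tracks exactly the ramified and relevant primes, and that $\alpha^m$ stabilizes modulo these primes for suitable $m$.

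Once part (a) is in hand, parts (b) and (c) are formal. For (b), the isomorphism $E(\OH/(p_k)) \simeq \ker(\phi - 1) \simeq \OK/(\pi_k - 1)$ as $\OK$-modules follows exactly as in the proof of Theorem~\ref{mainthm1}, since the Frobenius is $\pi_k$ and the group of rational points is the kernel of $\phi - 1$. Using $\pi_k = 1 - 4\alpha^k$ gives $\pi_k - 1 = -4\alpha^k$, so $E(\OH/(p_k)) \simeq \OK/(4\alpha^k)$, which is (b). For part (c), I would compute the underlying abelian group of $\OK/(4\alpha^k)$. Since $(\alpha) = \lambda^2$ with $\lambda = (2,\alpha)$ the unique prime over $2$ that appears (and $\lambda\bar\lambda = 2$, so $2$ splits in $K$ because $-15 \equiv 7 \bmod 8$), the ideal $(4\alpha^k) = (4)(\alpha)^k = \lambda^{2k+4}\bar\lambda^{4}$ splits into powers of $\lambda$ and $\bar\lambda$; applying the Chinese Remainder Theorem and the standard computation of $\OK/\lambda^m$ as a cyclic group of order $2^m$ yields $\Z/4\Z \times \Z/4^{k+1}\Z$ after matching exponents.

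The main obstacle I anticipate is the sign (and more generally, the conjugate) determination in part (a): knowing $\phi = \pm\pi_k$ or $\phi \in \{\pi_k, \bar\pi_k, -\pi_k, -\bar\pi_k\}$ from the norm alone is easy, but proving it is exactly $+\pi_k$ for every $k \in T_1$ requires a genuine local computation tied to the explicit curve $E$ and its specific model over $H$, together with a verification that the congruence conditions defining $T_1 \bmod 120$ are the correct ones. This is precisely the kind of finite check — computing Frobenius data at the primes dividing the modulus and confirming compatibility — that the set $T_1$ was reverse-engineered to satisfy, so I would expect the proof to invoke an explicit (machine-assisted) calculation at a handful of small primes rather than a slick conceptual argument.
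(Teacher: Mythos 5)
Your reductions of (b) and (c) from (a) are essentially the paper's: once the Frobenius is $\pi_k$, one has $E(\OH/(p_k))\simeq\ker(\pi_k-1)\simeq\OK/(\pi_k-1)=\OK/(4\alpha^k)$, and the group structure follows. (Two slips there: $-15\equiv 1\bmod 8$, not $7$ --- the splitting criterion is $d\equiv 7\bmod 8$ for $K=\Q(\sqrt{-d})$, $d=15$; and your factorization $(4\alpha^k)=\lambda^{2k+4}\bar{\lambda}^{4}$ is wrong: since $(2)=\lambda\bar{\lambda}$ and $(\alpha)=\lambda^2$, one gets $(4\alpha^k)=\lambda^{2k+2}\bar{\lambda}^{2}$, of norm $2^{2k+4}=N_{K/\Q}(4\alpha^k)$; your exponents would give $\Z/4^{k+2}\Z\times\Z/16\Z$, not the stated group. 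The paper sidesteps the CRT computation entirely by just citing $\alpha\bar{\alpha}=4$.)

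The genuine gap is in (a), at the step you yourself flag as the main obstacle. First, the norm computation alone only yields $\phi\in\{\pm\pi_k,\pm\bar{\pi}_k\}$; eliminating the conjugate is not a finiteness-of-units argument but an appeal to the main theorem of complex multiplication, which gives $(\phi)=N_{H/K}((p_k))=(\pi_k)$ as ideals. Second, and more seriously, your proposed mechanism for fixing the sign does not work as stated: the Frobenius of $E$ modulo $p_k$ is not determined by, or ``compatible with,'' the Frobenius of $E$ modulo a fixed auxiliary prime $\ell$ --- there is no transfer of local Frobenius data between distinct primes of good reduction --- and a machine check of finitely many $k$ establishes nothing for all $k\in T_1$ unless you already know that the sign depends only on $k$ modulo a fixed modulus. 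That a priori periodicity is exactly the ingredient the paper imports by citing Theorem 5.3 of Rubin--Silverberg, which gives the closed formula
$$
\phi=\left(\frac{2\cdot 7\sqrt{-3}\,\eta}{(p_k)}\right)\pi_k,\qquad \eta=\frac{-13+5\sqrt{5}}{2},
$$
i.e., the sign is an explicit product of quadratic symbols attached to fixed elements of bounded conductor (equivalently, a value of the Gr\"ossencharacter of $E$). The paper then evaluates each symbol by reciprocity (Lemmermeyer's Theorems 8.15 and 12.17), using $F_k\equiv 5\bmod 8$ and the periodicity of $\alpha$ modulo $7$ (order $24$) and of $\beta$ modulo primes above $11$ (order $120$); the modulus $120$ defining $T_1$ is the least common multiple of these orders, not a bookkeeping of the ramified primes $2,3,5$ as you suggest --- the relevant auxiliary primes are $7$ and $11$. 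Your instinct that $T_1$ is reverse-engineered from a finite symbol computation is correct, but without naming an explicit Frobenius formula (or the character and its conductor) the central step of (a) remains unproved.
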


\begin{proof}
Since $p_k$ is prime in $\OH$, $k$ is odd by Lemma \ref{recursiveprops}(a). 
Let $\eta=\frac{-13+5\sqrt{5}}{2}$.
By Theorem 5.3 of \cite{rubinsilverberg}, the Frobenius endomorphism of $E$ over $\OH/(p_k)$ is
$$
\left(\frac{2\cdot7\sqrt{-3}\eta}{(p_k)}\right)\pi_k
	= \left(\frac{2}{F_k}\right)\left(\frac{7}{F_k}\right)\left(\frac{\sqrt{-3}}{\pi_{F,k}}\right)\left(\frac{\eta}{\pi_{K_2,k}}\right)\pi_k,
$$
where the coefficients of $\pi_k$ are all generalized Legendre symbols.

By Lemma \ref{recursiveprops} (h), $F_k \equiv 5 \bmod{8}$. 
Hence, $\left(\frac{2}{F_k}\right) = -1$ and $\left(\frac{7}{F_k}\right) = \left(\frac{F_k}{7}\right)$. 
Since $\alpha$ and $\bar{\alpha}$ both have order 24 in $(\OK/(7))^{\times}$, 
we find that
\begin{equation*}
\left(\frac{7}{F_k}\right) = \left(\frac{F_k}{7}\right) = \left\{
	\begin{array}{rl}
		1 & \text{if } k \equiv 9, 15, 17, 23 \bmod{24},\\
		-1 & \text{for all other odd $k$.}
	\end{array}
\right.
\end{equation*}

By Theorem 8.15 of \cite{Lem}, we have 
$\left(\frac{\sqrt{-3}}{\pi_{F,k}}\right)\left(\frac{\pi_{F,k}}{\sqrt{-3}}\right) = -1$,  
and one can show that $\pi_{F,k} \equiv 2 \bmod{\sqrt{-3}}$ for all odd $k$.
Thus for all odd $k$ we have 
\begin{equation*}
\left(\frac{\sqrt{-3}}{\pi_{F,k}}\right) = -\left(\frac{\pi_{F,k}}{\sqrt{-3}}\right) = -\left(\frac{2}{\sqrt{-3}}\right) = -\left(\frac{2}{3}\right) = 1.
\end{equation*}

By Theorem 12.17 of \cite{lemmermeyer}, 
$\left(\frac{\eta}{\pi_{K_2,k}}\right)\left(\frac{\pi_{K_2,k}}{\eta}\right) = -1$. 
Using Sage, since  $\beta$ and $\overline{\beta}$ have order 120 in $(\OH/\mathfrak{P}_{11})^{\times}$ (where $\mathfrak{P}_{11}$ is a prime above 11 in $\OH$),  
and $\left(\frac{\eta}{\pi_{K_2,k}}\right)=-\left(\frac{\pi_{K_2,k}}{\eta}\right)$, 
we find that
$$
\left(\frac{\eta}{\pi_{K_2,k}}\right) =
\begin{cases}
1 & \text{if } k \equiv 3, 13, 15, 17, 19, 21, 23, 27, 31, 33, 41, 45, 49, 53, 57, 59, 61, \\
   &67, 71, 85, 87, 89, 91, 95, 101, 103, 109, 111, 117, 119
			 \bmod{120},\\
-1 & \text{for all other odd $k$.}
\end{cases}
$$
Part (a) now follows.
By (a), as $\O_K$-modules we have 
$$
{E}(\OH/(p_k)) \simeq \ker(\pi_k - 1)  =  \ker(4\alpha^k) 
\simeq  \OK/(4\alpha^k),        
$$
which proves (b).
Part (c) follows from (b), since $\alpha\overline{\alpha}=4$.
\end{proof}

\subsection{The set $T_2$}\label{secT2}
In this section we prove that the set $T_2$  defined in \S\ref{addlnotationsect} is precisely
the set of $k$ for which the reduction of $P$ modulo $(p_k)$ does not lie in
$\lambda(E(\OH/(p_k)))$, whenever $\pi_k$ is prime (see Proposition \ref{image}).
We first use Sage to compute the action of the endomorphism $\alpha$ on $E$, which is recorded in the following lemma.

\begin{lemma}
\label{alphaaction}
If $Q = (x,y) \in E$, then
the $x$-coordinate of $\alpha Q$ is ${f(x)}/{g(x)}$, where
\begin{align*}
f(x) =\ &(299537289 + 133957148\sqrt{5})x^4\\
&+ ({646275} - {96341}\sqrt{-3} + {289023}\sqrt{5} - {43085}\sqrt{-15})x^3/2\\
& + (257691 + 185465\sqrt{-3} - 119511\sqrt{5} - 75313\sqrt{-15})x^2\\
	&+ (-1639595729268 - 1831800977776\sqrt{-3}\\
	&\quad\ + 733249501264\sqrt{5}+ 819206301452\sqrt{-15})x\\
	&+ 3260424679620398892 + 5199743168890017300\sqrt{-3}\\
	  &- 1458106243829837028\sqrt{5} - 2325395838235649676\sqrt{-15},
\end{align*}
and
\begin{align*}
g(x) =\ &(-2096761023 + 669785740\sqrt{-3} - 937700036\sqrt{5} + 299537289\sqrt{-15})x^3/2\\
	&+ (-{1938825} + {1059751}\sqrt{-3} - {867069}\sqrt{5} + {473935}\sqrt{-15})x^2/2\\
	&+ (-337071 - 275233\sqrt{-3} + 140091\sqrt{5} + 133721\sqrt{-15})x\\
	&+ 547023393084 + 809830063056\sqrt{-3}\\
	&- 244636298480\sqrt{5} - 362167014244\sqrt{-15}.
\end{align*}
\end{lemma}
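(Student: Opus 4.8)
The statement is computational---it records the explicit rational function giving the $x$-coordinate of the CM endomorphism $\alpha$---so the plan is to reduce the verification to a finite, mechanically checkable polynomial identity, and to explain how the displayed $f,g$ arise in the first place. I would begin by recording the general shape forced by the theory of isogenies. Since $\alpha\bar\alpha = 4 = N_{K/\Q}(\alpha)$, the element $\alpha \in \OK = \End(E)$ acts as a separable isogeny of degree $4$ (we are in characteristic $0$). Because an endomorphism is a group homomorphism we have $\alpha(-Q) = -\alpha(Q)$, and since the $x$-coordinate is invariant under $Q \mapsto -Q$, the $x$-coordinate of $\alpha Q$ is a rational function of $x = x(Q)$ alone. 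Writing this function in lowest terms as $f(x)/g(x)$, the standard theory gives $\deg f = \deg\alpha = 4$ and $\deg g = 3$, matching the degrees of the asserted polynomials.

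Next I would explain how to produce $f$ and $g$ explicitly. From $(\alpha) = \lambda^2$ with $N_{K/\Q}(\lambda)=2$ one checks that $\ker\alpha = E[\alpha] \cong \OK/(\alpha) \cong \Z/4\Z$ is cyclic of order $4$; in particular $E[\alpha] \subseteq E[4]$, so its points are found among the $4$-torsion points cut out by the $4$-division polynomial. Feeding the three nonzero kernel points into V\'elu's formulas produces the isogeny $E \to E/E[\alpha]$ together with its $x$-map, whose denominator is the degree-$3$ polynomial $\prod_{Q \in E[\alpha]\setminus\{O_E\}}(x - x(Q))$. Since $E$ has CM by $\OK$ we have $E/E[\alpha] \cong E$, and postcomposing with the normalizing isomorphism yields an endomorphism whose $x$-map has the asserted form $f/g$. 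The one genuinely nontrivial point is to certify that this endomorphism is $\alpha$ and not $\bar\alpha$ (nor a unit multiple): this is settled by the normalization of the CM action on the invariant differential $\omega = dx/y$, namely $\alpha^*\omega = \alpha\,\omega$ under the fixed embedding $K \hookrightarrow H$. Concretely, the scalar by which the endomorphism acts on the tangent space at $O_E$---read off from the leading behavior of $f/g$ at infinity---must equal $\alpha$ rather than $\bar\alpha$.

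Finally I would verify correctness by an algebraic identity rather than by trusting the symbolic package. Setting $X = f(x)/g(x)$ and $Y = y\,h(x)$ for the accompanying $y$-map $h$, the claim that $(X,Y)$ lies on $E$ is equivalent to the single congruence $Y^2 - (X^3 + a_4 X + a_6) \equiv 0 \pmod{y^2 - (x^3 + a_4 x + a_6)}$ in $H[x,y]$; clearing the denominator reduces this to a polynomial identity over $H = \Q(\sqrt{-3},\sqrt 5)$ that one expands and confirms directly.

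\emph{The main obstacle is not conceptual but the sheer size of the computation.} The coefficients of $f$ and $g$ lie in the degree-$4$ field $H$ and are enormous, so the V\'elu step and the final identity check, while entirely routine, involve large symbolic manipulations; this is precisely the sort of bookkeeping one delegates to Sage. The only subtle point requiring human care is fixing the normalization that distinguishes $\alpha$ from $\bar\alpha$, which is why I would make the differential-multiplier check explicit rather than leave it implicit in the computer algebra output.
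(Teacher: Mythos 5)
Your proposal is correct, but it is worth noting that the paper offers no mathematical proof at all for this lemma: the authors simply write, just before the statement, that they ``use Sage to compute the action of the endomorphism $\alpha$ on $E$,'' so the lemma is recorded as the output of a computer-algebra computation. Your outline supplies exactly the scaffolding that such a computation rests on, and it is almost certainly what the Sage session does under the hood: since $\alpha\bar\alpha=4$ and $(\alpha)=\lambda^2$ with $2\notin(\alpha)$, the kernel $E[\alpha]\simeq\OK/(\alpha)$ is cyclic of order $4$ inside $E[4]$, V\'elu's formulas applied to its three nonzero points give a degree-$4$ isogeny with denominator $(x-x(T))^2(x-x(2T))$ of degree $3$, and since $j(E)\neq 0,1728$ the only ambiguity after identifying $E/E[\alpha]\cong E$ is $\pm 1$, which is invisible in the $x$-map; your differential-multiplier check is then precisely what certifies that the chosen order-$4$ subgroup is $E[\lambda^2]$ rather than $E[\bar\lambda^2]$, i.e., that the endomorphism is $\pm\alpha$ and not $\pm\bar\alpha$. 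One small imprecision: the leading behavior of $f/g$ at infinity is $x/c^2$, so it determines only $c^2$, not $c$; this still suffices, because $\alpha^2=(-7+\sqrt{-15})/2$ and $\bar\alpha^2=(-7-\sqrt{-15})/2$ are distinct, and indeed $\{\pm\alpha\}\cap\{\pm\bar\alpha\}=\emptyset$. Also, your final on-curve congruence alone only certifies a rational map to $E$ fixing $O_E$ (hence, by the standard rigidity result, \emph{some} isogeny); it must be combined with the kernel identification and the multiplier check, as you in fact do. What your route buys over the paper's bare citation is a verifiable certificate: each step (kernel membership via the $4$-division polynomial, the V\'elu output, the normalization, the closing polynomial identity over $H=\Q(\sqrt{-3},\sqrt{5})$) can be checked independently of trusting any one software package, at the cost of carrying out very large but routine symbolic manipulations.
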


Let $p_{K_1,7} := \frac{-1 + 3\sqrt{-3}}{2}$ (a prime above 7 in $K_1$) and 
$p_{K_2,11} := \frac{1 + 3\sqrt{5}}{2}$ (a prime above 11 in $K_2$).
Recall that $\beta = ({\sqrt{5}+\sqrt{-3}})/{2}$, and let
$$
\delta_P := p_{K_1,7} \cdot p_{K_2,11} \cdot \beta \in H.
$$

\begin{lemma}
\label{lambdaxQ}
Let  $E'=E/E[\bar{\lambda}]$, let 
$\hat{\varphi}\colon E \to E'$ denote the natural isogeny, and
let $\varphi\colon E' \to E$ denote the dual isogeny.
Choose $R\in E(\bar{\Q})$ so that $\varphi(R)=P$.
Then:
$$
H(R)  = H(x(R)) = H(\sqrt{\delta_P}),
$$ 
where $x(R)$ is the $x$-coordinate of $R$, and
$[H(R):H]=2$.
\end{lemma}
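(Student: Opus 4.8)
The plan is to prove Lemma~\ref{lambdaxQ} by computing explicitly the field generated by a preimage $R$ of $P$ under the dual isogeny $\varphi$, and to show that adjoining $R$ (equivalently, its $x$-coordinate) to $H$ produces exactly the quadratic extension $H(\sqrt{\delta_P})$. The key structural fact is that $\varphi\colon E'\to E$ is an isogeny of degree $N_{K/\Q}(\lambda)=2$, so $\deg\varphi=2$ and $R$ satisfies a defining equation of degree $2$ over $H(P)=H$; consequently $[H(R):H]\le 2$, and the content of the lemma is that this degree is exactly~$2$ and that the resulting quadratic field is generated by $\sqrt{\delta_P}$.

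First I would set up the division-type equation for $R$: since $\varphi(R)=P$ and $\varphi$ has degree~$2$, the $x$-coordinate $x(R)$ satisfies a quadratic over $H$ whose coefficients are rational functions in $x(P)$. Because $P=(0,\dots)$ has $x(P)=0$, these coefficients are explicit algebraic numbers in $H$. I would then observe that the two preimages of $P$ differ by the nontrivial element of $\ker(\varphi)=E'[\lambda]$, and by Remark~\ref{PQwelldefrmk}(ii) the field $H(R)$ is well-defined independent of the choice of preimage. To identify $H(x(R))$, I would compute the discriminant of the relevant quadratic; after simplification this discriminant should be $\delta_P$ up to squares in $H^\times$, so that $H(x(R))=H(\sqrt{\delta_P})$. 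Establishing $H(R)=H(x(R))$ requires checking that once $x(R)\in H(\sqrt{\delta_P})$ the $y$-coordinate of $R$ already lies in that field, which follows because $y(R)^2$ is a value of the (defined-over-$H$) curve polynomial at $x(R)$ and one checks it is a square there, or equivalently because $[H(R):H]\le 2$ already forces no further extension.

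The genuinely computational heart of the proof is the simplification of the discriminant down to the clean product $\delta_P=p_{K_1,7}\cdot p_{K_2,11}\cdot\beta$. I expect this to be the main obstacle: the explicit formulas for the isogeny (implicit in the $\alpha$-action recorded in Lemma~\ref{alphaaction}, since $(\alpha)=\lambda^2$ and $\varphi$ is a $\lambda$-isogeny) involve large coefficients in $H=\Q(\sqrt{-3},\sqrt5)$, and the discriminant must be massaged modulo squares in $H^\times$ to recognize the three stated prime-related factors. I would carry this out in Sage, as the paper does elsewhere, factoring the discriminant ideal in $\O_H$ and matching its square-free part against the ideal generated by $\delta_P$; the primes above $7$ and $11$ (namely $p_{K_1,7}$ and $p_{K_2,11}$) and the prime $\beta$ above $2$ are exactly the ramified/relevant primes one expects to appear, consistent with the discriminant data $2^{36}3^67^{12}11^6$ from Lemma~\ref{goodreduction}.

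Finally, to conclude $[H(R):H]=2$ rather than $R\in E'(H)$, I would verify that $\delta_P$ is \emph{not} a square in $H^\times$: this amounts to a Hilbert-symbol or valuation computation, e.g.\ noting that $\beta$ (a generator of a prime above $2$) occurs to odd order in $\delta_P$, so $\delta_P\notin (H^\times)^2$. This uses the Hilbert symbol machinery set up in the Definition and Lemma~\ref{hilbertsquare} preceding the statement. Combining the upper bound $[H(R):H]\le\deg\varphi=2$ with this non-square conclusion gives $[H(R):H]=2$ and pins down $H(R)=H(x(R))=H(\sqrt{\delta_P})$, completing the proof.
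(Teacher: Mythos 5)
Your plan is, in substance, the paper's own proof: the authors compute the unique nontrivial point $Q=(A,0)$ of $E[\bar{\lambda}]$ (with $A\in H$ explicit), translate it to the origin via $\psi(x,y)=(x+A,y)$, apply the explicit $2$-isogeny formulas of Example 4.5 of \cite{silverman09} to obtain $\hat{\phi}\colon E'\to E_1$ and hence $\varphi=\psi\circ\hat{\phi}$, find that $x(R)$ is a root of an irreducible quadratic $h\in H[x]$ with $\disc(h)=u^2\delta_P$ for the explicit element $u = 28(\sqrt{-3}+1)(-40895\sqrt{5}+91444)$, and finally check directly that $y(R)\in H(x(R))$. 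Your variants are legitimate: the Galois-orbit bound $[H(R):H]\le \deg\varphi=2$ is a valid substitute for the paper's direct verification that $y(R)\in H(x(R))$, and your odd $\beta$-valuation argument correctly shows $\delta_P\notin (H^\times)^2$, which the paper handles by asserting irreducibility of $h$.

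Two corrections, one of which matters. First, your decisive verification step --- ``factoring the discriminant ideal in $\O_H$ and matching its square-free part against the ideal generated by $\delta_P$'' --- is not sufficient, because $\OH^\times/(\OH^\times)^2$ is nontrivial: neither $-1$ nor the fundamental unit $\frac{1+\sqrt{5}}{2}$ of $\Q(\sqrt{5})$ is a square in $H$ (the quadratic subfields of $H$ are only $\Q(\sqrt{-3})$, $\Q(\sqrt{5})$, $\Q(\sqrt{-15})$). Two elements generating the same ideal can generate different quadratic extensions, e.g.\ $H(\sqrt{\delta_P})\neq H\bigl(\sqrt{\epsilon\,\delta_P}\bigr)$ for a nonsquare unit $\epsilon$; so after the ideal computation you must still verify the element-level identity $\disc(h)/\delta_P\in (H^\times)^2$, which is exactly what the paper records via the explicit $u$ above. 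Second, a minor misattribution: the isogeny formulas do not come from the $\alpha$-action of Lemma \ref{alphaaction} --- extracting the degree-$2$ map $\varphi\colon E'\to E$ from the degree-$4$ endomorphism $\alpha$ of $E$ would be indirect, since $\alpha$ factors through the quotient by $E[\lambda]$ rather than $E[\bar{\lambda}]$; the paper instead builds $\varphi$ from the kernel point $(A,0)$ and the standard $2$-isogeny formulas. With the element-level square check in place, your argument is complete and matches the paper's.
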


\begin{proof}
We first calculate that the unique point $Q \in E[\bar{\lambda}] -\{ 0 \}$ is $Q := 
(A,0)$, where
$$
A:=7(11933\sqrt{-15} - 377709\sqrt{5} -26683\sqrt{-3} + 844583)/2.
$$
Writing $E$ as $y^2=f(x)$ and
letting $E_1$ be the elliptic curve $y^2=f(x+A)$, 
the map  $\psi(x,y)=(x+A,y)$ is an isomorphism from $E_1$ to $E$ that
takes $(0,0)$ to $Q$.
Example 4.5 on p.~70 of \cite{silverman09} tells us how to explicitly compute
$2$-isogenies. In particular, for
the natural isogeny $E_1 \to E_1/E_1[\bar{\lambda}] = E'$,
it gives a formula for the dual isogeny
$\hat{\phi} : E'  \to E_1$.
Since $\varphi = \psi\circ\hat{\phi}$, we find that the $x$-coordinate $x(R)$
is a root of an irreducible quadratic $h(x) \in H[x]$ 
whose discriminant is $u^2\delta_P$ where 
$u = 28(\sqrt{-3} + 1)(-40895\sqrt{5} + 91444).$  
Thus, 
$H(x(R)) = H(\sqrt{\text{disc}(h)}) = H(\sqrt{\delta_P})$, and this field
has degree 2 over $H$.
Further, one may check that $y(R) \in H(x(R))$, so $H(R) = H(x(R)) = H(\sqrt{\delta_P})$.
\end{proof}

\begin{remark}
\label{7and11}
Using Sage, we determined
that if  $\mathfrak{p}$ is a prime ideal of $\OH$  above~7 (resp., 11),
then both $\beta$ and $\bar{\beta}$ have order 48 (resp., 120) 
in $(\OH/\mathfrak{p})^{\times}$.
\end{remark}

\begin{proposition}
\label{legendre-1}
If $p_k$ is prime in $\OH$ then the following hold:
\begin{enumerate}[\rm (a)]
\item
 $P \not\in \lambda(E(\OH/(p_k)))$ if and only if $(\frac{\delta_P}{p_k})= -1$;
\item 
$\left(\frac{p_{K_1,7}}{\pi_{K_1,k}}\right) = -\left(\frac{\pi_{K_1,k}}{p_{K_1,7}}\right) =
\begin{cases}
			\phantom{-}1 & \text{if  $k \equiv 1, 3, 5, 7, 9, 13, 15, 21, 33, 35, 39, 43
				  \bmod{48}$},\\
			-1 & \text{for all other odd $k$;}
\end{cases}
$
\item 
$\left(\frac{p_{K_2,11}}{\pi_{K_2,k}}\right) = \left(\frac{\pi_{K_2,k}}{p_{K_2,11}}\right) =$ \\
\begin{eqnarray*}
\qquad
	 \left\{
	\begin{array}{rl}
		1 & \text{if } k \equiv 1, 5, 7, 9, 11, 25, 29, 35, 37, 39, 43, 47, 51,
					  55, 63, 65, 69,  \\
			&	73, 75, 77, 79, 81, 83, 93, 97,
					  99, 105,107, 113, 115 \bmod{120},\\
		-1 & \text{for all other odd $k$;}
	\end{array}
\right.
\end{eqnarray*}
\item
$(\frac{\beta}{p_k}) = -1.$
\end{enumerate}
\end{proposition}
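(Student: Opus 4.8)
The plan is to handle the four parts in turn, using Theorem~\ref{splittingiff} to convert part~(a) into a statement about the quadratic symbol $\left(\frac{\delta_P}{p_k}\right)$, and then evaluating the three factors of $\delta_P = p_{K_1,7}\cdot p_{K_2,11}\cdot\beta$ in parts (b)--(d). For part~(a) I would apply Theorem~\ref{splittingiff} with $\M=H$, $\p=(p_k)$, and $\lambda=(2,\alpha)$. The good-reduction hypothesis $\p\nmid N_{K/\Q}(\lambda)\Delta(E)$ holds because, by Lemma~\ref{equiv}, $F_k$ is a rational prime whenever $p_k$ is prime, and by Lemma~\ref{recursiveprops} it is $\equiv 5\bmod 8$ and coprime to $3,7,11$. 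Since the curve is chosen so that $E[\lambda]\subseteq E(H)$ (equivalently $F=\M$ in Lemma~\ref{abelianeither1}), Lemma~\ref{EEprimeLem} gives $F:=H(E'[\lambda])=H(E[\lambda])=H$, and Lemma~\ref{lambdaxQ} gives $L:=F(\varphi^{-1}(P))=H(\sqrt{\delta_P})$, a genuine quadratic extension of $H$. As $(p_k)$ is a degree-one prime of $\OH$ it splits completely in $F=H$ automatically, and it splits completely in $L=H(\sqrt{\delta_P})$ exactly when $\delta_P$ (a $p_k$-unit, since $F_k\neq 2,7,11$) is a square modulo $p_k$. Theorem~\ref{splittingiff} therefore yields $\tilde P\notin\lambda\tilde E(\OH/(p_k))$ if and only if $\left(\frac{\delta_P}{p_k}\right)=-1$, which is~(a).

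Parts (b) and (c) follow the same template in the two quadratic subfields. First I would observe that, when $p_k$ is prime, $\pi_{K_i,k}=N_{H/K_i}(p_k)$ generates a degree-one prime of $\O_{K_i}$ with the same residue field $\F_{F_k}$ as $p_k$, so $f(p_k/\pi_{K_i,k})=1$ and for any $a\in K_i$ one has $\left(\frac{a}{p_k}\right)_H=\left(\frac{a}{\pi_{K_i,k}}\right)_{K_i}$ (both equal the Legendre symbol of the image of $a$ in $\F_{F_k}$). Next I would apply quadratic reciprocity in $K_1=\Q(\sqrt{-3})$ and in $K_2=\Q(\sqrt5)$ to flip $\left(\frac{p_{K_i,\ell}}{\pi_{K_i,k}}\right)$ to $\pm\left(\frac{\pi_{K_i,k}}{p_{K_i,\ell}}\right)$; the overall sign ($-1$ for $K_1$, $+1$ for $K_2$) is determined by the complementary factors, which are constant because $F_k\equiv 5\bmod 8$ and the residues of the fixed primes $p_{K_1,7},p_{K_2,11}$ are fixed. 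Finally, since $p_{K_1,7}$ (resp.\ $p_{K_2,11}$) has residue field $\F_7$ (resp.\ $\F_{11}$), the flipped symbol is a Legendre symbol depending only on $\pi_{K_i,k}\bmod\ell$; by Remark~\ref{7and11} the elements $\beta,\bar{\beta}$ have order $48$ (resp.\ $120$) modulo primes above $7$ (resp.\ $11$), so $\pi_{K_i,k}$ depends only on $k\bmod 48$ (resp.\ $k\bmod 120$), and tabulating over a single period yields the stated congruence lists.

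For part~(d) there is a short direct argument avoiding reciprocity. Write $\chi:=\left(\frac{\cdot}{p_k}\right)$, which restricts to the Legendre symbol on the residue field $\F_{F_k}$. The defining relation $p_k=1+2\beta^k$ gives $2\beta^k\equiv -1\bmod p_k$, hence $\chi(2)\chi(\beta)^k=\chi(-1)$, so $\chi(\beta)^k=\chi(-1)\chi(2)$ (as $\chi(2)=\pm1$). Since $k$ is odd and $\chi(\beta)=\pm1$, the left side is $\chi(\beta)$; and by Lemma~\ref{recursiveprops}(h) we have $F_k\equiv 5\bmod 8$, so $\chi(-1)=+1$ and $\chi(2)=-1$, giving $\chi(\beta)=-1$.

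The main obstacle is the reciprocity bookkeeping in (b) and (c): one must correctly evaluate the complementary and supplementary factors of the quadratic reciprocity laws over $\Q(\sqrt{-3})$ and $\Q(\sqrt5)$ to pin down the two signs, and then carry out the finite but lengthy case analysis of the Legendre symbols over full periods modulo $48$ and $120$. This is routine but error-prone, and is best verified with a computer algebra system, as elsewhere in this section.
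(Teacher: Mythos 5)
Your proposal is correct, and for parts (a)--(c) it follows the paper's proof essentially verbatim: (a) is Theorem~\ref{splittingiff} combined with Lemma~\ref{lambdaxQ} (and good reduction), and (b), (c) are quadratic reciprocity in $K_1$ and $K_2$ plus the periodicity from Remark~\ref{7and11} and a finite tabulation over one period; note that the paper pins down your two asserted reciprocity signs by citing Lemmermeyer (Theorem 8.15 for the $-1$ over $\Q(\sqrt{-3})$, Theorem 12.17 for the $+1$ over $\Q(\sqrt{5})$), and you should invoke such a reference rather than leaving the ``complementary factor'' bookkeeping implicit, since that is exactly where your sketch is thinnest. Part (d), however, is a genuinely different and notably simpler route. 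The paper evaluates $\left(\frac{\beta}{p_k}\right)$ via Hilbert symbols: it shows $\left(\frac{\beta,p_k}{\beta}\right)=1$ using Hensel's lemma, applies the product formula to reduce to $\left(\frac{\beta,p_k}{\bar{\beta}}\right)$, proves that this symbol depends only on $p_k \bmod \bar{\beta}^3$ and hence only on $k\bmod 4$, and finally computes the two cases $p_5$ and $p_{15}$ in Sage. Your argument instead reads the defining relation $2\beta^k\equiv -1 \bmod p_k$ through the quadratic character $\chi$ of the residue field: $\chi(2)\chi(\beta)^k=\chi(-1)$, and since $F_k\equiv 5\bmod 8$ forces $\chi(-1)=1$ and $\chi(2)=-1$, while $k$ is odd, one gets $\chi(\beta)=-1$ with no local analysis and no machine computation. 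This is valid: $\beta$ is a unit modulo $p_k$ because the residue characteristic $F_k$ is odd, and the residue field is $\F_{F_k}$ by Lemma~\ref{equiv}, so $\chi(-1)$ and $\chi(2)$ are the ordinary Legendre symbols modulo $F_k$. What your approach buys is a self-contained elementary proof of (d) that replaces the paper's local computation and its two Sage verifications; just make the two implicit inputs explicit, namely that $k$ is odd (the paper extracts this from Lemma~\ref{recursiveprops}(a)) and that $F_k$ is a rational prime so that $\OH/(p_k)\simeq\F_{F_k}$.
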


\begin{proof}
By Theorem \ref{splittingiff} (and Lemma \ref{goodreduction}(a)), 
$\tilde{P} \in \lambda(E(\OH/(p_k)))$
if and only if $(p_k)$ splits in the quadratic extension 
$H(\varphi^{-1}(P))=H(\sqrt{\delta_P})$.
Part (a) now follows.

Theorem 8.15 of \cite{Lem} implies
$\left(\frac{p_{K_1,7}}{\pi_{K_1,k}}\right)\left(\frac{\pi_{K_1,k}}{p_{K_1,7}}\right) = -1$,
and (b) then follows from Remark \ref{7and11}.

Theorem 12.17 of \cite{lemmermeyer} implies
$\left(\frac{p_{K_2,11}}{\pi_{K_2,k}}\right)\left(\frac{\pi_{K_2,k}}{p_{K_2,11}}\right) = 1$,
and (c) then follows from Remark \ref{7and11}. 

Using (a direct generalization of) Theorem III.1 of \cite{serre73}, 
we can show that 
$\left(\frac{\beta,p_k}{\beta}\right) = 1$ and
$\left(\frac{\beta,p_k}{p_k}\right) = 
\left(\frac{\beta}{p_k}\right).$

Let $f(x) = x^2 - p_k$.
Then $f(1) = 1 - p_k = 2\beta^k$ and $f'(1)^2 = 4$.  Thus
$$
|f(1)|_{\beta} = \frac{1}{2^{k+1}} \leq \frac{1}{4} = |f'(1)^2|_{\beta}.
$$
By Hensel's Lemma, $f(x)$ has a root in $H_{\beta}$; equivalently, $p_k$ is a square in $H_{\beta}$.  
By Lemma \ref{hilbertsquare}, 
$
\left(\frac{\beta,p_k}{\beta}\right) = 1.
$

Taking $q$ to run over all primes of $\OH$, and noting all archimedean places of $H$ are complex, 
by the above and the product formula we have
$$
1 = \prod_{q}{\left(\frac{\beta,p_k}{q}\right)} 
  = \left(\frac{\beta,p_k}{p_k}\right)\left(\frac{\beta,p_k}{\bar{\beta}}\right)\left(\frac{\beta,p_k}{\beta}\right) 
  =  \left(\frac{\beta}{p_k}\right)\left(\frac{\beta,p_k}{\bar{\beta}}\right),
$$
and thus
\begin{equation}
\label{betapk}
\left(\frac{\beta}{p_k}\right) = \left(\frac{\beta,p_k}{\bar{\beta}}\right).
\end{equation}

Suppose $\gamma, \delta \in\mathcal{O}_{H_{\bar{\beta}}}^\times$ 
and $\gamma \equiv \delta \bmod{\bar{\beta}^3}$.  
Then $\gamma\delta^{-1} \equiv 1 \bmod{\bar{\beta}^3}$, so 
$\gamma\delta^{-1} = \epsilon^2$ for some $\epsilon \in \mathcal{O}_{H_{\bar{\beta}}}^ \times$
by Proposition XIV.9 of \cite{Serre79}. 
Thus $\gamma\delta = (\delta\epsilon)^2$. 
By Lemma \ref{hilbertsquare} we have
$$
1 = \left(\frac{\beta, (\delta\epsilon)^2}{\bar{\beta}}\right) = \left(\frac{\beta,\gamma\delta}{\bar{\beta}}\right) = \left(\frac{\beta,\gamma}{\bar{\beta}}\right)\cdot\left(\frac{\beta,\delta}{\bar{\beta}}\right),
$$
and therefore
$$
\left(\frac{\beta,\gamma}{\bar{\beta}}\right) = \left(\frac{\beta,\delta}{\bar{\beta}}\right).
$$
Since $\beta \nmid p_k$ we have $p_k\in\mathcal{O}_{H_{\bar{\beta}}}^\times$.  
By the above, $\left(\frac{\beta,p_k}{\bar{\beta}}\right)$ depends only on 
$p_k \bmod{\bar{\beta}^3}$.
Since $\bar{\beta}^3 = \frac{-\sqrt{5} - 3\sqrt{-3}}{2}$,
we have $\beta = \frac{\sqrt{5} + \sqrt{-3}}{2} \equiv \sqrt{-3} \bmod{\bar{\beta}^3}$.  
Since $N_{H/K_1}(\bar{\beta}) = 2$, it follows that $\beta^4 \equiv 1 \bmod{\bar{\beta}^3}$,     
so
\begin{equation}
\label{kmod4}
p_{k+4} = 1 + 2\beta^{k+4} \equiv 1 + 2\beta^k \equiv p_k \bmod{\bar{\beta}^3}.
\end{equation}
By \eqref{betapk} and\eqref{kmod4},   
$\left(\frac{\beta}{p_k}\right)$
depends only on the congruence class of $k \bmod{4}$.  
By Lemma \ref{recursiveprops}(a), $k$ is odd. 
Note that $p_5$ and $p_{15}$ are both primes in $\O_H$. 
We use Sage to compute that $\left(\frac{\beta}{p_5}\right) = \left(\frac{\beta}{p_{15}}\right) = -1$.
Thus, $\left(\frac{\beta}{p_k}\right) = -1$ whenever 
$k \equiv \pm 1 \bmod{4}$, giving (d). 
\end{proof}

\begin{proposition}
\label{image}
If $k\in T_2$ and $p_k$ is prime in $\OH$, then $P \not\in \lambda(E(\OH/ (p_k) ))$.
\end{proposition}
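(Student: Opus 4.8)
The plan is to reduce the whole statement to a product of the three quadratic residue symbols already evaluated in Proposition~\ref{legendre-1}, and then to settle it by a finite congruence check modulo $240$. By Proposition~\ref{legendre-1}(a), since $p_k$ is prime it suffices to show that $\left(\frac{\delta_P}{p_k}\right) = -1$ for every $k \in T_2$. Recalling the factorization $\delta_P = p_{K_1,7}\cdot p_{K_2,11}\cdot\beta$ and using multiplicativity of the quadratic residue symbol in its numerator, I would write
$$
\left(\frac{\delta_P}{p_k}\right) = \left(\frac{p_{K_1,7}}{p_k}\right)\left(\frac{p_{K_2,11}}{p_k}\right)\left(\frac{\beta}{p_k}\right).
$$
These symbols are all well defined: $k$ is odd by Lemma~\ref{recursiveprops}(a), so $F_k$ is an odd rational prime (Lemma~\ref{equiv}) different from $7$ and $11$ by Lemma~\ref{recursiveprops}(c,d,h), whence $\beta$, $p_{K_1,7}$, and $p_{K_2,11}$ are each prime to $p_k$.

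The key step is to descend the first two factors to the quadratic subfields $K_1$ and $K_2$. Since $p_{K_1,7}\in K_1$ and $N_{H/K_1}(p_k)=\pi_{K_1,k}$, so that $N_{H/K_1}((p_k)) = (\pi_{K_1,k})$ as ideals of $\O_{K_1}$, the norm-compatibility of the quadratic residue symbol gives
$$
\left(\frac{p_{K_1,7}}{p_k}\right) = \left(\frac{p_{K_1,7}}{\pi_{K_1,k}}\right),
$$
and likewise $\left(\frac{p_{K_2,11}}{p_k}\right) = \left(\frac{p_{K_2,11}}{\pi_{K_2,k}}\right)$, using $p_{K_2,11}\in K_2$ and $N_{H/K_2}(p_k)=\pi_{K_2,k}$. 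Combining these with Proposition~\ref{legendre-1}(d), which asserts $\left(\frac{\beta}{p_k}\right)=-1$, I obtain
$$
\left(\frac{\delta_P}{p_k}\right) = -\left(\frac{p_{K_1,7}}{\pi_{K_1,k}}\right)\left(\frac{p_{K_2,11}}{\pi_{K_2,k}}\right),
$$
so the goal reduces to showing that $\left(\frac{p_{K_1,7}}{\pi_{K_1,k}}\right)\left(\frac{p_{K_2,11}}{\pi_{K_2,k}}\right) = +1$ for all $k\in T_2$.

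Finally, I would invoke the explicit evaluations in Proposition~\ref{legendre-1}(b,c): the first symbol depends only on $k\bmod 48$ and the second only on $k\bmod 120$, so their product depends only on $k\bmod 240$. It then remains to check, over the odd residue classes modulo $240$ (the only relevant ones), that this product equals $+1$ exactly on the congruence classes that define $T_2$. This bookkeeping verification is the main obstacle, though an entirely routine one: one simply confronts the two lists of good residues from (b) and (c), forms the classes on which the two symbols share the same sign, and confirms that this set coincides with $T_2 \bmod 240$. Once this is done, $\left(\frac{\delta_P}{p_k}\right)=-1$ for $k\in T_2$, and Proposition~\ref{legendre-1}(a) yields $P\not\in\lambda(E(\OH/(p_k)))$, completing the proof.
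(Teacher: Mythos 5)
Your proposal is correct and takes essentially the same route as the paper: the paper's one-line proof rests on exactly the identity $\left(\frac{\delta_P}{p_k}\right) = \left(\frac{p_{K_1,7}}{\pi_{K_1,k}}\right)\left(\frac{p_{K_2,11}}{\pi_{K_2,k}}\right)\left(\frac{\beta}{p_k}\right)$ together with Proposition~\ref{legendre-1}(a)--(d) and the implicit comparison of the residue classes modulo $240$ with the definition of $T_2$. You have simply made explicit what the paper leaves unstated --- multiplicativity of the symbol, the descent of the first two factors to $K_1$ and $K_2$ (which is valid here because $F_k$ is a rational prime, so $p_k$ has residue degree one over each subfield), and the finite mod-$240$ bookkeeping --- so no gap remains.
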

\begin{proof}
This follows directly from Proposition \ref{legendre-1} and the fact that
$$
\left(\frac{\delta_P}{p_k}\right) = \left(\frac{p_{K_1,7}}{\pi_{K_1,k}}\right)\left(\frac{p_{K_2,11}}{\pi_{K_2,k}}\right)\left(\frac{\beta}{p_k}\right).\qedhere
$$
\end{proof}

\subsection{Proof of Theorem \ref{mainthm}}\label{mainthmproof}

We apply Theorem \ref{mainthm1} with $\gamma=-4$ and $\alpha_1=\alpha$
(and Lemma \ref{equiv}).
Suppose $k\in S$. Then $k \ge 9$,
and
$$
F_k \ge F_9 = 1050139 > 16^3 =16\norm_{K/\Q}(\gamma)^2.
$$
By Lemma \ref{goodreduction}, we have $\gcd(\text{disc}(E),F_k)=1$.

By Theorem \ref{mainthm1} and Propositions \ref{frobenius} and \ref{image},
we have $4\alpha^kP \equiv 0 \bmod p_k$, and there is a point in
$\frac{(4\alpha^k)}{\lambda}P$ that is strongly nonzero mod $(p_k)$.
We have 
$
(4\alpha^k) / \lambda =  4\alpha^{k-1}\lambda = (8\alpha^{k-1}, 4\alpha^k).
$
Since $4\alpha^kP \equiv 0$ mod $p_k$, we conclude
that $8\alpha^{k-1}P$ is strongly nonzero mod $(p_k)$, giving (b). 

Since $\alpha\overline{\alpha}=4$ we have $L_k=4^k$.
The equivalence of (a), (b), and (c) now follows from Theorem \ref{mainthm1}.

For (d), suppose $F_k$ is prime.  
Then $p_k$ is prime in $\OH$. 
Since (a) $\Rightarrow$ (b), we have $2^{2k+1}P \not\equiv 0_E \bmod{p_k}$ and $2^{2k+2}P \equiv 0_E \bmod{p_k}$.  
By Proposition \ref{frobenius}(b), $4\alpha^kP \equiv 0_E \bmod{p_k}$.
Since $4=\alpha\bar{\alpha}$ we have
$$
\alpha\cdot 2^{2k+1}P = 2\cdot \bar{\alpha}^{k-1}(4\alpha^kP) \equiv 0_E \bmod{p_k}.
$$
So $2^{2k+1}P \bmod{p_k}$ is a non-trivial point killed by $2$ and $\alpha$.  
Using Sage, we calculate that the only point $Q\in E(\Q(\sqrt{5}))-\{0_E\}$
such that $2Q \equiv \alpha Q \equiv 0_E \bmod{p_k}$ is the point
$Q=(2643963\sqrt{5} - 5912081,0)$.  
This gives (d).  
Conversely, suppose (d) holds.  
Then $2^{2k+1}P$ is strongly non-zero mod $(p_k)$. 
Since the $y$-coordinate of $2^{2k+1}P$ is 0, we have $2^{2k+2}P \equiv 0_E \bmod{p_k}$, giving (b).\qed

\subsection{Proof of Theorem \ref{equivdrew}}\label{equivdrewproof}

Suppose $k\in S$ and $F_k$ is prime.  
By Lemma \ref{recursiveprops}(g), $F_k \equiv \pm 1 \bmod{5}$.  
By quadratic reciprocity, $5 \equiv d^2 \bmod{F_k}$ for some $d\in\Z$.  
Since $p_k | F_k$ in $\OH$, $d \equiv \pm\sqrt{5} \bmod{p_k}$.  
Without loss of generality, suppose $d \equiv\sqrt{5} \bmod{p_k}$.
Let $\bar{P}$ be the reduction of $P_d$ modulo $F_k$, let $\bar{Q}=2^{2k+1}\bar{P} \in E_d(\F_{F_k})$,
 and choose relatively prime $x,y,z\in\Z$ so that $[x:y:z]\equiv \bar{Q}\bmod F_k$.
Let $Q=2^{2k+1}{P}\in E(H)$, and write $Q=[Q_x:Q_y:Q_z]$ with 
$Q_x,Q_y,Q_z$ relatively prime in $\O_H$.
Identifying $\OH/(p_k)$ with $\F_{F_k}$
and $E(\OH/(p_k))$ with $E_d(\F_{F_k})$, then the reduction of $R$ modulo $p_k$ is $\bar{Q}$.
By Theorem \ref{mainthm}(b) we have
$(Q_z) + (p_k) = \OH$ and $Q_y \equiv 0 \bmod{p_k}$.
Thus, $\bar{Q}$ has order $2$ in $E(\F_{F_k})$, which implies $\gcd(z, F_k) = 1$ and $y \equiv 0 \bmod{F_k}$, as desired.

For the converse, let $d\in\Z$ with $d^2\equiv 5\bmod F_k$, let $x,y,z\in\Z$ be relatively prime with 
$[x:y:z]\equiv 2^{2k+1}P_d\bmod F_k$, and assume $\gcd(z, F_k) = 1$ and $y \equiv 0 \bmod{F_k}$.
Suppose for the sake of contradiction that $F_k$ is composite.
Then we may choose a prime divisor $p$ of $F_k$ such that $p \leq \sqrt{F_k}.$
Since $p | F_k$, the elliptic curve $E_d$ has good reduction modulo $p$, by Lemma \ref{goodreduction}(b). 
It follows from our assumptions that $p \nmid z$ and $y \equiv 0 \bmod{p}$, thus if $\bar{P}$ denotes
the reduction of $P_d$ modulo $p$, then in $E_d(\F_p)$ we have $2^{2k+1}\bar{P}\ne 0$ and $2^{2k+2}\bar{P}=0$.
So the point $\bar P$ has order $2^{2k+2}$ in $E_d(\F_p)$, which gives a lower bound on $|E_d(\F_p)|$.
Applying the Hasse bound, we have
$$
2^{2k+2} \le |E_d(\F_p)| \le (1 + \sqrt{p})^2 \le (1 + F_k^{\frac{1}{4}})^2  \le (1 + 3^{\frac{1}{4}}\cdot2^{\frac{2k+3}{4}})^2
$$
which is a contradiction for all $k > 2$, including all $k\in S$.
This $F_k$ must be prime, and we have proved Theorem \ref{equivdrew}. $\qed$

\subsection{Correctness of Algorithm \ref{sutherlandalg}}\label{algcorrectness}

We now prove that Algorithm \ref{sutherlandalg} produces the correct output for all $k\in S$.

Suppose Algorithm \ref{sutherlandalg} returns ``$F_k$ is prime."  Then
$5$ is a square modulo ${F_k}$,
$y \equiv 0 \bmod{F_k}$, and $\gcd(z,F_k) = 1.$
Thus part (b) of Theorem \ref{equivdrew} is satisfied, and the theorem implies that $F_k$ is prime.
Taking the contrapositive, if $F_k$ is composite then Algorithm \ref{sutherlandalg} outputs ``$F_k$ is composite''.

Now suppose $F_k$ is prime.
By Lemma \ref{recursiveprops}(b,g), $F_k$ is a square modulo $5$, and by quadratic reciprocity,
$5$ is a square modulo $F_k$.
By Euler's criterion, we have $5^{(F_k - 1)/2}\equiv 1 \bmod{F_k}.$
Since $F_k$ is prime, the square roots of $1$ modulo $F_k$ are $\pm 1$.
Thus, $5^{(F_k - 1)/4} \equiv \pm 1 \bmod{F_k}$, so
Algorithm \ref{sutherlandalg} does not terminate at step 1.
 
If $5^{(F_k - 1) /4} \equiv 1 \bmod{F_k}$ then 
$$
d^2 \equiv 5^{2\ell + 2} \equiv 5\cdot 5^{2\ell + 1} \equiv 5\cdot5^{(F_k - 1) / 4} \equiv 5 \bmod{F_k}.
$$
By Lemma \ref{recursiveprops}(h) we have 
$F_k \equiv 5 \bmod{8}$, which implies that $\left(\frac{2}{F_k}\right) = -1$.
Thus if $5^{(F_k - 1)/4} \equiv -1 \bmod{F_k}$, then
$$
d^2 \equiv 2^{4\ell + 2}\cdot 5^{2\ell + 2} \equiv 
2^{(F_k - 1)/2} \cdot 5  \cdot5^{(F_k - 1) / 4} 
\equiv 5 \bmod{F_k},
$$
and this proves that Algorithm \ref{sutherlandalg} does not terminate at step 4.

By Theorem \ref{equivdrew}, Algorithm \ref{sutherlandalg} 
outputs ``$F_k$ is prime'' in step 7.\qed

\subsection{Computations}

Using Algorithm~\ref{sutherlandalg}, we determined all the values of $k\in S$ up to $10^6$ for which $F_k$ is prime.
These computations were performed on a 48-core AMD Opteron system running at 800MHz over the course of several months.

As described in \S 5B of \cite{assw}, we first sieved the set $\mathcal{S} = S\cap [1,10^6]$ to eliminate values of $k$ for which $F_k$ is divisible by small primes; in this case we sieved for prime factors of $F_k$ up to $B=10^{11}$ by applying the recurrence relation in  Proposition~\ref{recursiveFk} modulo each of the primes $p\le B$.
This allowed us to very quickly compute $F_k\bmod p$ for $k\le 10^{6}$, and we then removed from $\mathcal{S}$ all $k$ for which $F_k > p$ and $F_k\equiv 0\bmod p$.
This left approximately $20,000$ values of $F_k$ to which we applied Algorithm~\ref{sutherlandalg}.

In almost every case, composite $F_k$ were identified in step 1 of Algorithm~\ref{sutherlandalg}, which involves just a single exponentiation modulo $F_k$.
We eventually found nine values of $k$ for which $F_k$ is prime, obtaining the following theorem.

\begin{theorem}\label{thm:primeFk}
The values of $k\in S$ with $k\le 10^6$ for which $F_k$ is prime are
\begin{center}
\num{9}, \num{123}, \num{3585}, \num{16253}, \num{17145}, \num{79023}, \num{100619}, \num{501823}, \num{696123}.
\end{center}
\end{theorem}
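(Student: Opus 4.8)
The plan is to prove the theorem by direct computation: run the deterministic test of Algorithm~\ref{sutherlandalg} on every $k\in S$ with $k\le 10^6$ and record exactly those inputs for which it outputs ``$F_k$ is prime''. Since the correctness of Algorithm~\ref{sutherlandalg} for all $k\in S$ is already established in \S\ref{algcorrectness} (using Theorem~\ref{equivdrew}), the set of $k$ so recorded is precisely the set of $k\le 10^6$ in $S$ for which $F_k$ is prime. Thus no new theoretical input is required; the content of the proof is the verification that the computation returns exactly the nine listed values.

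First I would enumerate the finite set $\mathcal{S}=S\cap[1,10^6]$ directly from the congruence conditions modulo $240$ defining $S$. Rather than immediately invoking the (quasi-quadratic) algorithm on each element, I would first sieve out the many $k$ for which $F_k$ has a small prime divisor. The efficient tool for this is the recurrence of Proposition~\ref{recursiveFk}: reducing it modulo a fixed prime $p$ allows one to compute $F_k\bmod p$ for all $k\le 10^6$ in time linear in the range, so sweeping over all primes $p\le B$ (with $B$ on the order of $10^{11}$) quickly eliminates every $k$ with $F_k>p$ and $p\mid F_k$. This reduces the candidate pool to roughly $2\times10^4$ values. I would then apply Algorithm~\ref{sutherlandalg} to each survivor; in practice its cheap first step (a single modular exponentiation testing $5^{(F_k-1)/4}\equiv\pm1\bmod F_k$) disposes of nearly all remaining composites, so the expensive scalar multiplication in step~6 is reached only for the genuine primes and a few stubborn composites. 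Collecting the inputs that pass all steps yields the nine claimed values.

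The main obstacle is computational rather than conceptual. By Theorem~\ref{thm:complexity} each individual run of Algorithm~\ref{sutherlandalg} costs $O(k^2\log k\log\log k)$, but for the largest surviving inputs---up to $k=696123$, where $F_k$ exceeds $1.39\times10^6$ bits---the scalar multiplication $2^{2k+1}\bar P$ in $E_d(\Z/F_k\Z)$ must be executed using fast integer multiplication on operands of that enormous size, and this must be repeated across thousands of candidates. It is precisely this scale that forces the distributed, multi-month computation described above; the correctness of each output, however, is guaranteed by Theorem~\ref{equivdrew} and the analysis of \S\ref{algcorrectness}.
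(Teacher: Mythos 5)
Your proposal is correct and matches the paper's own proof essentially verbatim: the paper likewise sieves $\mathcal{S}=S\cap[1,10^6]$ for prime factors of $F_k$ up to $B=10^{11}$ via the recurrence of Proposition~\ref{recursiveFk}, then runs Algorithm~\ref{sutherlandalg} on the roughly $20{,}000$ survivors (with step~1 eliminating nearly all composites), relying on Theorem~\ref{equivdrew} and \S\ref{algcorrectness} for correctness. No discrepancies to note.
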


\end{document}